\documentclass[pdflatex,sn-mathphys-num]{sn-jnl}
\usepackage{graphicx}%
\usepackage{amsmath,amssymb,amsfonts}%
\usepackage{amsthm}%
\usepackage{mathrsfs}%
\usepackage{stmaryrd}
\usepackage{prftree}
\usepackage{comment}
\usepackage{url}
\usepackage{xcolor}
\definecolor{coqgrayblue}{RGB}{102,123,142}
\ExplSyntaxOn
\prop_new:N \g_hyperlogic_coq_urls_prop
\prop_gput:Nnn \g_hyperlogic_coq_urls_prop {AdjointEpsilonFullFibration} {https://github.com/mozow01/HyperLogic/blob/8d86e202886016097db383f932bc00a7f22c1cc0/theories/Semantics/EpsilonFull/EpsilonFull_Fibration.v\c_hash_str L112}
\prop_gput:Nnn \g_hyperlogic_coq_urls_prop {AllPredicateEpsilonFullModel} {https://github.com/mozow01/HyperLogic/blob/8d86e202886016097db383f932bc00a7f22c1cc0/theories/Semantics/EpsilonFull/EpsilonFull_AllPredicates.v\c_hash_str L152}
\prop_gput:Nnn \g_hyperlogic_coq_urls_prop {BaseEpsilonFullModel} {https://github.com/mozow01/HyperLogic/blob/8d86e202886016097db383f932bc00a7f22c1cc0/theories/Semantics/EpsilonFull/EpsilonFull_Fibration.v\c_hash_str L27}
\prop_gput:Nnn \g_hyperlogic_coq_urls_prop {Base_EpsilonFull_Completeness.Base_CartesianCategory} {https://github.com/mozow01/HyperLogic/blob/8d86e202886016097db383f932bc00a7f22c1cc0/theories/Semantics/EpsilonFull/Base_EpsilonFull_Completeness.v\c_hash_str L46}
\prop_gput:Nnn \g_hyperlogic_coq_urls_prop {Base_EpsilonFull_Completeness.base_completeness_equational} {https://github.com/mozow01/HyperLogic/blob/8d86e202886016097db383f932bc00a7f22c1cc0/theories/Semantics/EpsilonFull/Base_EpsilonFull_Completeness.v\c_hash_str L217}
\prop_gput:Nnn \g_hyperlogic_coq_urls_prop {Base_EpsilonFull_Completeness.hom_base} {https://github.com/mozow01/HyperLogic/blob/8d86e202886016097db383f932bc00a7f22c1cc0/theories/Semantics/EpsilonFull/Base_EpsilonFull_Completeness.v\c_hash_str L14}
\prop_gput:Nnn \g_hyperlogic_coq_urls_prop {Base_EpsilonFull_Completeness.interp_tm_term} {https://github.com/mozow01/HyperLogic/blob/8d86e202886016097db383f932bc00a7f22c1cc0/theories/Semantics/EpsilonFull/Base_EpsilonFull_Completeness.v\c_hash_str L110}
\prop_gput:Nnn \g_hyperlogic_coq_urls_prop {Base_EpsilonFull_Soundness.base_soundness_equational} {https://github.com/mozow01/HyperLogic/blob/8d86e202886016097db383f932bc00a7f22c1cc0/theories/Semantics/EpsilonFull/Base_EpsilonFull_Soundness.v\c_hash_str L25}
\prop_gput:Nnn \g_hyperlogic_coq_urls_prop {Base_SyntacticEpsilonFullModel} {https://github.com/mozow01/HyperLogic/blob/8d86e202886016097db383f932bc00a7f22c1cc0/theories/Semantics/EpsilonFull/Base_EpsilonFull_Completeness.v\c_hash_str L83}
\prop_gput:Nnn \g_hyperlogic_coq_urls_prop {BiCCCFunctor_equiv} {https://github.com/mozow01/HyperLogic/blob/8d86e202886016097db383f932bc00a7f22c1cc0/theories/Cat/Fibration.v\c_hash_str L159}
\prop_gput:Nnn \g_hyperlogic_coq_urls_prop {CanonicalEpsilon.EpsilonLabel} {https://github.com/mozow01/HyperLogic/blob/8d86e202886016097db383f932bc00a7f22c1cc0/theories/Logic/Hyper/Epsilon/Canonical_Skolem.v\c_hash_str L16}
\prop_gput:Nnn \g_hyperlogic_coq_urls_prop {CanonicalEpsilonComparison.EpsilonFullSign} {https://github.com/mozow01/HyperLogic/blob/8d86e202886016097db383f932bc00a7f22c1cc0/theories/Logic/Metatheory/Canonical_Epsilon_Comparison.v\c_hash_str L23}
\prop_gput:Nnn \g_hyperlogic_coq_urls_prop {CanonicalEpsilonComparison.EpsilonSign} {https://github.com/mozow01/HyperLogic/blob/8d86e202886016097db383f932bc00a7f22c1cc0/theories/Logic/Metatheory/Canonical_Epsilon_Comparison.v\c_hash_str L22}
\prop_gput:Nnn \g_hyperlogic_coq_urls_prop {CanonicalEpsilonFull.EpsilonLabel} {https://github.com/mozow01/HyperLogic/blob/8d86e202886016097db383f932bc00a7f22c1cc0/theories/Logic/Hyper/EpsilonFull/Canonical_Skolem.v\c_hash_str L16}
\prop_gput:Nnn \g_hyperlogic_coq_urls_prop {CanonicalEpsilonFull.dependency_well_founded} {https://github.com/mozow01/HyperLogic/blob/8d86e202886016097db383f932bc00a7f22c1cc0/theories/Logic/Hyper/EpsilonFull/Canonical_Skolem.v\c_hash_str L238}
\prop_gput:Nnn \g_hyperlogic_coq_urls_prop {CanonicalEpsilonFull.epsilon_rank} {https://github.com/mozow01/HyperLogic/blob/8d86e202886016097db383f932bc00a7f22c1cc0/theories/Logic/Hyper/EpsilonFull/Canonical_Skolem.v\c_hash_str L84}
\prop_gput:Nnn \g_hyperlogic_coq_urls_prop {CanonicalEpsilonFull.immediate_dependency} {https://github.com/mozow01/HyperLogic/blob/8d86e202886016097db383f932bc00a7f22c1cc0/theories/Logic/Hyper/EpsilonFull/Canonical_Skolem.v\c_hash_str L176}
\prop_gput:Nnn \g_hyperlogic_coq_urls_prop {CanonicalEpsilonFull.immediate_dependency_rank} {https://github.com/mozow01/HyperLogic/blob/8d86e202886016097db383f932bc00a7f22c1cc0/theories/Logic/Hyper/EpsilonFull/Canonical_Skolem.v\c_hash_str L192}
\prop_gput:Nnn \g_hyperlogic_coq_urls_prop {CanonicalEpsilonFullSemantics.completeness} {https://github.com/mozow01/HyperLogic/blob/8d86e202886016097db383f932bc00a7f22c1cc0/theories/Semantics/EpsilonFull/Canonical_EpsilonFull.v\c_hash_str L53}
\prop_gput:Nnn \g_hyperlogic_coq_urls_prop {CanonicalEpsilonFullSemantics.soundness} {https://github.com/mozow01/HyperLogic/blob/8d86e202886016097db383f932bc00a7f22c1cc0/theories/Semantics/EpsilonFull/Canonical_EpsilonFull.v\c_hash_str L49}
\prop_gput:Nnn \g_hyperlogic_coq_urls_prop {CanonicalEpsilonSemantics.completeness} {https://github.com/mozow01/HyperLogic/blob/8d86e202886016097db383f932bc00a7f22c1cc0/theories/Semantics/Epsilon/Canonical_Epsilon.v\c_hash_str L33}
\prop_gput:Nnn \g_hyperlogic_coq_urls_prop {CanonicalEpsilonSemantics.soundness} {https://github.com/mozow01/HyperLogic/blob/8d86e202886016097db383f932bc00a7f22c1cc0/theories/Semantics/Epsilon/Canonical_Epsilon.v\c_hash_str L29}
\prop_gput:Nnn \g_hyperlogic_coq_urls_prop {DerivationExtraction.extract_concrete} {https://github.com/mozow01/HyperLogic/blob/8d86e202886016097db383f932bc00a7f22c1cc0/theories/Logic/Metatheory/DerivationExtraction.v\c_hash_str L19}
\prop_gput:Nnn \g_hyperlogic_coq_urls_prop {DualDrinker_Syntax.Dual_Drinker} {https://github.com/mozow01/HyperLogic/blob/8d86e202886016097db383f932bc00a7f22c1cc0/theories/Logic/Hyper/EpsilonFull/DualDrinker_Syntax.v\c_hash_str L17}
\prop_gput:Nnn \g_hyperlogic_coq_urls_prop {Epsilon.Hyper_Syntax.FE_Eps_Functorial} {https://github.com/mozow01/HyperLogic/blob/8d86e202886016097db383f932bc00a7f22c1cc0/theories/Logic/Hyper/Epsilon/Hyper_Syntax.v\c_hash_str L139}
\prop_gput:Nnn \g_hyperlogic_coq_urls_prop {Epsilon.Hyper_Syntax.h_derivation} {https://github.com/mozow01/HyperLogic/blob/8d86e202886016097db383f932bc00a7f22c1cc0/theories/Logic/Hyper/Epsilon/Hyper_Syntax.v\c_hash_str L251}
\prop_gput:Nnn \g_hyperlogic_coq_urls_prop {Epsilon.Hyper_Syntax.h_ty} {https://github.com/mozow01/HyperLogic/blob/8d86e202886016097db383f932bc00a7f22c1cc0/theories/Logic/Hyper/Epsilon/Hyper_Syntax.v\c_hash_str L14}
\prop_gput:Nnn \g_hyperlogic_coq_urls_prop {EpsilonAsSigmaFibration} {https://github.com/mozow01/HyperLogic/blob/8d86e202886016097db383f932bc00a7f22c1cc0/theories/Semantics/EpsilonFull/EpsilonFull_Fibration.v\c_hash_str L166}
\prop_gput:Nnn \g_hyperlogic_coq_urls_prop {EpsilonFull.Base_Syntax.BT_Const} {https://github.com/mozow01/HyperLogic/blob/8d86e202886016097db383f932bc00a7f22c1cc0/theories/Logic/Base/EpsilonFull/Base_Syntax.v\c_hash_str L46}
\prop_gput:Nnn \g_hyperlogic_coq_urls_prop {EpsilonFull.Base_Syntax.FunctionSignature} {https://github.com/mozow01/HyperLogic/blob/8d86e202886016097db383f932bc00a7f22c1cc0/theories/Logic/Base/EpsilonFull/Base_Syntax.v\c_hash_str L15}
\prop_gput:Nnn \g_hyperlogic_coq_urls_prop {EpsilonFull.Base_Syntax.b_tm} {https://github.com/mozow01/HyperLogic/blob/8d86e202886016097db383f932bc00a7f22c1cc0/theories/Logic/Base/EpsilonFull/Base_Syntax.v\c_hash_str L20}
\prop_gput:Nnn \g_hyperlogic_coq_urls_prop {EpsilonFull.Base_Syntax.b_ty} {https://github.com/mozow01/HyperLogic/blob/8d86e202886016097db383f932bc00a7f22c1cc0/theories/Logic/Base/EpsilonFull/Base_Syntax.v\c_hash_str L8}
\prop_gput:Nnn \g_hyperlogic_coq_urls_prop {EpsilonFull.Base_Syntax.base_equiv} {https://github.com/mozow01/HyperLogic/blob/8d86e202886016097db383f932bc00a7f22c1cc0/theories/Logic/Base/EpsilonFull/Base_Syntax.v\c_hash_str L58}
\prop_gput:Nnn \g_hyperlogic_coq_urls_prop {EpsilonFull.Base_Syntax.base_typing} {https://github.com/mozow01/HyperLogic/blob/8d86e202886016097db383f932bc00a7f22c1cc0/theories/Logic/Base/EpsilonFull/Base_Syntax.v\c_hash_str L32}
\prop_gput:Nnn \g_hyperlogic_coq_urls_prop {EpsilonFull.DualDrinker_Syntax.FE_Pullback_Ext_Fst_Snd} {https://github.com/mozow01/HyperLogic/blob/8d86e202886016097db383f932bc00a7f22c1cc0/theories/Logic/Hyper/EpsilonFull/DualDrinker_Syntax.v\c_hash_str L138}
\prop_gput:Nnn \g_hyperlogic_coq_urls_prop {EpsilonFull.DualDrinker_Syntax.diagonal_base_equation} {https://github.com/mozow01/HyperLogic/blob/8d86e202886016097db383f932bc00a7f22c1cc0/theories/Logic/Hyper/EpsilonFull/DualDrinker_Syntax.v\c_hash_str L158}
\prop_gput:Nnn \g_hyperlogic_coq_urls_prop {EpsilonFull.Examples.conversion_along_equal_witnesses} {https://github.com/mozow01/HyperLogic/blob/8d86e202886016097db383f932bc00a7f22c1cc0/theories/Logic/Hyper/EpsilonFull/Examples.v\c_hash_str L162}
\prop_gput:Nnn \g_hyperlogic_coq_urls_prop {EpsilonFull.Examples.existence_to_selected_typing} {https://github.com/mozow01/HyperLogic/blob/8d86e202886016097db383f932bc00a7f22c1cc0/theories/Logic/Hyper/EpsilonFull/Examples.v\c_hash_str L233}
\prop_gput:Nnn \g_hyperlogic_coq_urls_prop {EpsilonFull.Examples.existential_prenex_typing} {https://github.com/mozow01/HyperLogic/blob/8d86e202886016097db383f932bc00a7f22c1cc0/theories/Logic/Hyper/EpsilonFull/Examples.v\c_hash_str L283}
\prop_gput:Nnn \g_hyperlogic_coq_urls_prop {EpsilonFull.Examples.free_bound_projection_equation} {https://github.com/mozow01/HyperLogic/blob/8d86e202886016097db383f932bc00a7f22c1cc0/theories/Logic/Hyper/EpsilonFull/Examples.v\c_hash_str L151}
\prop_gput:Nnn \g_hyperlogic_coq_urls_prop {EpsilonFull.Examples.nested_epsilon_term_typing} {https://github.com/mozow01/HyperLogic/blob/8d86e202886016097db383f932bc00a7f22c1cc0/theories/Logic/Hyper/EpsilonFull/Examples.v\c_hash_str L78}
\prop_gput:Nnn \g_hyperlogic_coq_urls_prop {EpsilonFull.Examples.persons_under_binder_wellformed} {https://github.com/mozow01/HyperLogic/blob/8d86e202886016097db383f932bc00a7f22c1cc0/theories/Logic/Hyper/EpsilonFull/Examples.v\c_hash_str L115}
\prop_gput:Nnn \g_hyperlogic_coq_urls_prop {EpsilonFull.Examples.projection_epsilon_beck_chevalley} {https://github.com/mozow01/HyperLogic/blob/8d86e202886016097db383f932bc00a7f22c1cc0/theories/Logic/Hyper/EpsilonFull/Examples.v\c_hash_str L180}
\prop_gput:Nnn \g_hyperlogic_coq_urls_prop {EpsilonFull.Examples.represented_selection_from_existence} {https://github.com/mozow01/HyperLogic/blob/8d86e202886016097db383f932bc00a7f22c1cc0/theories/Logic/Hyper/EpsilonFull/Examples.v\c_hash_str L266}
\prop_gput:Nnn \g_hyperlogic_coq_urls_prop {EpsilonFull.Examples.represented_selection_witness_typing} {https://github.com/mozow01/HyperLogic/blob/8d86e202886016097db383f932bc00a7f22c1cc0/theories/Logic/Hyper/EpsilonFull/Examples.v\c_hash_str L263}
\prop_gput:Nnn \g_hyperlogic_coq_urls_prop {EpsilonFull.Examples.second_label_rank} {https://github.com/mozow01/HyperLogic/blob/8d86e202886016097db383f932bc00a7f22c1cc0/theories/Logic/Hyper/EpsilonFull/Examples.v\c_hash_str L25}
\prop_gput:Nnn \g_hyperlogic_coq_urls_prop {EpsilonFull.Examples.selected_to_existence_typing} {https://github.com/mozow01/HyperLogic/blob/8d86e202886016097db383f932bc00a7f22c1cc0/theories/Logic/Hyper/EpsilonFull/Examples.v\c_hash_str L215}
\prop_gput:Nnn \g_hyperlogic_coq_urls_prop {EpsilonFull.Hyper_Syntax.All_WellFormed} {https://github.com/mozow01/HyperLogic/blob/8d86e202886016097db383f932bc00a7f22c1cc0/theories/Logic/Hyper/EpsilonFull/Hyper_Syntax.v\c_hash_str L131}
\prop_gput:Nnn \g_hyperlogic_coq_urls_prop {EpsilonFull.Hyper_Syntax.FE_Eps_Adjoint_Iso} {https://github.com/mozow01/HyperLogic/blob/8d86e202886016097db383f932bc00a7f22c1cc0/theories/Logic/Hyper/EpsilonFull/Hyper_Syntax.v\c_hash_str L108}
\prop_gput:Nnn \g_hyperlogic_coq_urls_prop {EpsilonFull.Hyper_Syntax.FE_Eps_BC} {https://github.com/mozow01/HyperLogic/blob/8d86e202886016097db383f932bc00a7f22c1cc0/theories/Logic/Hyper/EpsilonFull/Hyper_Syntax.v\c_hash_str L124}
\prop_gput:Nnn \g_hyperlogic_coq_urls_prop {EpsilonFull.Hyper_Syntax.PredicateSignature} {https://github.com/mozow01/HyperLogic/blob/8d86e202886016097db383f932bc00a7f22c1cc0/theories/Logic/Hyper/EpsilonFull/Hyper_Syntax.v\c_hash_str L9}
\prop_gput:Nnn \g_hyperlogic_coq_urls_prop {EpsilonFull.Hyper_Syntax.WellFormed} {https://github.com/mozow01/HyperLogic/blob/8d86e202886016097db383f932bc00a7f22c1cc0/theories/Logic/Hyper/EpsilonFull/Hyper_Syntax.v\c_hash_str L81}
\prop_gput:Nnn \g_hyperlogic_coq_urls_prop {EpsilonFull.Hyper_Syntax.b_subst_ext} {https://github.com/mozow01/HyperLogic/blob/8d86e202886016097db383f932bc00a7f22c1cc0/theories/Logic/Hyper/EpsilonFull/Hyper_Syntax.v\c_hash_str L55}
\prop_gput:Nnn \g_hyperlogic_coq_urls_prop {EpsilonFull.Hyper_Syntax.ctx_equiv} {https://github.com/mozow01/HyperLogic/blob/8d86e202886016097db383f932bc00a7f22c1cc0/theories/Logic/Hyper/EpsilonFull/Hyper_Syntax.v\c_hash_str L137}
\prop_gput:Nnn \g_hyperlogic_coq_urls_prop {EpsilonFull.Hyper_Syntax.ctx_subst} {https://github.com/mozow01/HyperLogic/blob/8d86e202886016097db383f932bc00a7f22c1cc0/theories/Logic/Hyper/EpsilonFull/Hyper_Syntax.v\c_hash_str L71}
\prop_gput:Nnn \g_hyperlogic_coq_urls_prop {EpsilonFull.Hyper_Syntax.form_equiv} {https://github.com/mozow01/HyperLogic/blob/8d86e202886016097db383f932bc00a7f22c1cc0/theories/Logic/Hyper/EpsilonFull/Hyper_Syntax.v\c_hash_str L93}
\prop_gput:Nnn \g_hyperlogic_coq_urls_prop {EpsilonFull.Hyper_Syntax.form_subst} {https://github.com/mozow01/HyperLogic/blob/8d86e202886016097db383f932bc00a7f22c1cc0/theories/Logic/Hyper/EpsilonFull/Hyper_Syntax.v\c_hash_str L58}
\prop_gput:Nnn \g_hyperlogic_coq_urls_prop {EpsilonFull.Hyper_Syntax.h_context} {https://github.com/mozow01/HyperLogic/blob/8d86e202886016097db383f932bc00a7f22c1cc0/theories/Logic/Hyper/EpsilonFull/Hyper_Syntax.v\c_hash_str L24}
\prop_gput:Nnn \g_hyperlogic_coq_urls_prop {EpsilonFull.Hyper_Syntax.h_derivation} {https://github.com/mozow01/HyperLogic/blob/8d86e202886016097db383f932bc00a7f22c1cc0/theories/Logic/Hyper/EpsilonFull/Hyper_Syntax.v\c_hash_str L222}
\prop_gput:Nnn \g_hyperlogic_coq_urls_prop {EpsilonFull.Hyper_Syntax.h_tm} {https://github.com/mozow01/HyperLogic/blob/8d86e202886016097db383f932bc00a7f22c1cc0/theories/Logic/Hyper/EpsilonFull/Hyper_Syntax.v\c_hash_str L35}
\prop_gput:Nnn \g_hyperlogic_coq_urls_prop {EpsilonFull.Hyper_Syntax.h_ty} {https://github.com/mozow01/HyperLogic/blob/8d86e202886016097db383f932bc00a7f22c1cc0/theories/Logic/Hyper/EpsilonFull/Hyper_Syntax.v\c_hash_str L14}
\prop_gput:Nnn \g_hyperlogic_coq_urls_prop {EpsilonFull.Hyper_Syntax.tm_subst} {https://github.com/mozow01/HyperLogic/blob/8d86e202886016097db383f932bc00a7f22c1cc0/theories/Logic/Hyper/EpsilonFull/Hyper_Syntax.v\c_hash_str L302}
\prop_gput:Nnn \g_hyperlogic_coq_urls_prop {EpsilonFull.Hyper_Syntax.tm_var} {https://github.com/mozow01/HyperLogic/blob/8d86e202886016097db383f932bc00a7f22c1cc0/theories/Logic/Hyper/EpsilonFull/Hyper_Syntax.v\c_hash_str L37}
\prop_gput:Nnn \g_hyperlogic_coq_urls_prop {EpsilonFull.Hyper_Syntax_Lemmas.form_equiv_subst} {https://github.com/mozow01/HyperLogic/blob/8d86e202886016097db383f932bc00a7f22c1cc0/theories/Logic/Hyper/EpsilonFull/Hyper_Syntax_Lemmas.v\c_hash_str L137}
\prop_gput:Nnn \g_hyperlogic_coq_urls_prop {EpsilonFull.Hyper_Syntax_Lemmas.form_equiv_wf} {https://github.com/mozow01/HyperLogic/blob/8d86e202886016097db383f932bc00a7f22c1cc0/theories/Logic/Hyper/EpsilonFull/Hyper_Syntax_Lemmas.v\c_hash_str L61}
\prop_gput:Nnn \g_hyperlogic_coq_urls_prop {EpsilonFull.Hyper_Syntax_Lemmas.meta_ctx_wf} {https://github.com/mozow01/HyperLogic/blob/8d86e202886016097db383f932bc00a7f22c1cc0/theories/Logic/Hyper/EpsilonFull/Hyper_Syntax_Lemmas.v\c_hash_str L399}
\prop_gput:Nnn \g_hyperlogic_coq_urls_prop {EpsilonFullModel} {https://github.com/mozow01/HyperLogic/blob/8d86e202886016097db383f932bc00a7f22c1cc0/theories/Semantics/EpsilonFull/EpsilonFull_ModelCategory.v\c_hash_str L33}
\prop_gput:Nnn \g_hyperlogic_coq_urls_prop {EpsilonFullModelHom} {https://github.com/mozow01/HyperLogic/blob/8d86e202886016097db383f932bc00a7f22c1cc0/theories/Semantics/EpsilonFull/EpsilonFull_ModelCategory.v\c_hash_str L99}
\prop_gput:Nnn \g_hyperlogic_coq_urls_prop {EpsilonFullSubstitutivity.DefinitionalSubstitutivity} {https://github.com/mozow01/HyperLogic/blob/8d86e202886016097db383f932bc00a7f22c1cc0/theories/Semantics/Metatheory/EpsilonFull_Substitutivity.v\c_hash_str L29}
\prop_gput:Nnn \g_hyperlogic_coq_urls_prop {EpsilonFullSubstitutivity.convertible_witnesses_not_definitionally_equal} {https://github.com/mozow01/HyperLogic/blob/8d86e202886016097db383f932bc00a7f22c1cc0/theories/Semantics/Metatheory/EpsilonFull_Substitutivity.v\c_hash_str L635}
\prop_gput:Nnn \g_hyperlogic_coq_urls_prop {EpsilonFullSubstitutivity.counterexample_equation_valid_in} {https://github.com/mozow01/HyperLogic/blob/8d86e202886016097db383f932bc00a7f22c1cc0/theories/Semantics/Metatheory/EpsilonFull_Substitutivity.v\c_hash_str L444}
\prop_gput:Nnn \g_hyperlogic_coq_urls_prop {EpsilonFullSubstitutivity.fin_H3_interp_h_ty_subst_const} {https://github.com/mozow01/HyperLogic/blob/8d86e202886016097db383f932bc00a7f22c1cc0/theories/Semantics/Metatheory/EpsilonFull_Substitutivity.v\c_hash_str L357}
\prop_gput:Nnn \g_hyperlogic_coq_urls_prop {EpsilonFullSubstitutivity.separating_model_refutes_substitutivity} {https://github.com/mozow01/HyperLogic/blob/8d86e202886016097db383f932bc00a7f22c1cc0/theories/Semantics/Metatheory/EpsilonFull_Substitutivity.v\c_hash_str L562}
\prop_gput:Nnn \g_hyperlogic_coq_urls_prop {EpsilonFullSubstitutivity.substituted_witness} {https://github.com/mozow01/HyperLogic/blob/8d86e202886016097db383f932bc00a7f22c1cc0/theories/Semantics/Metatheory/EpsilonFull_Substitutivity.v\c_hash_str L118}
\prop_gput:Nnn \g_hyperlogic_coq_urls_prop {EpsilonFullSubstitutivity.substitution_counterexample_formula} {https://github.com/mozow01/HyperLogic/blob/8d86e202886016097db383f932bc00a7f22c1cc0/theories/Semantics/Metatheory/EpsilonFull_Substitutivity.v\c_hash_str L80}
\prop_gput:Nnn \g_hyperlogic_coq_urls_prop {EpsilonFullSubstitutivity.uniform_model_validates_substitutivity} {https://github.com/mozow01/HyperLogic/blob/8d86e202886016097db383f932bc00a7f22c1cc0/theories/Semantics/Metatheory/EpsilonFull_Substitutivity.v\c_hash_str L566}
\prop_gput:Nnn \g_hyperlogic_coq_urls_prop {EpsilonFullSubstitutivity.witness_of_substituted_formula} {https://github.com/mozow01/HyperLogic/blob/8d86e202886016097db383f932bc00a7f22c1cc0/theories/Semantics/Metatheory/EpsilonFull_Substitutivity.v\c_hash_str L115}
\prop_gput:Nnn \g_hyperlogic_coq_urls_prop {EpsilonFull_Completeness.epsilon_adjoint_completeness_inhabitational_gen} {https://github.com/mozow01/HyperLogic/blob/8d86e202886016097db383f932bc00a7f22c1cc0/theories/Semantics/EpsilonFull/EpsilonFull_Completeness.v\c_hash_str L2440}
\prop_gput:Nnn \g_hyperlogic_coq_urls_prop {EpsilonFull_Completeness.fiber_equiv} {https://github.com/mozow01/HyperLogic/blob/8d86e202886016097db383f932bc00a7f22c1cc0/theories/Semantics/EpsilonFull/EpsilonFull_Completeness.v\c_hash_str L43}
\prop_gput:Nnn \g_hyperlogic_coq_urls_prop {EpsilonFull_Completeness.h_derivation_subst_general} {https://github.com/mozow01/HyperLogic/blob/8d86e202886016097db383f932bc00a7f22c1cc0/theories/Semantics/EpsilonFull/EpsilonFull_Completeness.v\c_hash_str L911}
\prop_gput:Nnn \g_hyperlogic_coq_urls_prop {EpsilonFull_Completeness.h_make_ctx_ty} {https://github.com/mozow01/HyperLogic/blob/8d86e202886016097db383f932bc00a7f22c1cc0/theories/Semantics/EpsilonFull/EpsilonFull_Completeness.v\c_hash_str L2291}
\prop_gput:Nnn \g_hyperlogic_coq_urls_prop {EpsilonFull_Completeness.hom_fiber} {https://github.com/mozow01/HyperLogic/blob/8d86e202886016097db383f932bc00a7f22c1cc0/theories/Semantics/EpsilonFull/EpsilonFull_Completeness.v\c_hash_str L32}
\prop_gput:Nnn \g_hyperlogic_coq_urls_prop {EpsilonFull_Completeness.term_model_interp_ctx_id} {https://github.com/mozow01/HyperLogic/blob/8d86e202886016097db383f932bc00a7f22c1cc0/theories/Semantics/EpsilonFull/EpsilonFull_Completeness.v\c_hash_str L2320}
\prop_gput:Nnn \g_hyperlogic_coq_urls_prop {EpsilonFull_Completeness.term_model_interp_id} {https://github.com/mozow01/HyperLogic/blob/8d86e202886016097db383f932bc00a7f22c1cc0/theories/Semantics/EpsilonFull/EpsilonFull_Completeness.v\c_hash_str L1606}
\prop_gput:Nnn \g_hyperlogic_coq_urls_prop {EpsilonFull_Fibration.CoherentBaseModel} {https://github.com/mozow01/HyperLogic/blob/8d86e202886016097db383f932bc00a7f22c1cc0/theories/Semantics/EpsilonFull/EpsilonFull_Fibration.v\c_hash_str L53}
\prop_gput:Nnn \g_hyperlogic_coq_urls_prop {EpsilonFull_Fibration.EpsilonHyperModel} {https://github.com/mozow01/HyperLogic/blob/8d86e202886016097db383f932bc00a7f22c1cc0/theories/Semantics/EpsilonFull/EpsilonFull_Fibration.v\c_hash_str L63}
\prop_gput:Nnn \g_hyperlogic_coq_urls_prop {EpsilonFull_Fibration.i_atom} {https://github.com/mozow01/HyperLogic/blob/8d86e202886016097db383f932bc00a7f22c1cc0/theories/Semantics/EpsilonFull/EpsilonFull_Fibration.v\c_hash_str L69}
\prop_gput:Nnn \g_hyperlogic_coq_urls_prop {EpsilonFull_Fibration.interp_h_ctx} {https://github.com/mozow01/HyperLogic/blob/8d86e202886016097db383f932bc00a7f22c1cc0/theories/Semantics/EpsilonFull/EpsilonFull_Fibration.v\c_hash_str L100}
\prop_gput:Nnn \g_hyperlogic_coq_urls_prop {EpsilonFull_Fibration.interp_h_ty} {https://github.com/mozow01/HyperLogic/blob/8d86e202886016097db383f932bc00a7f22c1cc0/theories/Semantics/EpsilonFull/EpsilonFull_Fibration.v\c_hash_str L81}
\prop_gput:Nnn \g_hyperlogic_coq_urls_prop {EpsilonFull_ModelCategory.TermModel_initial_unique} {https://github.com/mozow01/HyperLogic/blob/8d86e202886016097db383f932bc00a7f22c1cc0/theories/Semantics/EpsilonFull/EpsilonFull_ModelCategory.v\c_hash_str L821}
\prop_gput:Nnn \g_hyperlogic_coq_urls_prop {EpsilonFull_ModelCategory.TermModel_interpretation_identity} {https://github.com/mozow01/HyperLogic/blob/8d86e202886016097db383f932bc00a7f22c1cc0/theories/Semantics/EpsilonFull/EpsilonFull_ModelCategory.v\c_hash_str L845}
\prop_gput:Nnn \g_hyperlogic_coq_urls_prop {EpsilonFull_ModelCategory.TermModel_interpretation_natural} {https://github.com/mozow01/HyperLogic/blob/8d86e202886016097db383f932bc00a7f22c1cc0/theories/Semantics/EpsilonFull/EpsilonFull_ModelCategory.v\c_hash_str L839}
\prop_gput:Nnn \g_hyperlogic_coq_urls_prop {EpsilonFull_ModelCategory.TermModel_weak_initial} {https://github.com/mozow01/HyperLogic/blob/8d86e202886016097db383f932bc00a7f22c1cc0/theories/Semantics/EpsilonFull/EpsilonFull_ModelCategory.v\c_hash_str L775}
\prop_gput:Nnn \g_hyperlogic_coq_urls_prop {EpsilonFull_SelectionModels.EpsilonFullModel_from_chosen_units} {https://github.com/mozow01/HyperLogic/blob/8d86e202886016097db383f932bc00a7f22c1cc0/theories/Semantics/EpsilonFull/EpsilonFull_SelectionModels.v\c_hash_str L112}
\prop_gput:Nnn \g_hyperlogic_coq_urls_prop {EpsilonFull_SelectionModels.selected_instance_interpretation} {https://github.com/mozow01/HyperLogic/blob/8d86e202886016097db383f932bc00a7f22c1cc0/theories/Semantics/EpsilonFull/EpsilonFull_SelectionModels.v\c_hash_str L38}
\prop_gput:Nnn \g_hyperlogic_coq_urls_prop {EpsilonFull_SelectionModels.selected_section_projection} {https://github.com/mozow01/HyperLogic/blob/8d86e202886016097db383f932bc00a7f22c1cc0/theories/Semantics/EpsilonFull/EpsilonFull_SelectionModels.v\c_hash_str L32}
\prop_gput:Nnn \g_hyperlogic_coq_urls_prop {EpsilonFull_Soundness.form_subst_iso} {https://github.com/mozow01/HyperLogic/blob/8d86e202886016097db383f932bc00a7f22c1cc0/theories/Semantics/EpsilonFull/EpsilonFull_Soundness.v\c_hash_str L20}
\prop_gput:Nnn \g_hyperlogic_coq_urls_prop {EpsilonFull_Soundness.interp_ctx_equiv} {https://github.com/mozow01/HyperLogic/blob/8d86e202886016097db383f932bc00a7f22c1cc0/theories/Semantics/EpsilonFull/EpsilonFull_Soundness.v\c_hash_str L289}
\prop_gput:Nnn \g_hyperlogic_coq_urls_prop {EpsilonFull_Soundness.interp_form_equiv} {https://github.com/mozow01/HyperLogic/blob/8d86e202886016097db383f932bc00a7f22c1cc0/theories/Semantics/EpsilonFull/EpsilonFull_Soundness.v\c_hash_str L239}
\prop_gput:Nnn \g_hyperlogic_coq_urls_prop {EpsilonFull_Soundness.interp_subst_comp} {https://github.com/mozow01/HyperLogic/blob/8d86e202886016097db383f932bc00a7f22c1cc0/theories/Semantics/EpsilonFull/EpsilonFull_Soundness.v\c_hash_str L155}
\prop_gput:Nnn \g_hyperlogic_coq_urls_prop {EpsilonFull_Soundness.soundness} {https://github.com/mozow01/HyperLogic/blob/8d86e202886016097db383f932bc00a7f22c1cc0/theories/Semantics/EpsilonFull/EpsilonFull_Soundness.v\c_hash_str L309}
\prop_gput:Nnn \g_hyperlogic_coq_urls_prop {EpsilonGoodFibration} {https://github.com/mozow01/HyperLogic/blob/8d86e202886016097db383f932bc00a7f22c1cc0/theories/Semantics/Epsilon/Epsilon_Fibration.v\c_hash_str L116}
\prop_gput:Nnn \g_hyperlogic_coq_urls_prop {Epsilon_Completeness.TermModel_EpsilonGoodFibration} {https://github.com/mozow01/HyperLogic/blob/8d86e202886016097db383f932bc00a7f22c1cc0/theories/Semantics/Epsilon/Epsilon_Completeness.v\c_hash_str L2207}
\prop_gput:Nnn \g_hyperlogic_coq_urls_prop {Epsilon_SelectionModels.EpsilonGoodFibration_from_chosen_units} {https://github.com/mozow01/HyperLogic/blob/8d86e202886016097db383f932bc00a7f22c1cc0/theories/Semantics/Epsilon/Epsilon_SelectionModels.v\c_hash_str L169}
\prop_gput:Nnn \g_hyperlogic_coq_urls_prop {Epsilon_Soundness.form_subst_iso} {https://github.com/mozow01/HyperLogic/blob/8d86e202886016097db383f932bc00a7f22c1cc0/theories/Semantics/Epsilon/Epsilon_Soundness.v\c_hash_str L51}
\prop_gput:Nnn \g_hyperlogic_coq_urls_prop {FHCanonicalChoice_unfold} {https://github.com/mozow01/HyperLogic/blob/8d86e202886016097db383f932bc00a7f22c1cc0/theories/Semantics/EpsilonFull/Canonical_FiniteChain_Model.v\c_hash_str L472}
\prop_gput:Nnn \g_hyperlogic_coq_urls_prop {GoodFibration} {https://github.com/mozow01/HyperLogic/blob/8d86e202886016097db383f932bc00a7f22c1cc0/theories/Cat/Fibration.v\c_hash_str L13}
\prop_gput:Nnn \g_hyperlogic_coq_urls_prop {H3CanonicalFullModel} {https://github.com/mozow01/HyperLogic/blob/8d86e202886016097db383f932bc00a7f22c1cc0/theories/Semantics/EpsilonFull/FiniteHeytingExamples.v\c_hash_str L159}
\prop_gput:Nnn \g_hyperlogic_coq_urls_prop {H3FiniteHeytingAlgebra} {https://github.com/mozow01/HyperLogic/blob/8d86e202886016097db383f932bc00a7f22c1cc0/theories/Semantics/EpsilonFull/FiniteHeytingExamples.v\c_hash_str L36}
\prop_gput:Nnn \g_hyperlogic_coq_urls_prop {H3_canonical_LEM_underivable} {https://github.com/mozow01/HyperLogic/blob/8d86e202886016097db383f932bc00a7f22c1cc0/theories/Semantics/EpsilonFull/FiniteHeytingExamples.v\c_hash_str L259}
\prop_gput:Nnn \g_hyperlogic_coq_urls_prop {H5FiniteHeytingAlgebra} {https://github.com/mozow01/HyperLogic/blob/8d86e202886016097db383f932bc00a7f22c1cc0/theories/Semantics/EpsilonFull/FiniteHeytingExamples.v\c_hash_str L282}
\prop_gput:Nnn \g_hyperlogic_coq_urls_prop {IFOL.Base_Syntax.b_tm} {https://github.com/mozow01/HyperLogic/blob/8d86e202886016097db383f932bc00a7f22c1cc0/theories/Logic/Base/IFOL/Base_Syntax.v\c_hash_str L23}
\prop_gput:Nnn \g_hyperlogic_coq_urls_prop {IFOL.Hyper_Syntax.h_derivation} {https://github.com/mozow01/HyperLogic/blob/8d86e202886016097db383f932bc00a7f22c1cc0/theories/Logic/Hyper/IFOL/Hyper_Syntax.v\c_hash_str L191}
\prop_gput:Nnn \g_hyperlogic_coq_urls_prop {IFOL.Hyper_Syntax.h_ty} {https://github.com/mozow01/HyperLogic/blob/8d86e202886016097db383f932bc00a7f22c1cc0/theories/Logic/Hyper/IFOL/Hyper_Syntax.v\c_hash_str L14}
\prop_gput:Nnn \g_hyperlogic_coq_urls_prop {IFOLHyperModel} {https://github.com/mozow01/HyperLogic/blob/8d86e202886016097db383f932bc00a7f22c1cc0/theories/Semantics/IFOL/IFOL_Fibration.v\c_hash_str L57}
\prop_gput:Nnn \g_hyperlogic_coq_urls_prop {PasqualiEnoughEpsilonTerms} {https://github.com/mozow01/HyperLogic/blob/8d86e202886016097db383f932bc00a7f22c1cc0/theories/Semantics/Metatheory/Pasquali_Epsilon.v\c_hash_str L131}
\prop_gput:Nnn \g_hyperlogic_coq_urls_prop {Pullback} {https://github.com/mozow01/HyperLogic/blob/8d86e202886016097db383f932bc00a7f22c1cc0/theories/Cat/Fibration.v\c_hash_str L23}
\prop_gput:Nnn \g_hyperlogic_coq_urls_prop {TermModel_AdjointEpsilonFullFibration} {https://github.com/mozow01/HyperLogic/blob/8d86e202886016097db383f932bc00a7f22c1cc0/theories/Semantics/EpsilonFull/EpsilonFull_Completeness.v\c_hash_str L1693}
\prop_gput:Nnn \g_hyperlogic_coq_urls_prop {TermModel_EpsilonFullModel} {https://github.com/mozow01/HyperLogic/blob/8d86e202886016097db383f932bc00a7f22c1cc0/theories/Semantics/EpsilonFull/EpsilonFull_ModelCategory.v\c_hash_str L496}
\prop_gput:Nnn \g_hyperlogic_coq_urls_prop {TermModel_IFOLHyperModel} {https://github.com/mozow01/HyperLogic/blob/8d86e202886016097db383f932bc00a7f22c1cc0/theories/Semantics/IFOL/IFOL_Completeness.v\c_hash_str L1370}
\prop_gput:Nnn \g_hyperlogic_coq_urls_prop {all_predicate_epsilonfull_soundness_completeness} {https://github.com/mozow01/HyperLogic/blob/8d86e202886016097db383f932bc00a7f22c1cc0/theories/Semantics/EpsilonFull/EpsilonFull_AllPredicates.v\c_hash_str L324}
\prop_gput:Nnn \g_hyperlogic_coq_urls_prop {epsilon_completeness_inhabitational_gen} {https://github.com/mozow01/HyperLogic/blob/8d86e202886016097db383f932bc00a7f22c1cc0/theories/Semantics/Epsilon/Epsilon_Completeness.v\c_hash_str L2380}
\prop_gput:Nnn \g_hyperlogic_coq_urls_prop {epsilon_section} {https://github.com/mozow01/HyperLogic/blob/8d86e202886016097db383f932bc00a7f22c1cc0/theories/Cat/EpsilonSelection.v\c_hash_str L16}
\prop_gput:Nnn \g_hyperlogic_coq_urls_prop {epsilon_selection_unit_equivalence} {https://github.com/mozow01/HyperLogic/blob/8d86e202886016097db383f932bc00a7f22c1cc0/theories/Cat/EpsilonSelection.v\c_hash_str L110}
\prop_gput:Nnn \g_hyperlogic_coq_urls_prop {epsilon_selection_unit_iff_inhabited_adjunction} {https://github.com/mozow01/HyperLogic/blob/8d86e202886016097db383f932bc00a7f22c1cc0/theories/Cat/EpsilonSelection.v\c_hash_str L356}
\prop_gput:Nnn \g_hyperlogic_coq_urls_prop {epsilonfull_dual_drinker_provable} {https://github.com/mozow01/HyperLogic/blob/8d86e202886016097db383f932bc00a7f22c1cc0/theories/Semantics/Metatheory/DualDrinker.v\c_hash_str L41}
\prop_gput:Nnn \g_hyperlogic_coq_urls_prop {epsilonfull_not_conservative_over_ifol} {https://github.com/mozow01/HyperLogic/blob/8d86e202886016097db383f932bc00a7f22c1cc0/theories/Semantics/Metatheory/IFOLFull_NonConservativity.v\c_hash_str L87}
\prop_gput:Nnn \g_hyperlogic_coq_urls_prop {ifol_completeness_inhabitational_gen} {https://github.com/mozow01/HyperLogic/blob/8d86e202886016097db383f932bc00a7f22c1cc0/theories/Semantics/IFOL/IFOL_Completeness.v\c_hash_str L1943}
\prop_gput:Nnn \g_hyperlogic_coq_urls_prop {ifol_dual_drinker_refuted} {https://github.com/mozow01/HyperLogic/blob/8d86e202886016097db383f932bc00a7f22c1cc0/theories/Semantics/IFOL/DualDrinker.v\c_hash_str L674}
\prop_gput:Nnn \g_hyperlogic_coq_urls_prop {ifol_soundness} {https://github.com/mozow01/HyperLogic/blob/8d86e202886016097db383f932bc00a7f22c1cc0/theories/Semantics/IFOL/IFOL_Soundness.v\c_hash_str L283}
\prop_gput:Nnn \g_hyperlogic_coq_urls_prop {ifol_to_epsilon_derivation} {https://github.com/mozow01/HyperLogic/blob/8d86e202886016097db383f932bc00a7f22c1cc0/theories/Semantics/Metatheory/IFOL_EpsilonFull_Translation.v\c_hash_str L258}
\prop_gput:Nnn \g_hyperlogic_coq_urls_prop {ifol_to_epsilon_h_ty} {https://github.com/mozow01/HyperLogic/blob/8d86e202886016097db383f932bc00a7f22c1cc0/theories/Semantics/Metatheory/IFOL_EpsilonFull_Translation.v\c_hash_str L57}
\prop_gput:Nnn \g_hyperlogic_coq_urls_prop {interpreted_formula_epsilon_selection_adjunction} {https://github.com/mozow01/HyperLogic/blob/8d86e202886016097db383f932bc00a7f22c1cc0/theories/Semantics/EpsilonFull/EpsilonFull_AllPredicates.v\c_hash_str L67}
\prop_gput:Nnn \g_hyperlogic_coq_urls_prop {pasquali_has_enough_epsilon_terms_semantic} {https://github.com/mozow01/HyperLogic/blob/8d86e202886016097db383f932bc00a7f22c1cc0/theories/Semantics/Metatheory/Pasquali_Epsilon.v\c_hash_str L193}
\prop_gput:Nnn \g_hyperlogic_coq_urls_prop {pullback_has_left_adjoint} {https://github.com/mozow01/HyperLogic/blob/8d86e202886016097db383f932bc00a7f22c1cc0/theories/Cat/Fibration.v\c_hash_str L49}
\prop_gput:Nnn \g_hyperlogic_coq_urls_prop {pullback_has_right_adjoint} {https://github.com/mozow01/HyperLogic/blob/8d86e202886016097db383f932bc00a7f22c1cc0/theories/Cat/Fibration.v\c_hash_str L50}
\ExplSyntaxOff

\ExplSyntaxOn
\DeclareRobustCommand{\coqterm}[1]{
  \href{\prop_item:Nn \g_hyperlogic_coq_urls_prop {#1}}{
    \textcolor{coqgrayblue}{\textnormal{\ttfamily
      \str_map_inline:nn {#1}
        {##1\str_case:nn {##1} {{.}{\allowbreak}{_}{\allowbreak}}}
    }}
  }
}
\ExplSyntaxOff

\usepackage{tikz}%
\usetikzlibrary{fit}
\usetikzlibrary{positioning, arrows.meta, shapes.geometric, 3d, babel, calc}%
\usepackage{tikz-cd}%

\newcommand{\sem}[1]{[\![#1]\!]}

\theoremstyle{thmstyleone}%
\newtheorem{theorem}{Theorem}[section]%
\newtheorem{proposition}{Proposition}[section]%
\newtheorem{corollary}{Corollary}[section]%

\theoremstyle{thmstyletwo}%
\newtheorem{example}{Example}[section]%
\newtheorem{remark}{Remark}[section]%

\theoremstyle{thmstylethree}%
\newtheorem{definition}{Definition}[section]%
\newtheorem{notation}{Notation}[section]%

\begin{document}

\title[Internal Languages for Thin Epsilon-Hyperdoctrines]{Internal Languages for Thin Epsilon-Hyperdoctrines: Soundness, Completeness, and Rocq Formalization}

\author*[1]{\fnm{Zolt\'an G\'abor} \sur{Moln\'ar}}\email{mozow@math.bme.hu}
\author[1]{\fnm{Csaba} \sur{T\'oth}}\email{toth.csaba@ttk.bme.hu}

\affil*[1]{\orgdiv{Department of Algebra and Geometry, Institute of Mathematics, Faculty of Natural Sciences}, \orgname{Budapest University of Technology and Economics}, \orgaddress{\street{Egry J\'ozsef u. 1.}, \city{Budapest}, \postcode{H-1111}, \country{Hungary}}}

\abstract{We present a typed proof-term calculus for intuitionistic
intensional first-order logic with Hilbert's epsilon-operator and prove
its soundness and completeness with respect to hyperdoctrinal models
with thin bicartesian closed setoid categories (BiCCCs) as fibres and
formula-indexed witness selections.
The calculus provides an explicit internal language for these
epsilon-hyperdoctrines: for each fixed typed signature, its syntactic
term model is initial for syntax-indexed, interpretation-preserving
model morphisms, with uniqueness up to setoid equality.
The nested epsilon syntax is organized by a well-founded dependency
relation on formula labels. Concrete countermodels exhibit failure of
excluded middle and show that formula conversion need not identify the
selected witnesses. We also compare the full calculus with its
epsilon-free and epsilon-only fragments.
The principal constructions and metatheorems are formalized in Rocq.
With setoid BiCCCs, proof irrelevance is a choice of equality, not the
erasure of proofs: the proofs and their categorical operations remain
available as the basis for a future proof-relevant categorical semantics.}

\keywords{Hilbert epsilon-operator, axiom of choice, hyperdoctrines, intensional logic, Rocq formalization}

\pacs[MSC 2020]{Primary 18C50; Secondary 03B20, 68V20}

\maketitle

\section{Introduction}
\label{sec:introduction}

Hilbert's $\varepsilon$-operator gives a name to an individual object satisfying a predicate, if such an object exists. In the usual first-order notation, a formula $\Phi(\vec x,y)$ has an epsilon term $\varepsilon y.\Phi(\vec x,y)$, where $\vec x$ lists its
parameters. A first-order logical calculus with epsilon terms is called an intensional Hilbertian epsilon calculus if only the critical epsilon axiom is required:
\[
 \exists y\,\Phi(\vec x,y)
 \;\to \;
 \Phi\bigl(\vec x,\varepsilon y.\Phi(\vec x,y)\bigr)
\]
and neither extensionality nor substitution invariance of the selected witnesses is assumed. (Cf.~\cite{HilbertBernays1939}). We define a proof term calculus in natural-deduction style for the intuitionistic, intensional, first-order $\varepsilon$-logic~\cite{Prawitz1965} and we use the notation $\mathrm{iFOL}_{\exists\varepsilon}$ for it. Building on the works on categorical
$\varepsilon$-doctrines~\cite{Bell1993Intuitionistic,Pasquali2014}, we give a sound and complete thin, first-order, hyperdoctrinal categorical semantics for $\mathrm{iFOL}_{\exists\varepsilon}$, whose models we call \textit{thin $\varepsilon$-hyperdoctrines}.

\subsection{Thin \texorpdfstring{$\varepsilon$}{epsilon}-hyperdoctrines}
We work with bicartesian closed setoid category-valued hyperdoctrines with thin fibres, i.e. a setoid equivalence is defined over hom-sets and parallel fibre morphisms are regarded as equal under this equivalence.
With setoid BiCCCs, proof irrelevance is a choice of equality, not the
erasure of proofs: the proofs and their categorical operations remain
available as the basis for a future proof-relevant categorical semantics.
We suppose that the base category is a cartesian category ($\mathcal B$) and the hyperdoctrine's fibre system is given by a $\mathcal B$-indexed category
\[
 P:\mathcal B^{\mathrm{op}}\longrightarrow\mathsf{BiCCC},
 \qquad
 f^*:=P(f):P(Y)\longrightarrow P(X)
 \quad(f:X\to Y).
\]
For every projection morphism $\pi:X\times A\to X$ of the base, quantifications are given by the usual adjunctions
$\Sigma_\pi\dashv\pi^*\dashv\Pi_\pi$, with the Beck--Chevalley
isomorphisms~\cite{Lawvere1970,Jacobs1999}. These data define a total hyperdoctrinal category $\int _P$.

Let us fix a signature $\Sigma$ for $\mathrm{iFOL}_{\exists\varepsilon}$. We equip $\int _P$ with the structure of a thin $\varepsilon$-hyperdoctrine by the following construction. Predicates are interpreted by objects of the fibres of $\int _P$, and the morphisms represent proofs from given premisses. Suppose $\Phi(\vec x,y)$ is interpreted by the object
$\varphi\in P(X\times A)$, where $X$ interprets its parameter context and
$A$ is the type of $y$. For an epsilon term $\varepsilon y.\Phi(\vec x,y)$, it is supposed that there is a base morphism $e_\Phi:X\to A$ satisfying
\[
 \Sigma_\pi\varphi
 \cong
 \langle\mathrm{id}_X,e_\Phi\rangle^*\varphi
\]
(Cf.~\cite{Pasquali2016}). This condition corresponds to the Hilbertian equivalence
\[
 \exists y\,\Phi(\vec x,y)\leftrightarrow \Phi(\vec x,\varepsilon y.\Phi(\vec x,y)).
\]
Thus a thin $\varepsilon$-hyperdoctrine depends on the fixed signature $\Sigma$, and the base morphisms $e_\Phi$ are indexed by formulas $\Phi$ of the syntax.

\subsection{Results}

Our main results are soundness and completeness for
$\mathrm{iFOL}_{\exists\varepsilon}$, with iFOL as the epsilon-free
comparison calculus. Appendix~\ref{app:epsilon-only} gives the
corresponding syntax, models, and metatheorems for the epsilon-only
calculus $\mathrm{iFOL}_\varepsilon$. Each calculus has its own class
of thin hyperdoctrinal models. Soundness interprets
derivations as fibre morphisms. For well-formed formulae and contexts,
completeness recovers a derivation whenever the corresponding hom-set is
inhabited in every model. In each completeness proof, we obtain a
derivation from a morphism in the term model. For a fixed signature
$\Sigma$, the term model of $\mathrm{iFOL}_{\exists\varepsilon}$ is
initial for the model morphisms defined below, with uniqueness
up to their setoid equivalence. In this sense, the calculus gives an
internal language for these models.

The categorical interpretation of the critical axiom follows
Pasquali~\cite[Prop.~6.2]{Pasquali2016}. Completeness for
$\mathrm{iFOL}_{\exists\varepsilon}$ still holds when models are equipped
with $\varepsilon$-selections for every semantic predicate over an
interpreted product type, not only for predicates represented by formulae.
The term model itself satisfies this extra condition. It is Pasquali's
condition of having enough $\varepsilon$-terms, restricted to the
interpreted base types~\cite[Def.~2.1]{Pasquali2019}.
We do not require the equality predicates of his
$\varepsilon$-doctrines~\cite[Def.~2.2]{Pasquali2019}.

$\mathrm{iFOL}_{\exists\varepsilon}$ proves the Dual Drinker formula
\[
 \exists x\,\bigl((\exists y\,R(y))\to R(x)\bigr)
\]
for a unary predicate $R$. This is the schematic Hilbert choice principle
studied in~\cite[Sec.~4]{WarrenDienerMcKubreJordens2018}. An $H_5$-valued
countermodel refutes it in intuitionistic first-order logic, proving
non-conservativity over that calculus. A model over the three-element
Heyting chain $H_3$ refutes excluded middle in
$\mathrm{iFOL}_{\exists\varepsilon}$. Thus the calculus has the expected
Hilbertian choice principle without becoming classical, even though
$\varepsilon$-substitution has the full left-adjoint role of existential
quantification. This agrees with Pasquali's nonclassical
$\varepsilon$-semantics~\cite[Sec.~4]{Pasquali2019}.

We also prove that a concrete substitution equation between
$\varepsilon$-terms is independent of our semantic axioms. It holds in
one model and fails in another, although their base, fibres, and atomic
interpretation are the same. The selected formula
instances remain stable under substitution up to isomorphism.
Thus the substitution law for the chosen formula instances does not
force the substitution law for their witnesses. This is a definitional
version of the intensional--substitutive distinction discussed
in~\cite[Sec.~2.3]{Molnar2013}. Formula conversion need not give equal
$\varepsilon$-terms either. These are base-term equalities, so no equality
predicate is needed in the object language.

The principal constructions and metatheorems are formalized in Rocq.
The proofs were developed with AI assistance under the authors'
mathematical supervision. Section~\ref{sec:formalization-coq} records the
formal scope and metatheoretic assumptions.

\section{Syntax and calculus}
\label{sec:syntax-and-calculus}

\subsection{Base and formula syntax}

We first give $\mathrm{iFOL}_{\exists\varepsilon}$ over a typed
function--predicate signature $\Sigma$. Its proof judgement is
$I\mid\Gamma\vdash_{\exists\varepsilon}p:\Phi$; inside its displayed
rule trees we abbreviate this as $I\mid\Gamma\vdash p:\Phi$.
The epsilon-free calculus is specified below; the epsilon-only calculus
is given in Appendix~\ref{app:epsilon-only}.
Names in environment headings identify the corresponding Coq declarations
or the definitions and lemmas used in the displayed instance.

\begin{remark}[The dependence of epsilon terms on formulae, \coqterm{CanonicalEpsilonFull.EpsilonLabel}, \coqterm{CanonicalEpsilonFull.dependency_well_founded}]
An $\varepsilon$-term is indexed by a formula, which may itself contain
$\varepsilon$-terms. We record the formula in the label of a base constant,
so terms, formulae, and labels must be defined together. If these
dependencies were circular, evaluating a labelled constant could require
its own value. We define finite syntax trees by simultaneous induction
and prove that label dependencies decrease a natural-number rank.
This gives the recursion needed later to construct epsilon interpretations.
\end{remark}

\begin{definition}[Base types, \coqterm{EpsilonFull.Base_Syntax.b_ty}]
\label{def:base-types}
The type $\mathsf{BType}$ of base types is inductively defined by
\[
 A,B,I,J ::= \mathbf 1\mid S\mid A\times B.
\]
(Cf.~\cite[p.~138]{Jacobs1999}).
\end{definition}

\begin{remark}[Cartesian reading, \coqterm{EpsilonFull.Base_Syntax.b_ty}]
We use base types for base contexts, too. The base contexts are intended to be the variable declarations of a predicate formula. Base types mimic a cartesian base category:
$\mathbf 1$ represents the terminal object, $S$ is the unique primitive sort
of the one-sorted language, and $A\times B$ represents the binary product.
Thus base contexts are product types, where
$\mathbf 1$ represents the empty context and $I\times A$ the extension of
$I$ by type $A$. This is an simplified version of Jacobs' base types, without function types and contexts represented by lists.
(Cf.~\cite[p.~139]{Jacobs1999}).
\end{remark}

\begin{definition}[Typed function--predicate signature, \coqterm{EpsilonFull.Base_Syntax.FunctionSignature}, \coqterm{EpsilonFull.Hyper_Syntax.PredicateSignature}]
\label{def:typed-signature}
A typed function--predicate signature $\Sigma$ over $\mathsf{BType}$ consists of
sets $\mathcal F$ of function symbols and $\mathcal R$ of predicate
symbols, together with maps
\[
 \operatorname{dom},\operatorname{cod}:\mathcal F\to\mathsf{BType},
 \qquad
 \operatorname{ar}:\mathcal R\to\mathsf{BType}.
\]
Thus $u\in\mathcal F$ has input type $\operatorname{dom}(u)$ and output type
$\operatorname{cod}(u)$, while $R\in\mathcal R$ has argument type
$\operatorname{ar}(R)$. Multiple arguments are represented by product types.
(Cf.~\cite[p.~222]{Jacobs1999}).
\end{definition}

\begin{example}[Nested epsilon terms, \coqterm{EpsilonFull.Examples.nested_epsilon_term_typing}]
\label{ex:nested-epsilon-terms}
In the intended $\mathcal L_{\exists\varepsilon}$-language, let $R$ be a predicate symbol with $\operatorname{ar}(R)=S\times S$.
In ordinary variable notation, with $x,y$ of sort $S$, consider
\[
 \varepsilon x\,R\bigl(x,\varepsilon y\,R(x,y)\bigr).
\]
The inner epsilon term depends on $x$. A recursive interpretation first
defines its value for each $x$, then uses these values to interpret the
outer epsilon term.
\end{example}

\begin{definition}[The $\mathrm{iFOL}_{\exists\varepsilon}$-language, \coqterm{EpsilonFull.Base_Syntax.b_tm}, \coqterm{EpsilonFull.Hyper_Syntax.h_ty}, \coqterm{CanonicalEpsilonFull.EpsilonLabel}]
\label{def:epsilon-language}
Fix a typed function--predicate signature $\Sigma$. The language
$\mathcal L_{\exists\varepsilon}$ is given by the sets
$\mathsf{BTerm}_\Sigma^{\exists\varepsilon}$ of raw base terms,
$\mathsf{Form}_\Sigma^{\exists\varepsilon}$ of raw formulae, and $\mathsf{Lab}_\Sigma^{\exists\varepsilon}$ of
$\varepsilon$-labels, defined simultaneously by induction as sets of
finite syntax trees:
\[
\begin{aligned}
 \mathsf{BTerm}_\Sigma^{\exists\varepsilon}\ni t
   &::=\mathrm{id}\mid g\circ f\mid {!}\mid\langle f,g\rangle
       \mid\pi_1\mid\pi_2\mid \widehat u\mid c_s^A,\\
 \mathsf{Form}_\Sigma^{\exists\varepsilon}\ni\Phi
   &::=\top\mid\bot\mid R(t)\mid
       \Phi\land\Psi\mid\Phi\lor\Psi\mid\Phi\to\Psi
       \mid\forall_A\Phi\mid\exists_A\Phi,\\
 \mathsf{Lab}_\Sigma^{\exists\varepsilon}\ni s&::=\langle A,\Phi\rangle,
\end{aligned}
\]
where $f,g,t\in\mathsf{BTerm}_\Sigma^{\exists\varepsilon}$,
$\Phi,\Psi\in\mathsf{Form}_\Sigma^{\exists\varepsilon}$, $s\in\mathsf{Lab}_\Sigma^{\exists\varepsilon}$,
$A\in\mathsf{BType}$, $u\in\mathcal F$, and $R\in\mathcal R$.
A logical context is a list
$\Gamma\in\operatorname{List}(\mathsf{Form}_\Sigma^{\exists\varepsilon})$.
For $A\in\mathsf{BType}$ and $\Phi\in\mathsf{Form}_\Sigma^{\exists\varepsilon}$, define
\[
 \varepsilon_\Phi^A:=c_{\langle A,\Phi\rangle}^A
 \in\mathsf{BTerm}_\Sigma^{\exists\varepsilon}.
\]
A label stores the raw formula itself, including its terms and their
labels. Typing conditions are imposed separately below.
\end{definition}

\begin{definition}[Label dependency and rank, \coqterm{CanonicalEpsilonFull.immediate_dependency}, \coqterm{CanonicalEpsilonFull.epsilon_rank}]
\label{def:label-rank}
In the raw $\mathcal L_{\exists\varepsilon}$-syntax over a fixed typed signature $\Sigma$, a label $r$ occurs directly in a raw formula $\Phi$ if a constant $c_r^B$
occurs in one of its base terms, for some $B\in\mathsf{BType}$, without
entering the formulae stored in other labels. For $r,s\in\mathsf{Lab}_\Sigma^{\exists\varepsilon}$,
write $r\prec s$ if $s=\langle A,\Phi\rangle$ and $r$ occurs directly in
$\Phi$.

The rank $\operatorname{rk}$ is defined simultaneously by structural
recursion on labels, terms, and formulae. It is $0$ on
$\mathrm{id},{!},\pi_1,\pi_2,\widehat u,\top,\bot$, and
\[
\begin{aligned}
 \operatorname{rk}\langle A,\Phi\rangle
   &:=1+\operatorname{rk}(\Phi),
 \qquad \operatorname{rk}(c_s^B):=\operatorname{rk}(s),\\
 \operatorname{rk}(g\circ f)
   &:=\operatorname{rk}\langle f,g\rangle
     :=\max\{\operatorname{rk}(f),\operatorname{rk}(g)\},\\
 \operatorname{rk}(R(t))&:=\operatorname{rk}(t),\\
 \operatorname{rk}(\Phi\star\Psi)
   &:=\max\{\operatorname{rk}(\Phi),\operatorname{rk}(\Psi)\},\\
 \operatorname{rk}(\forall_A\Phi)
   &:=\operatorname{rk}(\exists_A\Phi):=\operatorname{rk}(\Phi),
\end{aligned}
\]
where $\star\in\{\land,\lor,\to\}$. Thus rank measures label nesting,
not the number of logical connectives.
\end{definition}

\begin{example}[Label nesting, \coqterm{EpsilonFull.Examples.second_label_rank}]
\label{ex:label-nesting}
In $\mathcal L_{\exists\varepsilon}$ over a signature with $\operatorname{ar}(R)=S$, the raw expressions
\[
 s_0:=\langle S,\top\rangle,\qquad
 \Phi_1:=R(c_{s_0}^{S}),\qquad
 s_1:=\langle S,\Phi_1\rangle
\]
satisfy $s_0\prec s_1$,
$\operatorname{rk}(s_0)=\operatorname{rk}(\Phi_1)=1$, and
$\operatorname{rk}(s_1)=2$. Here $s_0$ is a subtree of $s_1$, not a
reference to a separately assigned name.
\end{example}

\begin{proposition}[Decrease of label rank, \coqterm{CanonicalEpsilonFull.immediate_dependency_rank}]
\label{prop:label-rank}
Fix a typed signature $\Sigma$ and its finite-tree language $\mathcal L_{\exists\varepsilon}$. For all $r,s\in\mathsf{Lab}_\Sigma^{\exists\varepsilon}$,
\[
 r\prec s\quad\Longrightarrow\quad
 \operatorname{rk}(r)<\operatorname{rk}(s).
\]
\end{proposition}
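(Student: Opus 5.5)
The plan is to reduce the statement to an auxiliary lemma about direct occurrence, proved by mutual structural induction over base terms and formulae. The lemma: if a label $r$ occurs directly in a base term $t$, then $\operatorname{rk}(r)\le\operatorname{rk}(t)$; and if $r$ occurs directly in a formula $\Phi$, then $\operatorname{rk}(r)\le\operatorname{rk}(\Phi)$. Granting it, the proposition follows at once. If $r\prec s$, then $s=\langle A,\Phi\rangle$ with $r$ occurring directly in $\Phi$, so
\[
 \operatorname{rk}(r)\le\operatorname{rk}(\Phi)<1+\operatorname{rk}(\Phi)=\operatorname{rk}\langle A,\Phi\rangle=\operatorname{rk}(s).
\]

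To make the induction precise, I will first define direct occurrence as an inductive predicate (mirroring the Coq relation \coqterm{CanonicalEpsilonFull.immediate_dependency}). It has a base clause: $r$ occurs directly in $c_r^B$ for every $B$. Its congruence clauses propagate occurrence from $f$ or $g$ to $g\circ f$ and $\langle f,g\rangle$; from $t$ to $R(t)$; from either component to $\Phi\star\Psi$; and from $\Phi$ to $\forall_A\Phi$ and $\exists_A\Phi$. Crucially, there is no clause passing from a formula stored inside a label $s'$ to the constant $c_{s'}^B$; this is exactly the restriction ``without entering the formulae stored in other labels''.

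The lemma is then proved by induction on the derivation of direct occurrence, or equivalently by simultaneous structural induction on terms and formulae, with one case per clause.
\begin{itemize}
\item Base case: $\operatorname{rk}(c_r^B)=\operatorname{rk}(r)$, so the bound holds with equality.
\item Binary clauses: if $r$ occurs in $f$, then $\operatorname{rk}(r)\le\operatorname{rk}(f)\le\max\{\operatorname{rk}(f),\operatorname{rk}(g)\}$, and symmetrically for $g$. The same argument covers $\land,\lor,\to$.
\item Unary clauses $R(t)$, $\forall_A\Phi$, $\exists_A\Phi$: the rank is unchanged, so the inductive hypothesis transfers directly.
\item Remaining constructors $\mathrm{id},{!},\pi_1,\pi_2,\widehat u,\top,\bot$: no label occurs directly, so these cases are vacuous.
\end{itemize}

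The only real obstacle is formal. Because labels, terms and formulae are mutually inductive, the induction principle must be the mutual (or nested) one. If the predicate is instead defined as a boolean or list-valued ``directly occurring labels'' function, I would prove membership-implies-bound by mutual recursion with \texttt{Fixpoint}/\texttt{with}, or generate a combined scheme. Mathematically, the whole content is that direct occurrence stops at constants, so each label's rank is bounded by the rank of its host, and wrapping in a label adds exactly one.
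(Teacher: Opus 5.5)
Your proposal is correct and follows the paper's proof. The paper likewise first proves $\operatorname{rk}(r)\le\operatorname{rk}(\Phi)$ for direct occurrence by induction on terms and formulae, and then concludes with $\operatorname{rk}(r)\le\operatorname{rk}(\Phi)<1+\operatorname{rk}(\Phi)=\operatorname{rk}(s)$; your case analysis (equality at $c_r^B$, monotonicity of $\max$, no clause entering stored label formulae) spells out the induction that the paper leaves implicit.
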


\begin{proof}
Induction on terms and formulae gives
$\operatorname{rk}(r)\leq\operatorname{rk}(\Phi)$ whenever $r$ occurs
directly in $\Phi$. For $s=\langle A,\Phi\rangle$ this gives
$\operatorname{rk}(r)\leq\operatorname{rk}(\Phi)
<1+\operatorname{rk}(\Phi)=\operatorname{rk}(s)$.
\end{proof}

\begin{corollary}[Well-founded recursion on labels, \coqterm{CanonicalEpsilonFull.dependency_well_founded}]
\label{cor:label-recursion}
For the raw $\mathcal L_{\exists\varepsilon}$-labels over any typed signature $\Sigma$, the direct-occurrence relation $\prec$ is well founded and has no dependency cycles.
A value at $\langle A,\Phi\rangle$ can therefore be defined recursively
from values at the labels occurring directly in $\Phi$.
\end{corollary}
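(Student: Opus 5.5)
The corollary follows from Proposition~\ref{prop:label-rank} by the standard argument that a relation which strictly decreases a natural-number measure is well founded. The plan has three steps, taken in order.

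First, consider the inverse image of $<$ on $\mathbb N$ along $\operatorname{rk}:\mathsf{Lab}_\Sigma^{\exists\varepsilon}\to\mathbb N$, that is, the relation $r\mathrel{<_{\operatorname{rk}}}s:\iff\operatorname{rk}(r)<\operatorname{rk}(s)$. It is well founded because $<$ on $\mathbb N$ is. Concretely, I would prove by strong induction on $n$ that every label $s$ with $\operatorname{rk}(s)\le n$ is accessible for $<_{\operatorname{rk}}$. Proposition~\ref{prop:label-rank} says $\prec\;\subseteq\;<_{\operatorname{rk}}$, and a subrelation of a well-founded relation is well founded, since accessibility transfers to smaller relations. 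Hence $\prec$ is well founded. In the Rocq development this is the composition of the library facts \texttt{wf\_inverse\_image} and \texttt{wf\_incl} with \texttt{lt\_wf}.

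Second, we rule out dependency cycles. Suppose $s_0\prec s_1\prec\cdots\prec s_n=s_0$ with $n\ge1$. Applying Proposition~\ref{prop:label-rank} along the chain gives $\operatorname{rk}(s_0)<\operatorname{rk}(s_0)$, which is a contradiction. The same argument shows that the transitive closure $\prec^+$ is irreflexive. Equivalently, it excludes infinite descending chains, because their ranks would form an infinite strictly decreasing sequence in $\mathbb N$.

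Third, the recursion claim is the well-founded recursion theorem for $\prec$. Given a family of target sets $V(s)$ and a step function that produces a value at $s=\langle A,\Phi\rangle$ from values on $\{r\mid r\prec s\}$, there is a unique function $v$ satisfying the recursion equation. Since $\prec$-predecessors of $\langle A,\Phi\rangle$ are exactly the labels occurring directly in $\Phi$, this is the stated form.

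The only mildly delicate point is formal. The step function receives its recursive values only on labels with a proof of $r\prec s$, so the unfolding equation for the recursor needs either proof-irrelevance of accessibility or a fixpoint-equation lemma stated up to extensional equality of the step function. I would discharge this with \texttt{Fix\_eq}, assuming extensionality of the step, rather than unfolding \texttt{Acc} directly.
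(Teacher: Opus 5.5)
Your proposal is correct and follows the paper's proof. Proposition~\ref{prop:label-rank} places $\prec$ inside the inverse image of $<$ on $\mathbb N$ along $\operatorname{rk}$, which gives well-foundedness and hence well-founded recursion, and a cycle would force $\operatorname{rk}(s)<\operatorname{rk}(s)$. One wording slip: absence of cycles is not in general \emph{equivalent} to absence of infinite descending chains (for example, $<$ on $\mathbb Z$ has no cycles but has such chains), although your rank argument does exclude both here.
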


\begin{proof}
Proposition~\ref{prop:label-rank} reduces $\prec$ to the well-founded
order $<$ on $\mathbb N$. Hence $\prec$ is well founded and supports
well-founded recursion.
A dependency cycle would imply
$\operatorname{rk}(s)<\operatorname{rk}(s)$.
\end{proof}

\begin{remark}[Use in epsilon interpretation, \coqterm{FHCanonicalChoice_unfold}]
To construct the value of $\varepsilon_\Phi^A$, we first interpret the
labelled constants occurring in $\Phi$. Their labels have smaller rank.
The resulting evaluation of $\Phi$ is then used to choose a witness.
In the $H_3$-valued model of Proposition~\ref{prop:h3-epsilon-model},
the finite carriers and the order $\bot<m<\top$ supply a maximizing
witness at each step. The rank argument justifies the recursion, but
does not by itself supply witnesses in an arbitrary model.
\end{remark}

\begin{remark}[Rocq implementation, \coqterm{CanonicalEpsilonFull.EpsilonLabel}]
Rocq first defines raw terms and formulae with an arbitrary label type,
then defines the nested inductive type \texttt{EpsilonLabel Func Pred}.
The sets $\mathsf{BTerm}_\Sigma^{\exists\varepsilon}$, $\mathsf{Form}_\Sigma^{\exists\varepsilon}$, and
$\mathsf{Lab}_\Sigma^{\exists\varepsilon}$ correspond to \texttt{b\_tm}, \texttt{h\_ty}, and
\texttt{EpsilonLabel}, respectively, with the label parameter
\texttt{Sign} of the first two instantiated by
\texttt{EpsilonLabel Func Pred}.
Its constructor is $(A,\Phi)\mapsto\langle A,\Phi\rangle$, also used as
\texttt{skolem\_index}. The simultaneous grammar above presents the same
finite data. The dependency and rank results apply to all raw codes,
independently of typing.
\end{remark}

\begin{definition}[Base terms and typeability, \coqterm{EpsilonFull.Base_Syntax.base_typing}]
\label{def:typed-base-terms}
In the base syntax of $\mathcal L_{\exists\varepsilon}$ over $\Sigma$, the judgement $I\vdash a:A$ is defined inductively by the following rules:
\[
\prfaxiom{I\vdash \mathrm{id}_I:I}
\qquad
\prftree
 {I\vdash f:J}
 {J\vdash g:A}
 {I\vdash g\circ f:A}
\qquad
\prfaxiom{I\vdash {!}_I:\mathbf 1},
\]
\[
\prftree
 {I\vdash f:A}
 {I\vdash g:B}
 {I\vdash\langle f,g\rangle:A\times B}
\qquad
\prfaxiom{A\times B\vdash\pi_1:A}
\qquad
\prfaxiom{A\times B\vdash\pi_2:B},
\]
\[
\prftree
 {u\in\mathcal F}
 {\operatorname{dom}(u)\vdash\widehat u:\operatorname{cod}(u)}
\qquad
\prftree
 {s\in\mathsf{Lab}_\Sigma^{\exists\varepsilon}}
 {I\vdash c_s^A:A}.
\]
This is the product fragment, without function types.
(Cf.~\cite[p.~139]{Jacobs1999}).
\end{definition}

\begin{remark}[Epsilon terms and labelled constants, \coqterm{EpsilonFull.Base_Syntax.BT_Const}]
For every raw $\Phi$ and every base context $I$, the distinguished constant
$\varepsilon_\Phi^A=c_{\langle A,\Phi\rangle}^A$ satisfies
$I\vdash\varepsilon_\Phi^A:A$. The grammar also contains $c_s^A$ for every
$s\in\mathsf{Lab}_\Sigma^{\exists\varepsilon}$ and every $A$. Thus, if
$s=\langle D,\Theta\rangle$, neither $D=A$ nor well-formedness of $\Theta$
is required for the raw constant $c_s^A$. This larger family is the base
syntax used in the formalization.
\end{remark}

\begin{definition}[Base-term equivalence, \coqterm{EpsilonFull.Base_Syntax.base_equiv}]
\label{def:base-term-equivalence}
For the base syntax of $\mathcal L_{\exists\varepsilon}$ over $\Sigma$, base-term equivalence is defined inductively as the least family of relations
$\equiv_{I,A}$ on well-typed terms $I\vdash f,g:A$ closed under reflexivity,
symmetry, transitivity, composition congruence, and pairing congruence, and
containing
\[
\begin{gathered}
h\circ(g\circ f)\equiv_{I,D}(h\circ g)\circ f,
\qquad
\mathrm{id}_A\circ f\equiv_{I,A}f
\equiv_{I,A}f\circ\mathrm{id}_I,\\
\pi_1\circ\langle f,g\rangle\equiv_{I,A}f,
\qquad
\pi_2\circ\langle f,g\rangle\equiv_{I,B}g,\\
\langle\pi_1\circ h,\pi_2\circ h\rangle
   \equiv_{I,A\times B}h,
\qquad
k\equiv_{I,\mathbf 1}{!}_I,\\
\langle f,g\rangle\circ h
   \equiv_{K,A\times B}\langle f\circ h,g\circ h\rangle.
\end{gathered}
\]
All equations are restricted to well-typed composites and pairs. For example, associativity uses $I\vdash f:J$, $J\vdash g:K$, and $K\vdash h:D$; the pairing equations use two arrows with a common domain, and the terminal equation uses $I\vdash k:\mathbf 1$. In cases where the domain and codomain are known, we omit the subscripts and instead write $\equiv$.
We also write $I\vdash a\equiv b:A$ for $a\equiv_{I,A}b$.
Its negation, $I\nvdash a\equiv b:A$, means that this base equation is not derivable.
(Cf.~\cite[p.~139]{Jacobs1999}).
\end{definition}

\begin{remark}[Named assumptions, \coqterm{EpsilonFull.Hyper_Syntax.tm_var}]
The Rocq development represents assumption positions by de Bruijn indices.
For readability, we write $x:\Phi\in\Gamma$ for the position labelled by
$x$.
\end{remark}

\begin{definition}[Well-formed formulae and contexts, \coqterm{EpsilonFull.Hyper_Syntax.WellFormed}, \coqterm{EpsilonFull.Hyper_Syntax.All_WellFormed}]
\label{def:formulae-contexts}
For $\mathcal L_{\exists\varepsilon}$ over $\Sigma$, the judgement
$I\vdash\Phi\;\mathsf{form}$ is defined inductively by
\[
\prfaxiom{I\vdash\top\;\mathsf{form}}
\qquad
\prfaxiom{I\vdash\bot\;\mathsf{form}}
\qquad
\prftree
 {R\in\mathcal R}
 {I\vdash t:\operatorname{ar}(R)}
 {I\vdash R(t)\;\mathsf{form}},
\]
\[
\prftree
 {I\vdash\Phi\;\mathsf{form}}
 {I\vdash\Psi\;\mathsf{form}}
 {I\vdash\Phi\mathbin{\star}\Psi\;\mathsf{form}}
\quad
(\star\in\{\land,\lor,\to\}),
\]
\[
\prftree
 {I\times A\vdash\Phi\;\mathsf{form}}
 {I\vdash\forall_A\Phi\;\mathsf{form}}
\qquad
\prftree
 {I\times A\vdash\Phi\;\mathsf{form}}
 {I\vdash\exists_A\Phi\;\mathsf{form}}.
\]
A logical context $\Gamma=(\Phi_1,\ldots,\Phi_n)$ is well formed over $I$
when every $I\vdash\Phi_k\;\mathsf{form}$. We then write
$I\vdash\Gamma\;\mathsf{ctx}$.
(Cf.~\cite[pp.~223--224]{Jacobs1999}).
\end{definition}

\begin{remark}[The two contexts, \coqterm{EpsilonFull.Hyper_Syntax.h_context}]
The base context $I$ contains individual variables, whereas $\Gamma$ contains
proof assumptions. Only the former is extended by a quantifier.
(Cf.~\cite[p.~224]{Jacobs1999}).
\end{remark}

\begin{remark}[Role of well-formedness, \coqterm{EpsilonFull.Hyper_Syntax_Lemmas.meta_ctx_wf}]
Well-formedness is required for base-term substitution and formula
conversion, the formal equality of formulae. It ensures coherent substitution
between fibres.
\end{remark}

\begin{example}[Formation under a binder, \coqterm{EpsilonFull.Examples.persons_under_binder_wellformed}]
In $\mathcal L_{\exists\varepsilon}$ over $\Sigma$, if $\operatorname{ar}(R)=A$ and $I\times B\vdash t:A$, then
\[
 I\times B\vdash R(t)\;\mathsf{form}
 \qquad\text{and}\qquad
 I\vdash\forall_B R(t)\;\mathsf{form}.
\]
Let \(S\) be the type of persons and let
\(\operatorname{ar}(R)=S\times(S\times S)\), where \(R(x,y,z)\)
means that \(z\) suspects that \(x\) loves \(y\). In the context
\(S\times(S\times S)\), define
\[
 y:=\pi_1,\qquad
 x:=\pi_1\circ\pi_2,\qquad
 z:=\pi_2\circ\pi_2,
 \qquad
 t:=\langle x,\langle y,z\rangle\rangle.
\]
Then
\[
 S\vdash\forall_{S\times S}R(t)\;\mathsf{form}.
\]
The formula says that, everybody suspects that everybody loves \(y\).
\end{example}

\subsection{Syntactic substitution and the intensional
\texorpdfstring{$\varepsilon$}{epsilon}-laws}

\begin{definition}[Syntactic substitution, \coqterm{EpsilonFull.Hyper_Syntax.form_subst}, \coqterm{EpsilonFull.Hyper_Syntax.ctx_subst}]
\label{def:syntactic-substitution}
In $\mathcal L_{\exists\varepsilon}$ over $\Sigma$, let $I,J,A\in\mathsf{BType}$ and $I\vdash f:J$. Put
\[
 I\times A\vdash
 f_A^+:=\langle f\circ\pi_1,\pi_2\rangle:J\times A.
\]
Syntactic substitution of raw formulae along $f$ is defined by structural
recursion. The well-formed roles of the terms and formulae are recorded on
the right:
\[
\begin{array}{rcll}
 \top[f]&=&\top,\\
 \bot[f]&=&\bot,\\
 R(t)[f]&=&R(t\circ f)
   &\bigl(J\vdash t:\operatorname{ar}(R)\bigr),\\
 (\Phi\mathbin{\star}\Psi)[f]
   &=&\Phi[f]\mathbin{\star}\Psi[f]
   &\bigl(J\vdash\Phi\;\mathsf{form},\
          \ J\vdash\Psi\;\mathsf{form},\
          \ \star\in\{\land,\lor,\to\}\bigr),\\
 (\forall_A\Phi)[f]&=&\forall_A(\Phi[f_A^+])
   &\bigl(J\times A\vdash\Phi\;\mathsf{form}\bigr),\\
 (\exists_A\Phi)[f]&=&\exists_A(\Phi[f_A^+])
   &\bigl(J\times A\vdash\Phi\;\mathsf{form}\bigr).
\end{array}
\]
If $J\vdash\Gamma\;\mathsf{ctx}$, then $\Gamma[f]$ is defined
componentwise and is a logical context over $I$.
(Cf.~\cite[pp.~223, 226]{Jacobs1999}).
\end{definition}

\begin{notation}[Instantiation, \coqterm{EpsilonFull.Hyper_Syntax.form_subst}]
\label{not:formula-instantiation}
In $\mathcal L_{\exists\varepsilon}$ over $\Sigma$, for $I,A\in\mathsf{BType}$, $I\vdash a:A$, and
$I\times A\vdash\Phi\;\mathsf{form}$, write
\[
 \Phi[a]:=\Phi[\langle\mathrm{id}_I,a\rangle].
\]
\end{notation}

\begin{remark}[Substitution by a base term, \coqterm{EpsilonFull.Hyper_Syntax.b_subst_ext}]
A base term $I\vdash f:J$ is a substitution for the free individual
variables of a formula over $J$. The lift $f_A^+$ leaves the bound $A$-variable
unchanged. Syntactic substitution is used below in the quantifier rules,
formula conversion, proof-term substitution, and later is interpreted by
the pullback functor $P(\sem{f})$.
(Cf.~\cite[pp.~48--49, 226]{Jacobs1999}).
\end{remark}

\begin{example}[Free and bound variables, \coqterm{EpsilonFull.Examples.free_bound_projection_equation}]
In $\mathcal L_{\exists\varepsilon}$ over $\Sigma$, let $I,J,A\in\mathsf{BType}$, $J\vdash f:I$, and
$I\times A\vdash\Theta\;\mathsf{form}$. Then
\[
\prftree
 {J\times A\vdash f_A^+:I\times A}
 {I\times A\vdash\Theta\;\mathsf{form}}
 {J\times A\vdash\Theta[f_A^+]\;\mathsf{form}},
\qquad
 (\forall_A\Theta)[f]=\forall_A(\Theta[f_A^+]).
\]
Writing $I\times A\vdash\bar\pi_I:A$ and
$J\times A\vdash\bar\pi_J:A$ for the second projections,
\[
 \bar\pi_I\circ f_A^+
 \equiv_{J\times A,A}\bar\pi_J.
\]
\end{example}

\begin{definition}[Formula conversion, \coqterm{EpsilonFull.Hyper_Syntax.form_equiv}]
\label{def:formula-conversion}
In $\mathrm{iFOL}_{\exists\varepsilon}$ over $\Sigma$, for each base context $I$, formula conversion $\Phi\simeq_I\Psi$ is defined
inductively as the least relation on raw formulae with reflexivity restricted
to well-formed formulae. It is closed under symmetry, transitivity, and
congruence for all connectives and quantifiers. Its atomic clause requires $R\in\mathcal R$,
$I\vdash t,u:\operatorname{ar}(R)$, and
$t\equiv_{I,\operatorname{ar}(R)}u$, and concludes
$R(t)\simeq_I R(u)$. Its substitution clauses are
\[
 \Phi[\mathrm{id}_I]\simeq_I\Phi
 \quad(I\vdash\Phi\;\mathsf{form}),
\]
\[
 (\Phi[f])[g]\simeq_I\Phi[f\circ g]
 \quad(K\vdash\Phi\;\mathsf{form},\ J\vdash f:K,\ I\vdash g:J),
\]
\[
 f\equiv_{I,J}g\Longrightarrow\Phi[f]\simeq_I\Phi[g]
 \quad(J\vdash\Phi\;\mathsf{form}).
\]
For $I\times A\vdash\Phi\;\mathsf{form}$ it contains
\[
 \exists_A\Phi\simeq_I\Phi[\varepsilon_\Phi],
\tag{$\varepsilon\Sigma$}
\]
and for $J\times A\vdash\Theta\;\mathsf{form}$ and $I\vdash f:J$ it contains
\[
 \Theta[f_A^+][\varepsilon_{\Theta[f_A^+]}]
 \simeq_I
 \Theta[\varepsilon_\Theta][f].
\tag{$\varepsilon\mathrm{BC}$}
\]
\end{definition}

\begin{definition}[Context conversion, \coqterm{EpsilonFull.Hyper_Syntax.ctx_equiv}]
\label{def:context-conversion}
In $\mathrm{iFOL}_{\exists\varepsilon}$ over $\Sigma$, context conversion for two lists of formulae over the same base context $I$ is defined componentwise:
\[
 (\Phi_1,\ldots,\Phi_n)\simeq_I(\Psi_1,\ldots,\Psi_n)
 \quad\Longleftrightarrow\quad
 \Phi_k\simeq_I\Psi_k\quad(1\leq k\leq n).
\]
\end{definition}

\begin{remark}[The role of formula conversion, \coqterm{EpsilonFull_Completeness.h_derivation_subst_general}]
Raw syntactic substitution is neither strictly unital nor strictly
associative, and equal base substitutions need not produce identical raw
formulae. Conversion makes these identifications available to the typing
rules. Its $\varepsilon$-laws
are used in Proposition~\ref{prop:derivation-substitution}, interpreted as
isomorphisms in soundness, and built into the syntactic model for
completeness.
\end{remark}

\begin{example}[Conversion along equal witnesses, \coqterm{EpsilonFull.Examples.conversion_along_equal_witnesses}]
In $\mathrm{iFOL}_{\exists\varepsilon}$ over $\Sigma$, let $I,A\in\mathsf{BType}$. If $I\vdash a,b:A$,
$a\equiv_{I,A}b$, and
$I\times A\vdash\Phi\;\mathsf{form}$, then
\[
\prftree[r]{$\scriptstyle\mathrm{pullback\mbox{-}cong}$}
 {
  \prftree[r]{$\scriptstyle\mathrm{pair\mbox{-}cong}$}
   {a\equiv_{I,A}b}
   {\langle\mathrm{id}_I,a\rangle
    \equiv_{I,I\times A}
    \langle\mathrm{id}_I,b\rangle}
 }
 {\Phi[a]\simeq_I\Phi[b]}.
\]
\end{example}

\begin{example}[Constant substitution and $\varepsilon$, \coqterm{EpsilonFull.Examples.projection_epsilon_beck_chevalley}]
In $\mathrm{iFOL}_{\exists\varepsilon}$ over $\Sigma$, let $I,A,B\in\mathsf{BType}$ and
$I\times A\vdash\Phi\;\mathsf{form}$. Put $K:=I\times B$ and write
\[
 K\vdash\rho:=\pi_1:I,
 \qquad
 K\times A\vdash
 \rho_A^+=\langle\rho\circ\pi_1,\pi_2\rangle:I\times A.
\]
The two $\varepsilon$-instances over $K$ are
\[
 \Phi[\rho_A^+][\varepsilon_{\Phi[\rho_A^+]}]
 \qquad\text{and}\qquad
 \Phi[\varepsilon_\Phi][\rho].
\]
The second formula is pulled back along the projection
$\rho:I\times B\to I$, hence is constant in the new $B$-coordinate. The
Beck--Chevalley clause gives
\[
 \Phi[\rho_A^+]
   [\varepsilon_{\Phi[\rho_A^+]}]
 \simeq_K
 \Phi[\varepsilon_\Phi][\rho].
\tag{$\varepsilon\mathrm{BC}_\rho$}
\]
\end{example}

\begin{remark}[Intensionality, \coqterm{EpsilonFullSubstitutivity.separating_model_refutes_substitutivity}]
In general,
\[
 \Phi\simeq_{I\times A}\Psi
 \quad\not\Longrightarrow\quad
 \varepsilon_\Phi\equiv_{I,A}\varepsilon_\Psi.
\]
Only the selected instances are identified. In particular,
$(\varepsilon\mathrm{BC}_\rho)$ imposes no equation
\[
 \varepsilon_{\Phi[\rho_A^+]}
 \equiv_{K,A}\varepsilon_\Phi\circ\rho.
\]
Adding the general implication would make $\varepsilon$ extensional with
respect to definitional formula equivalence. This is distinct from
$\varepsilon$-extensionality formulated with an internal equality predicate.
Concrete countermodels to extensionality and to general witness substitutivity are given in
Section~\ref{sec:witness-choice-substitution}.
(Cf.~\cite{Molnar2013}).
\end{remark}

\begin{remark}[The roles of the $\varepsilon$-laws, \coqterm{EpsilonFull.Hyper_Syntax.FE_Eps_Adjoint_Iso}]
The clause $(\varepsilon\Sigma)$ yields the comparison with existential
quantification. The clause $(\varepsilon\mathrm{BC})$ makes the selected
instance stable under syntactic substitution and is used in conversion
stability and in the $\varepsilon$-cases of
Proposition~\ref{prop:derivation-substitution}. Without it,
$\Phi[f_A^+][\varepsilon_{\Phi[f_A^+]}]$ and
$\Phi[\varepsilon_\Phi][f]$ may contain different labels.
\end{remark}

\begin{proposition}[Regularity and stability of conversion, \coqterm{EpsilonFull.Hyper_Syntax_Lemmas.form_equiv_wf}, \coqterm{EpsilonFull.Hyper_Syntax_Lemmas.form_equiv_subst}]
\label{prop:conversion-stability}
For $\mathrm{iFOL}_{\exists\varepsilon}$ over a fixed typed signature $\Sigma$, formula conversion is regular:
\[
 \Phi\simeq_I\Psi
 \quad\Longrightarrow\quad
 I\vdash\Phi\;\mathsf{form}
 \quad\text{and}\quad
 I\vdash\Psi\;\mathsf{form}.
\]
It is stable under syntactic substitution. For $I\vdash f:J$,
\[
 \Phi\simeq_J\Psi
 \quad\Longrightarrow\quad
 \Phi[f]\simeq_I\Psi[f].
\]
\end{proposition}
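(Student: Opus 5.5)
Both claims are proved by induction on the derivation of $\Phi\simeq_I\Psi$, after two auxiliary lemmas about well-formedness. First, weakening along substitution: if $J\vdash\Phi\;\mathsf{form}$ and $I\vdash f:J$, then $I\vdash\Phi[f]\;\mathsf{form}$. This goes by induction on the formation derivation of $\Phi$, generalised over $I$ and $f$. The atomic case uses the composition typing rule for $t\circ f$. The quantifier cases use that $I\times A\vdash f_A^+:J\times A$, which is built from $\pi_1$, $\pi_2$, composition and pairing. Second, I will need typing of epsilon terms, which is immediate from the constant rule: $I\vdash\varepsilon_\Phi^A:A$ for any raw $\Phi$. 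Hence $\langle\mathrm{id}_I,\varepsilon_\Phi\rangle$ is a well-typed substitution $I\to I\times A$.

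\textbf{Regularity.} I go by induction on the conversion derivation. Reflexivity carries well-formedness as a premise, and symmetry and transitivity are trivial. For congruences, I invert the conclusion into the premises' pieces and reassemble with the formation rules. For quantifiers the premise lives over $I\times A$. The atomic clause follows from the typings $I\vdash t,u:\operatorname{ar}(R)$ given in its premises. The clause $\Phi[\mathrm{id}_I]\simeq_I\Phi$ and the two substitution clauses follow from the first lemma, together with the typing of $f\circ g$ and the hypothesis $J\vdash\Phi\;\mathsf{form}$. For $(\varepsilon\Sigma)$ we have $I\vdash\exists_A\Phi\;\mathsf{form}$ directly, and $\Phi[\varepsilon_\Phi]$ is well formed by the lemma applied along $\langle\mathrm{id}_I,\varepsilon_\Phi\rangle$. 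For $(\varepsilon\mathrm{BC})$ I apply the lemma twice on each side: along $f_A^+$ and then $\langle\mathrm{id}_I,\varepsilon\rangle$ on the left, and along $\langle\mathrm{id}_J,\varepsilon_\Theta\rangle$ and then $f$ on the right. One point needs care. Congruence clauses must carry enough typing information, for example the base type $A$ of a quantifier and the fact that the context index is shared, for the inversion to go through. If the Rocq definition indexes conversion by $I$, this is automatic.

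\textbf{Stability.} Again I go by induction on $\Phi\simeq_J\Psi$, generalising over $I$ and $f$. This generalisation is essential, because under a quantifier the induction hypothesis is needed at $J\times A$ with the lifted substitution $f_A^+$.
\begin{itemize}
\item Reflexivity uses the first lemma, and symmetry, transitivity and the connective congruences are immediate.
\item The quantifier congruence uses the induction hypothesis at $f_A^+$.
\item The atomic clause needs $t\circ f\equiv u\circ f$, which is composition congruence.
\item The substitution clauses are handled by rewriting with the clauses themselves. For instance $(\Phi[g])[f]\simeq\Phi[g\circ f]$ gives $(\Phi[\mathrm{id}])[f]\simeq\Phi[\mathrm{id}\circ f]\simeq\Phi[f]$, using $\mathrm{id}\circ f\equiv f$ and the congruence clause. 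Associativity is treated the same way, via associativity of $\equiv$ and two uses of the composition clause. For the clause $g\equiv h\Rightarrow\Phi[g]\simeq\Phi[h]$, I convert $\Phi[g][f]\simeq\Phi[g\circ f]\simeq\Phi[h\circ f]\simeq\Phi[h][f]$.
\item For $(\varepsilon\Sigma)$, the substituted left side is $\exists_A(\Phi[f_A^+])$, which converts by $(\varepsilon\Sigma)$ to $\Phi[f_A^+][\varepsilon_{\Phi[f_A^+]}]$, and then by $(\varepsilon\mathrm{BC})$ to $\Phi[\varepsilon_\Phi][f]$, as required.
\end{itemize}

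\textbf{Main obstacle: the $(\varepsilon\mathrm{BC})$ case.} We must show
\[
\Theta[g_A^+][\varepsilon_{\Theta[g_A^+]}][f]\simeq_I\Theta[\varepsilon_\Theta][g][f].
\]
I would route both sides through $\Theta[\varepsilon_\Theta][g\circ f]$. The right side reaches it by the composition clause. The left side reaches it in four steps:
\begin{enumerate}
\item Apply $(\varepsilon\mathrm{BC})$ for $\Theta[g_A^+]$ along $f$, reaching $\Theta[g_A^+][f_A^+][\varepsilon_{\Theta[g_A^+][f_A^+]}]$.
\item Convert this to $\Theta[(g\circ f)_A^+][\varepsilon_{\Theta[(g\circ f)_A^+]}]$ using $g_A^+\circ f_A^+\equiv(g\circ f)_A^+$, a base-equation calculation.
\item Finish with $(\varepsilon\mathrm{BC})$ along $g\circ f$.
\end{enumerate}
The delicate part is step 2, since the epsilon labels differ syntactically. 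Conversion does not identify $\varepsilon_{\Psi}$ with $\varepsilon_{\Psi'}$ for $\Psi\simeq\Psi'$, so one cannot just rewrite under the label. Instead I plan to go through $(\varepsilon\Sigma)$ in both directions: $\Psi[\varepsilon_\Psi]\simeq\exists_A\Psi\simeq\exists_A\Psi'\simeq\Psi'[\varepsilon_{\Psi'}]$, using quantifier congruence in the middle step. This trick shows that selected instances are invariant under conversion of the indexing formula, and it is the crux of the whole argument.
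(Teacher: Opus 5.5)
Your proposal is correct and takes the same route as the paper: induction on the conversion derivation, generalised over the substitution, with the $\varepsilon$-cases discharged via $(\varepsilon\mathrm{BC})$; the paper states this only as a two-line sketch. Your treatment of the $(\varepsilon\mathrm{BC})$ case through $(\varepsilon\Sigma)$ and $\exists$-congruence checks out, and it is the derived $\varepsilon$-functoriality the paper points to in its appendix remark (one small slip: the ``four steps'' you announce are actually three).
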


\begin{proof}
Both statements are proved by induction on the conversion derivation. The
$\varepsilon$-cases use $(\varepsilon\mathrm{BC})$.
\end{proof}

\subsection{Proof terms and derivability}

\begin{definition}[Proof terms, \coqterm{EpsilonFull.Hyper_Syntax.h_tm}]
\label{def:proof-terms}
Raw proof terms of $\mathrm{iFOL}_{\exists\varepsilon}$ over $\Sigma$ are defined inductively by
\[
\begin{aligned}
 p,q,r ::= {}&x\mid\mathsf{tt}\mid\lambda x.p\mid pq
 \mid\langle p,q\rangle\mid\mathsf{fst}(p)\mid\mathsf{snd}(p)\\
 &\mid\mathsf{inl}(p)\mid\mathsf{inr}(p)
 \mid\mathsf{case}(p,x.q,y.r)\mid\mathsf{abort}_{\Phi}(p)\\
 &\mid\mathsf{allI}(p)\mid\mathsf{allE}(p,a)
 \mid\mathsf{exI}(a,p)\mid\mathsf{exE}(p,u.q)\\
 &\mid\mathsf{epsI}(a,p)\mid\mathsf{epsE}(p,u.q).
\end{aligned}
\]
Here $a\in\mathsf{BTerm}_\Sigma^{\exists\varepsilon}$. The raw syntax for abstraction and
injections also carries the formula annotations determined by the typing
rules. These annotations are suppressed in the surface notation.
(Cf.~\cite{Prawitz1965}).
\end{definition}

\begin{definition}[Derivability, \coqterm{EpsilonFull.Hyper_Syntax.h_derivation}]
\label{def:proof-term-calculus}
For $\mathrm{iFOL}_{\exists\varepsilon}$ over $\Sigma$, the derivability judgement
\[
 I\mid\Gamma\vdash p:\Phi
\]
is defined inductively by the following rules. Every formula and logical
context displayed in a rule is required to be well formed in its displayed base context. In every quantifier or epsilon rule, $I\times A\vdash\Phi\;\mathsf{form}$ is required; $\Gamma$ and any elimination target $\Psi$ are well formed over $I$.

\medskip
\noindent\emph{Variables, truth, and falsity:}
\[
\prftree
 {x:\Phi\in\Gamma}
 {I\mid\Gamma\vdash x:\Phi}
\qquad
\prfaxiom{I\mid\Gamma\vdash\mathsf{tt}:\top}
\qquad
\prftree
 {I\mid\Gamma\vdash p:\bot}
 {I\mid\Gamma\vdash\mathsf{abort}_\Phi(p):\Phi}.
\]

\noindent\emph{Implication and conjunction:}
\[
\prftree
 {I\mid x:\Phi,\Gamma\vdash p:\Psi}
 {I\mid\Gamma\vdash\lambda x.p:\Phi\to\Psi}
\qquad
\prftree
 {I\mid\Gamma\vdash p:\Phi\to\Psi}
 {I\mid\Gamma\vdash q:\Phi}
 {I\mid\Gamma\vdash pq:\Psi},
\]
\[
\prftree
 {I\mid\Gamma\vdash p:\Phi}
 {I\mid\Gamma\vdash q:\Psi}
 {I\mid\Gamma\vdash\langle p,q\rangle:\Phi\land\Psi}
\qquad
\prftree
 {I\mid\Gamma\vdash p:\Phi\land\Psi}
 {I\mid\Gamma\vdash\mathsf{fst}(p):\Phi}
\qquad
\prftree
 {I\mid\Gamma\vdash p:\Phi\land\Psi}
 {I\mid\Gamma\vdash\mathsf{snd}(p):\Psi}.
\]

\noindent\emph{Disjunction:}
\[
\prftree
 {I\mid\Gamma\vdash p:\Phi}
 {I\mid\Gamma\vdash\mathsf{inl}(p):\Phi\lor\Psi}
\qquad
\prftree
 {I\mid\Gamma\vdash p:\Psi}
 {I\mid\Gamma\vdash\mathsf{inr}(p):\Phi\lor\Psi},
\]
\[
\prftree
 {I\mid\Gamma\vdash p:\Phi\lor\Psi}
 {I\mid x:\Phi,\Gamma\vdash q:\Xi}
 {I\mid y:\Psi,\Gamma\vdash r:\Xi}
 {I\mid\Gamma\vdash\mathsf{case}(p,x.q,y.r):\Xi}.
\]

\noindent\emph{Universal quantification:}
\[
\prftree
 {I\times A\mid\Gamma[\pi_1]\vdash p:\Phi}
 {I\mid\Gamma\vdash\mathsf{allI}(p):\forall_A\Phi}
\qquad
\prftree
 {I\mid\Gamma\vdash p:\forall_A\Phi}
 {I\vdash a:A}
 {I\mid\Gamma\vdash\mathsf{allE}(p,a):\Phi[a]}.
\]

\noindent\emph{Existential quantification:}
\[
\prftree
 {I\vdash a:A}
 {I\mid\Gamma\vdash p:\Phi[a]}
 {I\mid\Gamma\vdash\mathsf{exI}(a,p):\exists_A\Phi},
\]
\[
\prftree
 {I\mid\Gamma\vdash p:\exists_A\Phi}
 {I\times A\mid u:\Phi,\Gamma[\pi_1]\vdash q:\Psi[\pi_1]}
 {I\mid\Gamma\vdash\mathsf{exE}(p,u.q):\Psi}.
\]

\noindent\emph{Intensional $\varepsilon$:}
\[
\prftree
 {I\vdash a:A}
 {I\mid\Gamma\vdash p:\Phi[a]}
 {I\mid\Gamma\vdash\mathsf{epsI}(a,p):
  \Phi[\varepsilon_\Phi]},
\]
\[
\prftree
 {I\mid\Gamma\vdash p:\Phi[\varepsilon_\Phi]}
 {I\times A\mid u:\Phi,\Gamma[\pi_1]\vdash q:\Psi[\pi_1]}
 {I\mid\Gamma\vdash\mathsf{epsE}(p,u.q):\Psi}.
\]

\noindent\emph{Conversion:}
\[
\prftree
 {I\mid\Gamma\vdash p:\Phi}
 {\Phi\simeq_I\Psi}
 {I\mid\Gamma\vdash p:\Psi}
\qquad
\prftree
 {\Gamma\simeq_I\Delta}
 {I\mid\Gamma\vdash p:\Phi}
 {I\mid\Delta\vdash p:\Phi}.
\]
(Cf.~\cite{Prawitz1965}).
\end{definition}

\begin{remark}[Conversion and explicit $\varepsilon$-terms, \coqterm{EpsilonFull.Hyper_Syntax.FE_Eps_Adjoint_Iso}]
The clause $(\varepsilon\Sigma)$ places $\exists_A\Phi$ and
$\Phi[\varepsilon_\Phi]$ in the same conversion class. The rules
$\mathsf{epsI}$ and $\mathsf{epsE}$ retain explicit proof terms for its two
directions.
\end{remark}

\begin{definition}[Syntactic substitution of proof terms, \coqterm{EpsilonFull.Hyper_Syntax.tm_subst}]
\label{def:proof-term-substitution}
In $\mathrm{iFOL}_{\exists\varepsilon}$ over $\Sigma$, for $I\vdash f:J$, proof-term substitution $p\mapsto p[f]$ is defined by
structural recursion. It acts homomorphically on the propositional
term-formers and substitutes in every formula annotation. The quantifier and
$\varepsilon$ clauses are
\[
\begin{aligned}
 \mathsf{allI}(p)[f]
   &=\mathsf{allI}(p[f_A^+]),
&\mathsf{allE}(p,a)[f]
   &=\mathsf{allE}(p[f],a\circ f),\\
 \mathsf{exI}(a,p)[f]
   &=\mathsf{exI}(a\circ f,p[f]),
&\mathsf{exE}(p,u.q)[f]
   &=\mathsf{exE}(p[f],u.q[f_A^+]),\\
 \mathsf{epsI}(a,p)[f]
   &=\mathsf{epsI}(a\circ f,p[f]),
&\mathsf{epsE}(p,u.q)[f]
   &=\mathsf{epsE}(p[f],u.q[f_A^+]).
\end{aligned}
\]
\end{definition}

\begin{remark}[The role of proof-term substitution, \coqterm{EpsilonFull.Hyper_Syntax.tm_subst}]
The operation transports an entire derivation along a base substitution. It
is the term component of Proposition~\ref{prop:derivation-substitution} and
is interpreted by applying semantic substitution to the corresponding fibre
morphism.
\end{remark}

\begin{proposition}[Substitution, \coqterm{EpsilonFull_Completeness.h_derivation_subst_general}]
\label{prop:derivation-substitution}
In $\mathrm{iFOL}_{\exists\varepsilon}$ over any typed signature $\Sigma$, for every $J\vdash f:I$,
\[
 I\mid\Gamma\vdash p:\Phi
 \quad\Longrightarrow\quad
 J\mid\Gamma[f]\vdash p[f]:\Phi[f].
\]
\end{proposition}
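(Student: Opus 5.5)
We prove the statement by induction on the derivation of $I\mid\Gamma\vdash p:\Phi$, generalising over $J$ and $f$ so that the induction hypothesis can be applied at the lifted substitution $f_A^+:J\times A\to I\times A$ in the binder cases. Well-formedness of the substituted formulae and contexts is discharged throughout by an auxiliary lemma. That lemma states that $I\vdash\Phi\;\mathsf{form}$ and $J\vdash f:I$ give $J\vdash\Phi[f]\;\mathsf{form}$, and it is proved by induction on $\Phi$ using the typing rules for $\circ$ and $f_A^+$. Substitution also commutes with list membership, so $x:\Phi\in\Gamma$ yields $x:\Phi[f]\in\Gamma[f]$.

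The propositional cases (variables, $\mathsf{tt}$, $\mathsf{abort}$, $\lambda$, application, pairs, injections, $\mathsf{case}$) are immediate. Since $(\Phi\star\Psi)[f]=\Phi[f]\star\Psi[f]$ and $(x:\Phi,\Gamma)[f]=(x:\Phi[f],\Gamma[f])$ hold by definition, we apply the induction hypothesis and the same rule. The two conversion rules use Proposition~\ref{prop:conversion-stability}: from $\Phi\simeq_I\Psi$ we get $\Phi[f]\simeq_J\Psi[f]$, and componentwise the same holds for contexts.

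The quantifier and $\varepsilon$ cases require \emph{coherence conversions}, which we assemble from the substitution clauses of Definition~\ref{def:formula-conversion}. The first needed fact is
\[
 \Gamma[\pi_1][f_A^+]\simeq_{J\times A}\Gamma[f][\pi_1],
\]
which follows from the composition clause and the pullback-congruence clause, because $\pi_1\circ f_A^+\equiv f\circ\pi_1$. With it, $\mathsf{allI}$ follows from the induction hypothesis at $f_A^+$ followed by context conversion. The same fact handles the minor premises of $\mathsf{exE}$ and $\mathsf{epsE}$, where we also need $\Psi[\pi_1][f_A^+]\simeq\Psi[f][\pi_1]$. For $\mathsf{allE}$ and $\mathsf{exI}$ we need
\[
 \Phi[a][f]\simeq_J\Phi[f_A^+][a\circ f],
\]
and both sides reduce to pullbacks along $\langle\mathrm{id}_I,a\rangle\circ f\equiv f_A^+\circ\langle\mathrm{id}_J,a\circ f\rangle$. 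This base equation is checked with the pairing laws of Definition~\ref{def:base-term-equivalence}. In $\mathsf{exI}$ we use this conversion on the premise; in $\mathsf{allE}$ we use it on the conclusion.

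The main obstacle is the $\varepsilon$-rules, since the substituted term must be typed at $\Phi[f_A^+][\varepsilon_{\Phi[f_A^+]}]$, whereas the target is $\Phi[\varepsilon_\Phi][f]$. For $\mathsf{epsI}(a,p)$ we proceed in three steps. First, the induction hypothesis gives $J\mid\Gamma[f]\vdash p[f]:\Phi[a][f]$. Second, we convert this to $\Phi[f_A^+][a\circ f]$ as above. Third, we apply $\mathsf{epsI}$ with the formula $\Phi[f_A^+]$ over $J\times A$ and obtain type $\Phi[f_A^+][\varepsilon_{\Phi[f_A^+]}]$. Finally, $(\varepsilon\mathrm{BC})$ converts this to $\Phi[\varepsilon_\Phi][f]$, and this step is precisely why that clause is in the calculus.

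Dually, for $\mathsf{epsE}(p,u.q)$ we run $(\varepsilon\mathrm{BC})$ backwards on the induction hypothesis for $p$, which yields $p[f]:\Phi[f_A^+][\varepsilon_{\Phi[f_A^+]}]$. We then combine it with the induction hypothesis for $q$ at $f_A^+$, adjusted by the coherence conversions above, and apply $\mathsf{epsE}$. In all cases, the only care needed is to check that every conversion is applied between well-formed formulae, as the reflexivity clause demands. This is ensured by regularity in Proposition~\ref{prop:conversion-stability} together with the formation lemma.
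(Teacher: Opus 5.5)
Your proposal is correct and follows the same route as the paper: induction on the derivation, generalised over the substitution so that the binder cases can use $f_A^+$. The paper's one-line proof leaves the cases implicit; yours spells them out, using Proposition~\ref{prop:conversion-stability} for the two conversion rules and $(\varepsilon\mathrm{BC})$ for the $\varepsilon$-cases, which is exactly the role the paper assigns to that clause.
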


\begin{proof}
Induction on the derivation.
\end{proof}

\begin{proposition}[Fiber trip, \coqterm{EpsilonFull.DualDrinker_Syntax.diagonal_base_equation}, \coqterm{EpsilonFull.DualDrinker_Syntax.FE_Pullback_Ext_Fst_Snd}]
\label{prop:diagonal-calculation}
Work in $\mathrm{iFOL}_{\exists\varepsilon}$ over $\Sigma$. Let $I,A\in\mathsf{BType}$, $J:=I\times A$, and $K:=(I\times A)\times A$.
Consider
$K\vdash\rho_A^+:=\langle\pi_1^J\circ\pi_1^K,\pi_2^K\rangle:J$
and
$J\vdash s_{\pi_2^J}:=\langle\mathrm{id}_J,\pi_2^J\rangle:K$.
Then, for every $J\vdash\Phi\;\mathsf{form}$,
\[
 \rho_A^+\circ s_{\pi_2^J}\equiv_{J,J}\mathrm{id}_J
 \quad\textup{(T)},
 \qquad
 \Phi[\rho_A^+][\pi_2^J]\simeq_J\Phi
 \quad\textup{(P)}.
\]
(Cf.~\cite[p.~139]{Jacobs1999}).
\end{proposition}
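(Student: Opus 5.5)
Part (T) is a purely equational computation in the base-term equivalence of Definition~\ref{def:base-term-equivalence}, and part (P) then follows from (T) by the substitution clauses of formula conversion (Definition~\ref{def:formula-conversion}). For (T), we first distribute the composite over the pair, using $\langle f,g\rangle\circ h\equiv\langle f\circ h,g\circ h\rangle$:
\[
 \rho_A^+\circ s_{\pi_2^J}
 \equiv_{J,J}
 \bigl\langle(\pi_1^J\circ\pi_1^K)\circ s_{\pi_2^J},\ \pi_2^K\circ s_{\pi_2^J}\bigr\rangle .
\]
In the first component we reassociate to $\pi_1^J\circ(\pi_1^K\circ\langle\mathrm{id}_J,\pi_2^J\rangle)$. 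The $\pi_1$-$\beta$ law reduces the inner composite to $\mathrm{id}_J$, and the right unit law then yields $\pi_1^J$. In the second component the $\pi_2$-$\beta$ law gives $\pi_2^J$ directly. Pairing congruence and transitivity therefore give $\rho_A^+\circ s_{\pi_2^J}\equiv_{J,J}\langle\pi_1^J,\pi_2^J\rangle$. Finally, the surjective-pairing law $\langle\pi_1\circ h,\pi_2\circ h\rangle\equiv h$, taken at $h=\mathrm{id}_J$ and combined with the right unit law inside each component via pairing congruence, gives $\langle\pi_1^J,\pi_2^J\rangle\equiv_{J,J}\mathrm{id}_J$. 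Throughout, we only need to check that every intermediate term is well typed in $J\vdash -:J$ or in the relevant hom, which is immediate from Definition~\ref{def:typed-base-terms}.

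For (P), Notation~\ref{not:formula-instantiation} says that $\Phi[\rho_A^+][\pi_2^J]$ means $(\Phi[\rho_A^+])[\langle\mathrm{id}_J,\pi_2^J\rangle]=(\Phi[\rho_A^+])[s_{\pi_2^J}]$. We chain four conversions. The composition clause $(\Phi[f])[g]\simeq\Phi[f\circ g]$, applied with $K=J$, $f=\rho_A^+$, $g=s_{\pi_2^J}$, gives $\Phi[\rho_A^+][s_{\pi_2^J}]\simeq_J\Phi[\rho_A^+\circ s_{\pi_2^J}]$. The congruence clause $f\equiv g\Rightarrow\Phi[f]\simeq\Phi[g]$, fed with (T), gives $\Phi[\rho_A^+\circ s_{\pi_2^J}]\simeq_J\Phi[\mathrm{id}_J]$. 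The unit clause gives $\Phi[\mathrm{id}_J]\simeq_J\Phi$. Transitivity of $\simeq_J$ combines these into (P).

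The only side conditions are the typing hypotheses of these clauses: $J\vdash\Phi\;\mathsf{form}$ (given), $K\vdash\rho_A^+:J$, and $J\vdash s_{\pi_2^J}:K$. The last two follow from the typing rules for pairing, projections, and composition.

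I expect the main obstacle to be (T). The congruence rules give no direct surjective pairing for $\mathrm{id}$, so the step from $\langle\pi_1^J,\pi_2^J\rangle$ to $\mathrm{id}_J$ has to be routed through the $\eta$-law at $h=\mathrm{id}_J$ together with the unit law. We also have to track the implicit typing indices carefully, since $\pi_1^K$ has codomain $J$ while $\pi_1^J$ has codomain $I$.
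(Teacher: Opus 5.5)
Your proposal is correct and follows the paper's proof step by step. For (T) you use the same distribution, reassociation, and $\beta$/unit computations, and for (P) the same chain of composition, congruence fed by (T), and identity conversions, reading $[\pi_2^J]$ as $[s_{\pi_2^J}]$ just as the paper's diagram does. Your only addition is routing $\langle\pi_1^J,\pi_2^J\rangle\equiv\mathrm{id}_J$ through the $\eta$-law at $h=\mathrm{id}_J$ together with the unit law, which spells out a step the paper leaves implicit.
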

\begin{proof}
\leavevmode\par
\noindent
\begin{minipage}[t]{0.64\linewidth}
\centering
\(\displaystyle
\begin{aligned}
 \rho_A^+\circ s_{\pi_2^J}
 &=\langle\pi_1^J\circ\pi_1^K,\pi_2^K\rangle
   \circ s_{\pi_2^J}\\
 &\equiv
 \langle\pi_1^J\circ(\pi_1^K\circ s_{\pi_2^J}),
        \pi_2^K\circ s_{\pi_2^J}\rangle\\
 &=\langle
 \pi_1^J\circ(\pi_1^K\circ
   \langle\mathrm{id}_J,\pi_2^J\rangle),
 \pi_2^K\circ\langle\mathrm{id}_J,\pi_2^J\rangle
 \rangle\\
 &\equiv
 \langle\pi_1^J\circ\mathrm{id}_J,\pi_2^J\rangle\\
 &\equiv\langle\pi_1^J,\pi_2^J\rangle
 \equiv\mathrm{id}_J.
\end{aligned}
\)
\end{minipage}\hfill
\begin{minipage}[t]{0.33\linewidth}
\centering
\begin{tikzcd}[ampersand replacement=\&, column sep=6em, row sep=4em]
 \Phi
   \arrow[r, bend left=18, mapsto, "{[\rho_A^+]}"]
 \& \Phi[\rho_A^+]
   \arrow[l, bend left=18, mapsto,
     "{[s_{\pi_2^J}]=[\pi_2^J]}"]
 \\
 J
   \arrow[r, bend left=18, "s_{\pi_2^J}"]
 \& K
   \arrow[l, bend left=18, "\rho_A^+"]
\end{tikzcd}%
\end{minipage}\par\smallskip

\noindent Hence, by substitution composition, congruence, and the identity law,
\[
 \Phi[\rho_A^+][\pi_2^J]
 \simeq_J\Phi[\rho_A^+\circ s_{\pi_2^J}]
 \simeq_J\Phi[\mathrm{id}_J]\simeq_J\Phi.
\]

\end{proof}

\begin{proposition}[Existence and the selected instance, \coqterm{EpsilonFull.Examples.existence_to_selected_typing}, \coqterm{EpsilonFull.Examples.selected_to_existence_typing}]
\label{prop:epsilon-existence}
In $\mathrm{iFOL}_{\exists\varepsilon}$ over $\Sigma$, for all base types $I,A$ and every $I\times A\vdash\Phi\;\mathsf{form}$ there are proof terms
$e_\Phi$ and $i_\Phi$ such that
\[
 I\mid\varnothing\vdash
 e_\Phi:\exists_A\Phi\to\Phi[\varepsilon_\Phi],
 \qquad
 I\mid\varnothing\vdash
 i_\Phi:\Phi[\varepsilon_\Phi]\to\exists_A\Phi.
\]
Consequently,
\[
 \exists_A\Phi\dashv\vdash\Phi[\varepsilon_\Phi].
\]
(Cf.~\cite{HilbertBernays1939}, \cite[pp.~409, 414--415]{Pasquali2016}).
\end{proposition}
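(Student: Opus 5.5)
We construct both proof terms explicitly, using the $\varepsilon$-rules and, where convenient, the conversion rule.

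\emph{The term $e_\Phi$.} Put $\Gamma:=(u{:}\exists_A\Phi)$ over $I$; this context is well formed because $I\times A\vdash\Phi\;\mathsf{form}$ gives $I\vdash\exists_A\Phi\;\mathsf{form}$. The variable rule gives $I\mid\Gamma\vdash u:\exists_A\Phi$. The clause $(\varepsilon\Sigma)$ of Definition~\ref{def:formula-conversion} gives $\exists_A\Phi\simeq_I\Phi[\varepsilon_\Phi]$, so the conversion rule yields $I\mid\Gamma\vdash u:\Phi[\varepsilon_\Phi]$. Abstraction then gives $e_\Phi:=\lambda u.u$ with $I\mid\varnothing\vdash e_\Phi:\exists_A\Phi\to\Phi[\varepsilon_\Phi]$. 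We need $I\vdash\Phi[\varepsilon_\Phi]\;\mathsf{form}$ for the side conditions. This follows from regularity (Proposition~\ref{prop:conversion-stability}), or directly from $I\vdash\langle\mathrm{id}_I,\varepsilon_\Phi^A\rangle:I\times A$ by the constant typing rule together with preservation of well-formedness under substitution.

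\emph{A more informative alternative for $e_\Phi$.} We can also avoid relying solely on conversion. Inside $\mathsf{exE}$ over $I\times A$ with $v:\Phi$, form $\mathsf{epsI}(\pi_2,v)$. This has type $\Phi[\pi_1{}^+_A][\varepsilon_{\Phi[\pi_1{}^+_A]}]$ once $\Phi$ is rewritten via Proposition~\ref{prop:diagonal-calculation}(P) as $\Phi[\rho_A^+][\pi_2]$. The $(\varepsilon\mathrm{BC}_\rho)$ clause then converts it to $\Phi[\varepsilon_\Phi][\pi_1]$, which is the required shape $\Psi[\pi_1]$ for $\mathsf{exE}$.

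\textbf{Main obstacle.} The hard part is exactly this bookkeeping: matching $\Phi$ with $\Phi[\rho_A^+][\pi_2^J]$ and the selected instance with its $\rho$-pullback. We therefore fall back on the direct $(\varepsilon\Sigma)$ conversion, which is sound by definition.

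\emph{The term $i_\Phi$.} In context $(w{:}\Phi[\varepsilon_\Phi])$ we use $\mathsf{exI}(\varepsilon_\Phi^A,w):\exists_A\Phi$. Its premises are $I\vdash\varepsilon_\Phi^A:A$ and $I\mid w{:}\Phi[\varepsilon_\Phi]\vdash w:\Phi[\varepsilon_\Phi]$, both immediate. Set $i_\Phi:=\lambda w.\mathsf{exI}(\varepsilon_\Phi,w)$.

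\emph{Interderivability.} Applying $e_\Phi$ and $i_\Phi$ to an assumption gives $\exists_A\Phi\vdash\Phi[\varepsilon_\Phi]$ and $\Phi[\varepsilon_\Phi]\vdash\exists_A\Phi$, which is the claimed $\dashv\vdash$. The only checks left are the well-formedness side conditions, all of which follow from $I\times A\vdash\Phi\;\mathsf{form}$ as noted above.
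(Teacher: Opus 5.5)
Your proof is correct, but your main construction of $e_\Phi$ differs from the paper's. You take $e_\Phi=\lambda u.u$ and obtain its typing from a single conversion step along the clause $(\varepsilon\Sigma)$ of Definition~\ref{def:formula-conversion}. This is legitimate. By symmetry of conversion you could also take $i_\Phi=\lambda w.w$. The cost is that the proposition becomes a direct restatement of that conversion clause.

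The paper instead sets $e_\Phi=\lambda z.\,\mathsf{exE}(z,u.\mathsf{epsI}(\pi_2^J,u))$, which is exactly your ``more informative alternative.'' The premise of $\mathsf{epsI}$ is typed by (P) of Proposition~\ref{prop:diagonal-calculation}. Its conclusion is converted to $\Phi[\varepsilon_\Phi][\pi_1^J]$ by $(\varepsilon\mathrm{BC})$ with $f=\pi_1^J$. So the bookkeeping you call the main obstacle is fully handled by those two facts, and no fall-back is needed.

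The paper's route makes both directions independent of $(\varepsilon\Sigma)$. They come from the proof rules $\mathsf{exE}$, $\mathsf{epsI}$, $\mathsf{exI}$ together with $(\varepsilon\mathrm{BC})$ and the cartesian substitution conversions. This matches the paper's remark that $\mathsf{epsI}$ and $\mathsf{epsE}$ retain explicit proof terms for the two directions of $(\varepsilon\Sigma)$. The inner term $\lambda u.\,\mathsf{epsI}(\pi_2,u)$ is also the unit used later in the epsilon-only term model, where $(\varepsilon\Sigma)$ is unavailable. Likewise, the remark that the Dual Drinker derivation ``uses $(\varepsilon\mathrm{BC})$'' reflects this choice of $e_\Phi$. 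Your route buys brevity and avoids Proposition~\ref{prop:diagonal-calculation} altogether.
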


\begin{proof}
Write $J:=I\times A$, $K:=J\times A$, and
$K\vdash\rho_A^+:=\langle\pi_1^J\circ\pi_1^K,\pi_2^K\rangle:J$.
By \textup{(P)} of Proposition~\ref{prop:diagonal-calculation} and
$(\varepsilon\mathrm{BC})$,
\[
 \Phi[\rho_A^+][\pi_2^J]\simeq_J\Phi,
 \qquad
 \Phi[\rho_A^+][\varepsilon_{\Phi[\rho_A^+]}]
 \simeq_J\Phi[\varepsilon_\Phi][\pi_1^J].
\tag{*}
\]
Take
$e_\Phi:=\lambda z.\,\mathsf{exE}
 (z,u.\mathsf{epsI}(\pi_2^J,u))$ and
$i_\Phi:=\lambda z.\,\mathsf{exI}(\varepsilon_\Phi,z)$, and write
$\Gamma_z:=(z:\exists_A\Phi)$. In the first derivation, the two conversions
in $(*)$ type the premise and conclusion of $\mathsf{epsI}$, respectively:
\[
\prftree[r]{$\scriptstyle\to\mathrm I$}
 {
  \prftree[r]{$\scriptstyle\exists\mathrm E$}
   {I\mid\Gamma_z\vdash z:\exists_A\Phi}
   {J\mid u:\Phi,\Gamma_z[\pi_1^J]\vdash
    \mathsf{epsI}(\pi_2^J,u):
    \Phi[\varepsilon_\Phi][\pi_1^J]}
   {I\mid\Gamma_z\vdash
    \mathsf{exE}\bigl(z,u.\mathsf{epsI}(\pi_2^J,u)\bigr):
    \Phi[\varepsilon_\Phi]}
 }
 {I\mid\varnothing\vdash
  e_\Phi:\exists_A\Phi\to\Phi[\varepsilon_\Phi]},
\]
\[
\prftree[r]{$\scriptstyle\to\mathrm I$}
 {
  \prftree[r]{$\scriptstyle\exists\mathrm I$}
   {I\vdash\varepsilon_\Phi:A}
   {I\mid z:\Phi[\varepsilon_\Phi]\vdash z:\Phi[\varepsilon_\Phi]}
   {I\mid z:\Phi[\varepsilon_\Phi]\vdash
    \mathsf{exI}(\varepsilon_\Phi,z):\exists_A\Phi}
 }
 {I\mid\varnothing\vdash
  i_\Phi:\Phi[\varepsilon_\Phi]\to\exists_A\Phi}.
\]
\end{proof}

\begin{remark}[Inhabitational scope, \coqterm{EpsilonFull_Completeness.hom_fiber}]
No equality theory is imposed on proof terms. Completeness records only
whether such a term exists.
\end{remark}

\begin{example}[An existential prenex derivation, \coqterm{EpsilonFull.Examples.existential_prenex_typing}]
\label{ex:existential-prenex}
In $\mathrm{iFOL}_{\exists\varepsilon}$ over $\Sigma$, let $I,A\in\mathsf{BType}$,
$I\vdash\Psi\;\mathsf{form}$, and
$I\times A\vdash\Phi\;\mathsf{form}$. Then
\[
 I\mid((\exists_A\Phi)\to\Psi)
 \vdash\forall_A(\Phi\to\Psi^\uparrow),
\]
where $J:=I\times A$, $I\times A\vdash\pi_1^J:I$,
$I\times A\vdash\pi_2^J:A$, and
$\Xi^\uparrow:=\Xi[\pi_1^J]$ for every $I\vdash\Xi\;\mathsf{form}$.
Put $K:=(I\times A)\times A$ and
$K\vdash\pi_A^+:=\langle\pi_1^J\circ\pi_1^K,\pi_2^K\rangle:I\times A$.
The cartesian identity $\pi_A^+\circ\langle\mathrm{id}_J,\pi_2^J\rangle\equiv\mathrm{id}_J$ and substitution conversion give
\[
 \Phi[\pi_A^+][\pi_2^J]\simeq_{I\times A}\Phi,
\]
used below in the reverse direction. By the defining existential substitution clause,
\[
 (\exists_A\Phi)^\uparrow
 =(\exists_A\Phi)[\pi_1^J]
 =\exists_A(\Phi[\pi_A^+]).
\]
For
$\Gamma:=\bigl(u:\Phi,h:((\exists_A\Phi)\to\Psi)^\uparrow\bigr)$,
the derivation is
\[
\resizebox{\linewidth}{!}{%
\prftree[r]{$\scriptstyle\forall\mathrm I$}
 {
  \prftree[r]{$\scriptstyle\to\mathrm I$}
   {
    \prftree[r]{$\scriptstyle\to\mathrm E$}
     {
      \prftree[r]{$\scriptstyle\mathrm{var}$}
       {h:((\exists_A\Phi)\to\Psi)^\uparrow\in\Gamma}
       {I\times A\mid\Gamma\vdash
        h:(\exists_A\Phi)^\uparrow\to\Psi^\uparrow}
     }
     {
      \prftree[r]{$\scriptstyle\exists\mathrm I$}
       {I\times A\vdash\pi_2^J:A}
       {
        \prftree[r]{$\scriptstyle\mathrm{conv}$}
         {
          \prftree[r]{$\scriptstyle\mathrm{var}$}
           {u:\Phi\in\Gamma}
           {I\times A\mid\Gamma\vdash u:\Phi}
         }
         {\Phi\simeq_{I\times A}\Phi[\pi_A^+][\pi_2^J]}
         {I\times A\mid\Gamma\vdash
          u:\Phi[\pi_A^+][\pi_2^J]}
       }
       {I\times A\mid\Gamma\vdash
        \mathsf{exI}(\pi_2^J,u):
        \exists_A(\Phi[\pi_A^+])}
     }
     {I\times A\mid\Gamma\vdash
      h\,\mathsf{exI}(\pi_2^J,u):\Psi^\uparrow}
   }
   {I\times A\mid
    h:((\exists_A\Phi)\to\Psi)^\uparrow
    \vdash
    \lambda u.\,h\,\mathsf{exI}(\pi_2^J,u):
    \Phi\to\Psi^\uparrow}
 }
 {I\mid h:(\exists_A\Phi)\to\Psi
  \vdash
  \mathsf{allI}\bigl(\lambda u.\,
  h\,\mathsf{exI}(\pi_2^J,u)\bigr):
  \forall_A(\Phi\to\Psi^\uparrow)}
}%
\]
(Cf.~\cite{Prawitz1965}).
\end{example}

\begin{notation}[Intuitionistic intensional Hilbert $\varepsilon$-logic, \coqterm{EpsilonFull.Hyper_Syntax.h_derivation}]
For derivability in the calculus defined above, we henceforth write
$I\mid\Gamma\vdash_{\exists\varepsilon}p:\Phi$. We omit the proof term when
only inhabitation matters:
\[
 I\mid\Gamma\vdash_{\exists\varepsilon}\Phi
 \quad\Longleftrightarrow\quad
 \exists p\;I\mid\Gamma\vdash_{\exists\varepsilon}p:\Phi.
\]
\end{notation}

\subsection{The epsilon-free comparison calculus}

\begin{definition}[The $\mathrm{iFOL}$-language, \coqterm{IFOL.Base_Syntax.b_tm}, \coqterm{IFOL.Hyper_Syntax.h_ty}]
\label{def:ifol-language}
Fix a typed function--predicate signature $\Sigma$. The raw syntax of
$\mathcal L$ is given by
\[
\begin{aligned}
 t&::=\mathrm{id}\mid g\circ f\mid {!}\mid\langle f,g\rangle
       \mid\pi_1\mid\pi_2\mid\widehat u,\\
 \Phi&::=\top\mid\bot\mid R(t)\mid\Phi\land\Psi\mid\Phi\lor\Psi
       \mid\Phi\to\Psi\mid\forall_A\Phi\mid\exists_A\Phi.
\end{aligned}
\]
Here $A$ is a base type, $u\in\mathcal F$, and $R\in\mathcal R$.
There are no epsilon labels or labelled constants. Its typing and
formation rules are the cartesian base rules and the propositional and
quantifier formation rules. A logical context is a list of formulae
well formed over a common base context.
(Cf.~\cite[pp.~223--225]{Jacobs1999}, \cite{Prawitz1965}).
\end{definition}

\begin{definition}[The iFOL calculus, \coqterm{IFOL.Hyper_Syntax.h_derivation}]
\label{def:ifol-calculus}
Fix a typed signature $\Sigma$. The calculus $\mathrm{iFOL}$ over $\mathcal L$ has the assumption, truth, falsity, implication, conjunction, disjunction,
universal and existential introduction and elimination, and conversion
rules. Its formula conversion is the least equivalence on well-formed
formulae closed under atomic base equivalence, propositional and quantifier
congruence, and identity, composition, and congruence of substitution. It has no $\varepsilon$-terms or
$\varepsilon$-rules.
The resulting derivability judgement is written
$I\mid\Gamma\vdash_{\mathrm{iFOL}}p:\Phi$.

We omit the proof term by writing
\[
 I\mid\Gamma\vdash_{\mathrm{iFOL}}\Phi
 \quad\Longleftrightarrow\quad
 \exists p\;I\mid\Gamma\vdash_{\mathrm{iFOL}} p:\Phi.
\]
(Cf.~\cite{Prawitz1965}).
\end{definition}

\section{Hyperdoctrinal semantics}
\label{sec:hyperdoctrinal-semantics}

The semantics separates base terms from formulae by a thin, BiCCC-valued
hyperdoctrine in the sense of Lawvere~\cite{Lawvere1969,Lawvere1970}.

\subsection{Interpreting the base language}

\begin{definition}[Base model, \coqterm{BaseEpsilonFullModel}, \coqterm{EpsilonFull_Fibration.CoherentBaseModel}]
\label{def:base-model}
For the base language of $\mathcal L_{\exists\varepsilon}$ over a typed signature $\Sigma$, a base model is
$(\mathcal B,S_{\mathcal B},\sem{-})$, if it consists of a cartesian setoid category
$\mathcal B$, with a distinguished object
\[
 S_{\mathcal B}\in\operatorname{Ob}(\mathcal B),
 \qquad S_{\mathcal B}\ne\mathbf 1_{\mathcal B},
\]
where $\mathbf 1_{\mathcal B}$ is the chosen terminal object, and the following interptetation. The object interpretation is the map
\[
 \sem{-}_0:\mathsf{BType}\longrightarrow\operatorname{Ob}(\mathcal B)
\]
defined by structural recursion on base types:
\[
 \sem{\mathbf 1}=\mathbf 1_{\mathcal B},\qquad
 \sem{S}=S_{\mathcal B},\qquad
 \sem{A\times B}=\sem{A}\times\sem{B}.
\]
The morphism part of the interpretation is the following. For every $I,A\in\mathsf{BType}$ and typing derivation $d$ of the judgement $I\vdash t:A$, let
\[
 \sem{t}_d\in
 \operatorname{Hom}_{\mathcal B}(\sem{I},\sem{A})
\]
defined by recursion on $d$ as
\[
\begin{gathered}
 \sem{\mathrm{id}_I}=\mathrm{id}_{\sem{I}},
 \qquad
 \sem{g\circ f}=\sem{g}\circ\sem{f},
 \qquad
 \sem{{!}_I}={!}_{\sem{I}},\\
 \sem{\langle f,g\rangle}=\langle\sem{f},\sem{g}\rangle,
 \qquad
 \sem{\pi_1}=\pi_1,
 \qquad
 \sem{\pi_2}=\pi_2,\\
 \sem{c_s^A}=\iota^A_{I,s},
 \qquad
 \sem{\widehat u}=\iota_u.
\end{gathered}
\]
with modelwise fixed
\[
 \iota^A_{I,s}
 \in\operatorname{Hom}_{\mathcal B}(\sem{I},\sem{A}),
 \qquad
 \iota_u
 \in\operatorname{Hom}_{\mathcal B}
 \bigl(\sem{\operatorname{dom}(u)},\sem{\operatorname{cod}(u)}\bigr).
\]
where $s\in\mathsf{Lab}_\Sigma^{\exists\varepsilon}$, and $u\in\mathcal F$.
We require independence of the typing derivation: for any two derivations $d_1,d_2$ of
$I\vdash t:A$, one has $\sem{t}_{d_1}\sim_{\mathcal B}\sem{t}_{d_2}$.
We then write $\sem{t}$ for the resulting interpretation of a base term.
For $\mathcal L$ omit the labelled-constant interpretation.
The same independence condition applies.
We write $\mathsf{BaseMod}(\Sigma)$ for the class of all triples
$(\mathcal B,S_{\mathcal B},\sem{-})$ of this form.
(Cf.~\cite[pp.~147--148]{Jacobs1999}).
\end{definition}

\begin{remark}[Typing derivations, \coqterm{EpsilonFull_Fibration.CoherentBaseModel}] The interpretation is required to be independent of the typing derivation: if
\(
 d_1,d_2\) derivations of \(I\vdash t:A\), then
 \(
 \sem{t}_{d_1}\sim_{\mathcal B}\sem{t}_{d_2}.
\)
Rocq defines \texttt{interp\_tm} by recursion on derivation $d$ of $I\vdash t:A$. The class \texttt{CoherentBaseModel} records the above independence up to $\sim_{\mathcal B}$. In this paper this property is already part of being a base model.
\end{remark}

\subsection{The structure of a first-order hyperdoctrine}

\begin{definition}[First-order hyperdoctrine, \coqterm{GoodFibration}]
\label{def:goodfibration}
Let $\mathcal B$ be a cartesian setoid category. A first-order
hyperdoctrine over $\mathcal B$ consists of the following data and laws.
\begin{enumerate}
\item For every $X\in\operatorname{Ob}(\mathcal B)$, the fibre $P(X)$ is a
thin bicartesian closed setoid category.

\item Every $f:X\to Y$ induces a semantic substitution functor
\[
 P(f):P(Y)\longrightarrow P(X).
\]
For $X\xrightarrow{f}Y\xrightarrow{g}Z$,
$\Phi\in P(X)$, and $\Theta\in P(Z)$, there are isomorphisms
\[
 P(\mathrm{id}_X)(\Phi)\cong\Phi,
 \qquad
 P(g\circ f)(\Theta)\cong P(f)(P(g)(\Theta)).
\]

\item Semantic substitution preserves the bicartesian closed structure. For
$f:X\to Y$ and $\Phi,\Psi\in P(Y)$,
\[
\begin{gathered}
 P(f)(\top_Y)\cong\top_X,
 \qquad
 P(f)(\bot_Y)\cong\bot_X,\\
 P(f)(\Phi\times\Psi)\cong P(f)(\Phi)\times P(f)(\Psi),\\
 P(f)(\Phi+\Psi)\cong P(f)(\Phi)+P(f)(\Psi),\\
 P(f)(\Psi^\Phi)\cong P(f)(\Psi)^{P(f)(\Phi)}.
\end{gathered}
\]

\item For $X,A\in\operatorname{Ob}(\mathcal B)$ and the projection
$\pi_{X,A}:X\times A\to X$, semantic substitution has both adjoints:
\[
 \Sigma_{\pi_{X,A}}\dashv P(\pi_{X,A})
 \dashv\Pi_{\pi_{X,A}}.
\]

\item Let $f:K\to I$, let $A\in\operatorname{Ob}(\mathcal B)$, let
$\pi_I:I\times A\to I$ and $\pi_K:K\times A\to K$ be the projections, and define
\[
 f_A^+:=\langle f\circ\pi_1,\pi_2\rangle:K\times A\longrightarrow I\times A.
\]
For every $\Phi\in P(I\times A)$, the Beck--Chevalley isomorphisms are
\[
 P(f)(\Sigma_{\pi_I}\Phi)
 \cong
 \Sigma_{\pi_K}(P(f_A^+)\Phi),
 \qquad
 P(f)(\Pi_{\pi_I}\Phi)
 \cong
 \Pi_{\pi_K}(P(f_A^+)\Phi).
\tag{BC}
\]
\[
\begin{tikzcd}[row sep=3.5em, column sep=5em]
 K\times A \arrow[r,"f_A^+"] \arrow[d,"\pi_K"']
   & I\times A \arrow[d,"\pi_I"]\\
 K \arrow[r,"f"'] & I.
\end{tikzcd}
\]

\item Semantic substitution respects the base hom-setoids. If
$f\sim_{\mathcal B}g:X\to Y$, then for every $\Phi\in P(Y)$,
\[
 P(f)(\Phi)\cong P(g)(\Phi).
\tag{PB-cong}
\]
\end{enumerate}
(Cf.~\cite{Lawvere1970}, \cite[pp.~233--235]{Jacobs1999}, \cite{Seely1983}).
\end{definition}

\begin{remark}[Setoid-functoriality, \coqterm{BiCCCFunctor_equiv}]
For bicartesian closed functors $F,G:P(X)\to P(Y)$, write
\[
 F\sim G
 \quad\Longleftrightarrow\quad
 F(\Phi)\cong G(\Phi)
 \quad\text{for every }\Phi\in P(X).
\]
With this hom relation, the preceding data determine a setoid functor
\[
 P:\mathcal B^{\mathrm{op}}\longrightarrow\mathsf{BiCCC}.
\]
The preservation of bicartesian closed structure is up to the displayed
isomorphisms.
(Cf.~\cite[pp.~48--51]{Jacobs1999}).
\end{remark}

\begin{remark}[Quantifier adjunctions, \coqterm{pullback_has_left_adjoint}, \coqterm{pullback_has_right_adjoint}]
In a thin first-order hyperdoctrine $P$ over a cartesian setoid category $\mathcal B$, let $X,A\in\operatorname{Ob}(\mathcal B)$ and
$\pi:=\pi_{X,A}:X\times A\to X$. For $\Phi\in P(X\times A)$ and
$\Psi\in P(X)$, the adjunctions give
\[
 \operatorname{Hom}_{P(X)}(\Sigma_\pi\Phi,\Psi)
 \cong
 \operatorname{Hom}_{P(X\times A)}(\Phi,P(\pi)\Psi),
\]
\[
 \operatorname{Hom}_{P(X\times A)}(P(\pi)\Psi,\Phi)
 \cong
 \operatorname{Hom}_{P(X)}(\Psi,\Pi_\pi\Phi).
\]
These interpret existential and universal quantification, respectively.
(Cf.~\cite{Lawvere1969}, \cite[pp.~230, 233--235]{Jacobs1999}).
\end{remark}

\begin{example}[$H$-valued predicate hyperdoctrine]
\label{ex:h-valued-predicate-hyperdoctrine}
Fix a set $S$ and let $\mathcal B_S$ be the full cartesian subcategory of
$\mathsf{Set}$ whose objects are generated by the singleton $\mathbf 1$, the
set $S$, and binary products. No finiteness assumption is made on $S$. Let
$H$ be a finite, hence complete, Heyting algebra. For every
$X\in\operatorname{Ob}(\mathcal B_S)$, let the objects of the fibre $P(X)$
be the $H$-valued predicates on $X$:
\[
 \operatorname{Ob}(P(X)):=H^X=\{\Phi\mid\Phi:X\to H\}.
\]
For $\Phi,\Psi\in P(X)$, define
\[
 \Phi\leq\Psi
 \quad\Longleftrightarrow\quad
 \forall x\in X\;\Phi(x)\leq_H\Psi(x).
\]
Thus $P(X)$ is the thin category determined by the pointwise order. Its
terminal object, initial object, products, coproducts, and exponentials are
the pointwise operations $\top$, $\bot$, $\wedge$, $\vee$, and
$\Rightarrow$ of $H$. Hence thinness reduces the bicartesian closed
structure of the fibre to its Heyting-algebra structure.

For $X,Y\in\operatorname{Ob}(\mathcal B_S)$, a function $f:X\to Y$, and an object
$\Psi\in P(Y)=H^Y$, semantic substitution is precomposition:
\[
 P(f)(\Psi):=\Psi\circ f\in P(X).
\]
For $X,A\in\operatorname{Ob}(\mathcal B_S)$, let
$\pi:X\times A\to X$ be the projection. If
$\Phi\in P(X\times A)=H^{X\times A}$, the two adjoints are given by
\[
 (\Sigma_\pi\Phi)(x):=\bigvee_{a\in A}\Phi(x,a),
 \qquad
 (\Pi_\pi\Phi)(x):=\bigwedge_{a\in A}\Phi(x,a).
\]
Thus, for $\Psi\in P(X)$,
\[
 \Sigma_\pi\Phi\leq\Psi
 \Longleftrightarrow
 \Phi\leq P(\pi)\Psi,
 \qquad
 P(\pi)\Psi\leq\Phi
 \Longleftrightarrow
 \Psi\leq\Pi_\pi\Phi.
\]
The Beck--Chevalley equations hold pointwise.
(Cf.~\cite[p.~239]{Jacobs1999}, \cite[p.~159]{MaiettiPasqualiRosolini2017}).
\end{example}

\subsection{Interpreting the fixed language}

Fix a typed function--predicate signature $\Sigma$, its language
$\mathcal L_{\exists\varepsilon}$, a base model
$(\mathcal B,S_{\mathcal B},\sem{-})$, and a
first-order hyperdoctrine $P$ over $\mathcal B$.

\begin{notation}[Semantic substitution, \coqterm{Pullback}]
Fix a base model of $\mathcal L_{\exists\varepsilon}$ over $\Sigma$ and a thin first-order hyperdoctrine $P$ over its base. Write $\Phi[f]$ for syntactic substitution. For a well-typed base substitution $I\vdash f:J$, write
\[
 f^*:=P(\sem{f}):P(\sem{J})\longrightarrow P(\sem{I}).
\]
Thus $\sem{f}:\sem{I}\to\sem{J}$ interprets the base substitution, while
$f^*$ is its contravariant action on formula objects and fibre morphisms.
\end{notation}

\begin{remark}[Typing derivations, \coqterm{EpsilonFull_Fibration.CoherentBaseModel}]
In Rocq, $f^*$ is
\texttt{Pullback (interp\_tm $d_f$)} for a derivation
$d_f:I\vdash f:J$. Independence of $\sem{f}$ from $d_f$ up to
$\sim_{\mathcal B}$, together with \textup{(PB-cong)}, makes its action
independent, up to fibre isomorphism, of the choice of $d_f$.
\end{remark}

\begin{definition}[Atomic interpretation, \coqterm{EpsilonFull_Fibration.EpsilonHyperModel}]
\label{def:predicate-interpretation}
Fix a typed signature $\Sigma$, a base model of $\mathcal L_{\exists\varepsilon}$, and a thin first-order hyperdoctrine $P$ over its cartesian base $\mathcal B$. An atomic interpretation is a family
\[
 \mathcal A_I(R,t)\in\operatorname{Ob}(P(\sem{I}))
 \qquad
 (I\in\mathsf{BType},\ R\in\mathcal R,\ t\text{ a raw base term})
\]
satisfying the following conditions. For $I,J\in\mathsf{BType}$,
$R\in\mathcal R$, every raw base term $t$, and every $I\vdash f:J$,
\[
 f^*(\mathcal A_J(R,t))
 \cong
 \mathcal A_I(R,t\circ f).
\tag{At-subst}
\]
For $I,A\in\mathsf{BType}$, $R\in\mathcal R$, and $I\vdash t,u:A$,
\[
 t\equiv_{I,A}u
 \quad\Longrightarrow\quad
 \mathcal A_I(R,t)\cong\mathcal A_I(R,u).
\tag{At-cong}
\]
\end{definition}

\begin{remark}[Raw atomic data, \coqterm{EpsilonFull_Fibration.i_atom}]
The family $\mathcal A$ and \textup{(At-subst)} are defined for raw base
terms. In a well-formed atom $R(t)$, one has
$I\vdash t:\operatorname{ar}(R)$.
\end{remark}

\begin{definition}[Interpretation of formulae and logical contexts, \coqterm{EpsilonFull_Fibration.interp_h_ty}, \coqterm{EpsilonFull_Fibration.interp_h_ctx}]
\label{def:formula-context-interpretation}
For $\mathcal L_{\exists\varepsilon}$ over $\Sigma$, fix a base model, a thin first-order hyperdoctrine $P$, and raw atomic data $\mathcal A$ satisfying substitution and base congruence. Define
$\sem{\Phi}_I\in\operatorname{Ob}(P(\sem{I}))$ by structural recursion:
\[
\begin{gathered}
 \sem{\top}_I:=\top_{P(\sem{I})},
 \qquad
 \sem{\bot}_I:=\bot_{P(\sem{I})},
 \qquad
 \sem{R(t)}_I:=\mathcal A_I(R,t),\\
 \sem{\Phi\land\Psi}_I:=\sem{\Phi}_I\times\sem{\Psi}_I,
 \qquad
 \sem{\Phi\lor\Psi}_I:=\sem{\Phi}_I+\sem{\Psi}_I,\\
 \sem{\Phi\to\Psi}_I:=\sem{\Psi}_I^{\sem{\Phi}_I}.
\end{gathered}
\]
Let
\[
 \pi_{I,A}:\sem{I}\times\sem{A}\longrightarrow\sem{I}
\]
be the first projection. The quantifier clauses are
\[
 \sem{\forall_A\Phi}_I
 :=\Pi_{\pi_{I,A}}(\sem{\Phi}_{I\times A}),
 \qquad
 \sem{\exists_A\Phi}_I
 :=\Sigma_{\pi_{I,A}}(\sem{\Phi}_{I\times A}).
\]
Logical contexts are interpreted by
\[
 \sem{\varnothing}_I:=\top_{P(\sem{I})},
 \qquad
 \sem{\Phi,\Gamma}_I:=\sem{\Gamma}_I\times\sem{\Phi}_I.
\]
For $I\vdash\Phi\;\mathsf{form}$ and
$I\vdash\Gamma\;\mathsf{ctx}$, we suppress the subscript $I$.
(Cf.~\cite{Lawvere1970}, \cite[pp.~246--248]{Jacobs1999}).
\end{definition}

\begin{remark}[Order of assumptions, \coqterm{EpsilonFull_Fibration.interp_h_ctx}]
In $\Phi,\Gamma$, the new assumption $\Phi$ is the head of the Rocq list but
the second product factor. Thus $\pi_2$ interprets the newest assumption and
$\pi_1$ discards it.
\end{remark}

\begin{proposition}[Semantic substitution lemma, \coqterm{EpsilonFull_Soundness.form_subst_iso}]
\label{prop:semantic-substitution}
Fix a base model of $\mathcal L_{\exists\varepsilon}$ over $\Sigma$, a thin first-order hyperdoctrine $P$, and raw atomic data satisfying \textup{(At-subst)} and \textup{(At-cong)}. If $I\vdash f:J$ and $\Phi$ is a raw formula of this language, then
\[
 \sem{\Phi[f]}_I
 \cong
 f^*(\sem{\Phi}_J).
\tag{Subst}
\]
(Cf.~\cite[pp.~48--49, 226]{Jacobs1999}).
\end{proposition}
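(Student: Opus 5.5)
The plan is to prove (Subst) by structural induction on the raw formula $\Phi$, generalising over the base contexts $I,J$ and the substitution $I\vdash f:J$. Generalising is essential because in the quantifier cases the substitution changes from $f$ to the lift $f_A^+$, and the contexts change from $I,J$ to $I\times A,J\times A$. All isomorphisms live in thin fibres, so we only need arrows in both directions. Coherence between these arrows never has to be checked, and the isomorphisms compose freely.

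\emph{Base and atomic cases.} For $\top$ and $\bot$, the claim is exactly the preservation isomorphisms $f^*(\top)\cong\top$ and $f^*(\bot)\cong\bot$ of Definition~\ref{def:goodfibration}(3). For $R(t)$ we have $\sem{R(t)[f]}_I=\mathcal A_I(R,t\circ f)$, and \textup{(At-subst)} gives $f^*(\mathcal A_J(R,t))\cong\mathcal A_I(R,t\circ f)$ directly. It matters here that \textup{(At-subst)} is stated for raw $t$, since the statement concerns raw formulae.

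\emph{Propositional connectives.} For $\star\in\{\land,\lor,\to\}$, apply the induction hypotheses to $\Phi$ and $\Psi$. The functorial constructions $\times$, $+$ and exponentials respect isomorphisms; in a thin BiCCC this is just monotonicity, contravariant in the exponent. Then compose with the preservation isomorphisms of Definition~\ref{def:goodfibration}(3). For example,
$\sem{(\Phi\to\Psi)[f]}=\sem{\Psi[f]}^{\sem{\Phi[f]}}\cong (f^*\sem{\Psi})^{f^*\sem{\Phi}}\cong f^*(\sem{\Psi}^{\sem{\Phi}})$.

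\emph{Quantifiers (main step).} Here $\sem{(\exists_A\Phi)[f]}_I=\Sigma_{\pi_{\sem I,\sem A}}\sem{\Phi[f_A^+]}_{I\times A}$. The induction hypothesis at $I\times A\vdash f_A^+:J\times A$ gives $\sem{\Phi[f_A^+]}\cong P(\sem{f_A^+})\sem{\Phi}$. Since $\Sigma_\pi$ is a left adjoint, hence a functor, it preserves this isomorphism.

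The key point is matching $\sem{f_A^+}=\langle\sem f\circ\pi_1,\pi_2\rangle$, computed via the base-model clauses $\sem{\langle g,h\rangle}=\langle\sem g,\sem h\rangle$ and $\sem{\pi_i}=\pi_i$, with the semantic lift $(\sem f)_{\sem A}^+$ used in \textup{(BC)}. These are equal on the nose up to $\sim_{\mathcal B}$, so \textup{(PB-cong)} identifies the two pullback functors up to isomorphism. The Beck--Chevalley isomorphism \textup{(BC)} then gives $\Sigma_{\pi_I}P((\sem f)^+_{\sem A})\sem\Phi\cong f^*\Sigma_{\pi_J}\sem\Phi$. The $\forall$ case is identical, using $\Pi$ and the second \textup{(BC)} isomorphism.

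I expect this quantifier case to be the main obstacle. Bookkeeping is needed to make the syntactic lift, its interpretation, and the semantic square in \textup{(BC)} line up. One must also use derivation-independence of $\sem{-}$ from the base model, because the typing derivation of $f_A^+$ is built from that of $f$. A further subtlety is that for raw $\Phi$ the interpretation $\sem{\Phi}_J$ is still defined by structural recursion at every $J$, so the induction makes sense without well-formedness hypotheses.
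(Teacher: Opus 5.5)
Your proposal is correct and follows the paper's own proof, which is structural induction on $\Phi$. Your case analysis fills in what the paper's one-line argument leaves implicit: the preservation isomorphisms for the connectives, \textup{(At-subst)} for atoms, and, for the quantifiers, the induction hypothesis at the lift $f_A^+$ combined with \textup{(PB-cong)} and \textup{(BC)}.
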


\begin{proof}
Induction on $\Phi$. \end{proof}

\subsection{The intensional Hilbert
\texorpdfstring{$\varepsilon$}{epsilon}-structure}

The interpretation fixed above already assigns a base morphism to every
syntactic term $\varepsilon_\Phi$. The additional structure relates its
selected instance to existential quantification.

\begin{notation}[The selected right inverse morphism, \coqterm{epsilon_section}]
\label{note:selected right inverse}
In a base model of $\mathcal L_{\exists\varepsilon}$ over $\Sigma$, let $I,A\in\mathsf{BType}$ and
$I\times A\vdash\Phi\;\mathsf{form}$. Then let
\[
 s_\Phi
 :=\langle\mathrm{id}_{\sem{I}},\sem{\varepsilon_\Phi}\rangle
 :\sem{I}\longrightarrow\sem{I}\times\sem{A}.
\]
\end{notation}

\begin{remark}[Categorical sections, \coqterm{epsilon_section}]
A section of a morphism $p:E\to B$ is a morphism $s:B\to E$ such that
$p\circ s=\mathrm{id}_B$. Thus $s$ is a right inverse of $p$. In a setoid
category, equality is replaced by the hom-setoid relation. In the present
case, the relevant arrows are
\[
\begin{tikzcd}[column sep=7em]
 \sem{I}
   \arrow[r, bend left=18, "s_\Phi"]
 & \sem{I}\times\sem{A}
   \arrow[l, bend left=18, "\pi_{I,A}"]
\end{tikzcd}
\]
(Cf.~\cite{LambekScott1986}).
\end{remark}

\begin{proposition}[Interpretation of the selected instance, \coqterm{EpsilonFull_SelectionModels.selected_section_projection}, \coqterm{EpsilonFull_SelectionModels.selected_instance_interpretation}]
\label{prop:selected-instance}
Fix a base model of $\mathcal L_{\exists\varepsilon}$ over $\Sigma$, a thin first-order hyperdoctrine $P$, and an atomic interpretation satisfying substitution and base congruence. For $I\times A\vdash\Phi\;\mathsf{form}$, put $s_\Phi=\langle\mathrm{id},\sem{\varepsilon_\Phi}\rangle$ and let $\pi_{I,A}:\sem{I}\times\sem{A}\to\sem{I}$ be the first projection. Then
\[
 \pi_{I,A}\circ s_\Phi\sim_{\mathcal B}\mathrm{id}_{\sem{I}},
 \qquad
 \sem{\Phi[\varepsilon_\Phi]}_I
 \cong
 P(s_\Phi)(\sem{\Phi}_{I\times A}).
\tag{E-subst}
\]
\end{proposition}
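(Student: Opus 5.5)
The proof has two parts: a cartesian identity in the base category, and one application of the semantic substitution lemma, Proposition~\ref{prop:semantic-substitution}, to the base term $\langle\mathrm{id}_I,\varepsilon_\Phi\rangle$.

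\emph{The section identity.} The first claim holds in any cartesian setoid category. By the universal property of binary products (the $\beta$-law for the first projection, up to $\sim_{\mathcal B}$) we get
\[
 \pi_{I,A}\circ\langle\mathrm{id}_{\sem{I}},\sem{\varepsilon_\Phi}\rangle
 \sim_{\mathcal B}\mathrm{id}_{\sem{I}} .
\]
No property of $\varepsilon_\Phi$ is used beyond its typing $I\vdash\varepsilon_\Phi:A$, which is an instance of the labelled-constant rule.

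\emph{The selected instance.} By Notation~\ref{not:formula-instantiation}, $\Phi[\varepsilon_\Phi]$ is by definition the raw formula $\Phi[\langle\mathrm{id}_I,\varepsilon_\Phi\rangle]$. The substitution term is well typed: pairing the derivations $I\vdash\mathrm{id}_I:I$ and $I\vdash\varepsilon_\Phi:A$ gives a derivation $d$ of $I\vdash\langle\mathrm{id}_I,\varepsilon_\Phi\rangle:I\times A$. Proposition~\ref{prop:semantic-substitution} with $f:=\langle\mathrm{id}_I,\varepsilon_\Phi\rangle$ and $J:=I\times A$ then yields
\[
 \sem{\Phi[\varepsilon_\Phi]}_I\cong P(\sem{f})(\sem{\Phi}_{I\times A}).
\]
By the clauses of Definition~\ref{def:base-model} for pairing, identity and labelled constants, $\sem{f}_d=\langle\mathrm{id}_{\sem{I}},\sem{\varepsilon_\Phi}\rangle=s_\Phi$.

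\emph{The expected obstacle.} The only delicate point is the setoid bookkeeping in this last step. Rocq interprets base terms through typing derivations, so $\sem{f}$ and $s_\Phi$ need only agree up to $\sim_{\mathcal B}$, not definitionally. I would handle this as follows.
\begin{enumerate}
\item Independence of the typing derivation, which is part of being a base model, identifies $\sem{f}_d$ with the morphism computed from any other derivation, up to $\sim_{\mathcal B}$.
\item Pairing congruence in $\mathcal B$ gives $\sem{f}_d\sim_{\mathcal B}s_\Phi$.
\item \textup{(PB-cong)} turns this into an isomorphism $P(\sem{f})(\sem{\Phi})\cong P(s_\Phi)(\sem{\Phi})$.
\item Composing with the isomorphism from Proposition~\ref{prop:semantic-substitution} gives \textup{(E-subst)}.
\end{enumerate}
Since the fibres are thin, isomorphisms compose without any coherence conditions.
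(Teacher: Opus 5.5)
Your proof is correct and follows the paper's argument: the first claim is the first product equation, and the second applies \textup{(Subst)} to the base term $\langle\mathrm{id}_I,\varepsilon_\Phi\rangle$, then uses \textup{(PB-cong)} to replace $\sem{\langle\mathrm{id}_I,\varepsilon_\Phi\rangle}$ by $s_\Phi$. Your extra bookkeeping (derivation independence plus pairing congruence) only spells out the $\sim_{\mathcal B}$ step that the paper covers with its single appeal to \textup{(PB-cong)}.
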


\begin{proof}
The first statement is the first product equation. For the second,
\[
\begin{aligned}
 \sem{\Phi[\varepsilon_\Phi]}_I
 &=\sem{\Phi[\langle\mathrm{id}_I,\varepsilon_\Phi\rangle]}_I\\
 &\cong
 P\bigl(\sem{\langle\mathrm{id}_I,\varepsilon_\Phi\rangle}\bigr)
   (\sem{\Phi}_{I\times A})\\
 &\cong P(s_\Phi)(\sem{\Phi}_{I\times A}).
\end{aligned}
\]
The two isomorphisms are \textup{(Subst)} and \textup{(PB-cong)}.
\end{proof}

\begin{remark}[Weakening and instantiation, \coqterm{EpsilonFull_Soundness.interp_subst_comp}]
The section equation and contravariant functoriality give
\[
 P(s_\Phi)\bigl(P(\pi_{I,A})(X)\bigr)\cong X
 \qquad(X\in\operatorname{Ob}(P(\sem{I}))).
\]
Thus $P(\pi_{I,A})$ weakens a formula and $P(s_\Phi)$ substitutes the
selected term.
(Cf.~\cite[pp.~48--49]{Jacobs1999}).
\end{remark}

\begin{definition}[Intensional Hilbert $\varepsilon$-structure, \coqterm{AdjointEpsilonFullFibration}]
\label{def:intensional-epsilon-structure}
Let $P$ be a first-order hyperdoctrine over a
base model $(\mathcal B,S_{\mathcal B},\sem{-})$, equipped with an atomic
interpretation of $\mathcal L_{\exists\varepsilon}$ over a typed function--predicate signature $\Sigma$. The base model
interprets the labelled constants, hence provides
$\sem{\varepsilon_\Phi}:\sem{I}\to\sem{A}$ for every
$I\times A\vdash\Phi\;\mathsf{form}$. An intensional Hilbert
$\varepsilon$-structure on $P$ is a family of isomorphisms
\[
 \Sigma_{\pi_{I,A}}(\sem{\Phi}_{I\times A})
 \cong
 \sem{\Phi[\varepsilon_\Phi]}_I
\tag{$\varepsilon\Sigma$}
\]
indexed by $I\times A\vdash\Phi\;\mathsf{form}$.
(Cf.~\cite[pp.~414--415]{Pasquali2016}, \cite[p.~452]{Pasquali2019}).
\end{definition}

\begin{definition}[Thin $\varepsilon$-hyperdoctrine, \coqterm{EpsilonFullModel}]
\label{def:intensional-epsilon-hyperdoctrine}
For a typed signature $\Sigma$, a model of $\mathrm{iFOL}_{\exists\varepsilon}$,
also called a thin $\varepsilon$-hyperdoctrine, consists of a cartesian base model of $\mathcal L_{\exists\varepsilon}$, a thin
first-order hyperdoctrine $P$ over that base, and raw atomic data satisfying
substitution and base congruence. For every well-formed
$I\times A\vdash\Phi\;\mathsf{form}$ it has an isomorphism
$\Sigma_{\pi_{I,A}}\sem{\Phi}\cong\sem{\Phi[\varepsilon_\Phi]}$.
The base interpretation supplies the selected term
$\sem{\varepsilon_\Phi}:\sem{I}\to\sem{A}$.
(Cf.~\cite[pp.~414--415]{Pasquali2016}).
\end{definition}

\begin{remark}[Rocq packaging, \coqterm{EpsilonAsSigmaFibration}]
The class \texttt{EpsilonFullModel} bundles the data of
Definition~\ref{def:intensional-epsilon-hyperdoctrine}. Its
$\varepsilon$-component is \texttt{AdjointEpsilonFullFibration}, and
\texttt{eps\_is\_left\_adj\_obj} supplies
\textup{$(\varepsilon\Sigma)$}.
The alternative class \texttt{EpsilonAsSigmaFibration} supplies a second
left adjoint $L_{I,A}\dashv P(\pi_{I,A})$ together with
\[
 L_{I,A}(\sem{\Phi}_{I\times A})
 \cong\sem{\Phi[\varepsilon_\Phi]}_I.
\]
Uniqueness of left adjoints gives
$\Sigma_{\pi_{I,A}}(\sem{\Phi}_{I\times A})
\cong L_{I,A}(\sem{\Phi}_{I\times A})$, hence
\textup{$(\varepsilon\Sigma)$}.
\end{remark}

\begin{remark}[Intensionality, \coqterm{EpsilonFullSubstitutivity.separating_model_refutes_substitutivity}]
The morphism $\sem{\varepsilon_\Phi}$ interprets a base constant labelled by
the raw formula $\Phi$. The model is not required to select a morphism for every object
of $P(\sem{I\times A})$. Moreover,
\[
 \sem{\Phi}_{I\times A}\cong\sem{\Psi}_{I\times A}
 \quad\not\Longrightarrow\quad
 \sem{\varepsilon_\Phi}\sim_{\mathcal B}\sem{\varepsilon_\Psi}.
\]
No equality of the selected morphisms is required. This is the semantic
form of the intensionality imposed in the syntax.
(Cf.~\cite{Molnar2013}).
\end{remark}

\begin{proposition}[Interpretation of formula conversion, \coqterm{EpsilonFull_Soundness.interp_form_equiv}]
\label{prop:semantic-formula-conversion}
In every thin $\varepsilon$-hyperdoctrine $\mathcal M$ interpreting $\mathcal L_{\exists\varepsilon}$ over $\Sigma$, if $\Phi\simeq_I\Psi$ is a conversion of $\mathrm{iFOL}_{\exists\varepsilon}$, then
\[
 \sem{\Phi}_I\cong\sem{\Psi}_I.
\tag{Form-cong}
\]
\end{proposition}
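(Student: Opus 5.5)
We argue by induction on the derivation of $\Phi\simeq_I\Psi$ (Definition~\ref{def:formula-conversion}), producing in each case an isomorphism in the thin fibre $P(\sem{I})$. Since the fibres are thin, an isomorphism is just a pair of morphisms in opposite directions. Composites and inverses of isomorphisms are again isomorphisms, so reflexivity, symmetry and transitivity are immediate.

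\emph{Congruence cases.} For $\land,\lor,\to$ the interpretation uses the bicartesian closed operations $\times$, $+$ and $(-)^{(-)}$ of the fibre. These are functorial in each argument, contravariantly in the exponent, so they carry isomorphisms to isomorphisms. For the quantifier congruences, $\Pi_{\pi_{I,A}}$ and $\Sigma_{\pi_{I,A}}$ are functors, being adjoints, and hence also preserve isomorphisms. The atomic clause $t\equiv_{I,\operatorname{ar}(R)}u$ is exactly \textup{(At-cong)}.

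\emph{Substitution clauses.} These rest on the semantic substitution lemma (Proposition~\ref{prop:semantic-substitution}):
\begin{itemize}
\item For $\Phi[\mathrm{id}_I]\simeq_I\Phi$ we get $\sem{\Phi[\mathrm{id}_I]}\cong P(\mathrm{id})(\sem{\Phi})\cong\sem{\Phi}$, using (Subst), $\sem{\mathrm{id}_I}=\mathrm{id}$ and the unit isomorphism of Definition~\ref{def:goodfibration}.
\item For composition we get
\[
\sem{(\Phi[f])[g]}\cong g^*\sem{\Phi[f]}\cong g^*f^*\sem{\Phi}\cong P(\sem f\circ\sem g)\sem{\Phi}=\sem{\Phi[f\circ g]}\text{ up to }\cong .
\]
Here we use (Subst) twice, the fact that the functor $g^*$ preserves isomorphisms, the composition isomorphism, and (Subst) once more.
\item For $f\equiv g$ we need the auxiliary fact that base-term equivalence is sound: $f\equiv_{I,J}g$ implies $\sem f\sim_{\mathcal B}\sem g$. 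This follows by a routine induction on $\equiv$, since every generating equation holds in a cartesian setoid category. Then \textup{(PB-cong)} together with (Subst) gives the result.
\end{itemize}

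\emph{Epsilon clauses.} The clause $(\varepsilon\Sigma)$ is interpreted directly by the isomorphism $\Sigma_{\pi_{I,A}}\sem{\Phi}\cong\sem{\Phi[\varepsilon_\Phi]}$ of Definition~\ref{def:intensional-epsilon-hyperdoctrine}, since $\sem{\exists_A\Phi}_I=\Sigma_{\pi_{I,A}}\sem{\Phi}$. The clause $(\varepsilon\mathrm{BC})$ is the main obstacle, because the two sides involve different labelled constants $\varepsilon_{\Theta[f_A^+]}$ and $\varepsilon_\Theta$. Their interpretations need not agree, so no base-level argument applies. We instead route through $\Sigma$:
\[
\sem{\Theta[f_A^+][\varepsilon_{\Theta[f_A^+]}]}\cong\Sigma_{\pi_K}\sem{\Theta[f_A^+]}\cong\Sigma_{\pi_K}\bigl(P(\sem{f_A^+})\sem{\Theta}\bigr)\cong f^*\Sigma_{\pi_J}\sem{\Theta}\cong f^*\sem{\Theta[\varepsilon_\Theta]}\cong\sem{\Theta[\varepsilon_\Theta][f]} .
\]
The steps are justified as follows:
\begin{itemize}
\item the first uses $(\varepsilon\Sigma)$ for the well-formed formula $\Theta[f_A^+]$;
\item the second uses (Subst), with $\Sigma_{\pi_K}$ preserving isomorphisms;
\item the third is (BC);
\item the fourth uses $(\varepsilon\Sigma)$ for $\Theta$, with $f^*$ applied;
\item the last is (Subst).
\end{itemize}
One subtlety is that $\sem{f_A^+}$, computed syntactically as $\langle\sem f\circ\pi_1,\pi_2\rangle$, matches the semantic $f_A^+$ used in (BC) only up to $\sim_{\mathcal B}$. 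We therefore insert a \textup{(PB-cong)} step. We also need well-formedness of $\Theta[f_A^+]$ over $K\times A$, which follows from the side condition and substitution preserving formation. Regularity (Proposition~\ref{prop:conversion-stability}) supplies any well-formedness needed in the inductive cases.
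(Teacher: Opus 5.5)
Your proposal is correct and follows the paper's proof. Both argue by induction on the conversion derivation: (At-cong) handles atoms, functoriality handles the congruences, and (Subst) with the unit, composition and (PB-cong) laws handles the three substitution clauses. Both use $(\varepsilon\Sigma)$ directly for the principal case, and for $(\varepsilon\mathrm{BC})$ both use the same five-step chain through $\Sigma_\pi$, (Subst) and Beck--Chevalley. Your extra remarks only make explicit details the paper leaves implicit: soundness of $\equiv$ for the substitution-congruence clause, well-formedness of $\Theta[f_A^+]$ so that $(\varepsilon\Sigma)$ applies, and the harmless (PB-cong) step matching $\sem{f_A^+}$ with the semantic lift.
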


\begin{proof}
Induction on the conversion derivation. Atomic conversion uses
\textup{(At-cong)}. The connective and quantifier congruences use the
corresponding functors. The principal $\varepsilon$-case is
\textup{$(\varepsilon\Sigma)$}. The three substitution cases are
\textup{(Subst-id)}, \textup{(Subst-comp)}, and \textup{(Subst-cong)}.

For the $\varepsilon$-Beck--Chevalley case, let $I\vdash f:J$ and
$J\times A\vdash\Phi\;\mathsf{form}$. Then
\[
\begin{aligned}
 \sem{\Phi[f_A^+][\varepsilon_{\Phi[f_A^+]}]}_I
 &\cong\Sigma_{\pi_{I,A}}(\sem{\Phi[f_A^+]}_{I\times A})\\
 &\cong\Sigma_{\pi_{I,A}}
   \bigl((f_A^+)^*(\sem{\Phi}_{J\times A})\bigr)\\
 &\cong f^*\bigl(\Sigma_{\pi_{J,A}}(\sem{\Phi}_{J\times A})\bigr)\\
 &\cong f^*(\sem{\Phi[\varepsilon_\Phi]}_J)\\
 &\cong\sem{\Phi[\varepsilon_\Phi][f]}_I.
\end{aligned}
\]
The five comparisons are the inverse of
\textup{$(\varepsilon\Sigma)$}, \textup{(Subst)}, the inverse of
Beck--Chevalley, semantic substitution applied to
\textup{$(\varepsilon\Sigma)$}, and the
inverse of \textup{(Subst)}.
\end{proof}

\begin{remark}[Interpretation of context conversion, \coqterm{EpsilonFull_Soundness.interp_ctx_equiv}]
Proposition~\ref{prop:semantic-formula-conversion} extends componentwise to
logical contexts: if $\Gamma\simeq_I\Delta$, then
\[
 \sem{\Gamma}_I\cong\sem{\Delta}_I.
\]
This follows by induction on context conversion and product congruence.
\end{remark}

\begin{remark}[Use in soundness, \coqterm{EpsilonFull_Soundness.soundness}]
The forward direction of \textup{$(\varepsilon\Sigma)$} interprets
$\varepsilon$-introduction. Its inverse turns a selected instance into an
existential object and is used in $\varepsilon$-elimination.
\end{remark}

\subsection{The model classes and categorical choice}

\begin{proposition}[Three forms of the selection condition, \coqterm{epsilon_selection_unit_equivalence}, \coqterm{epsilon_selection_unit_iff_inhabited_adjunction}]
\label{prop:selection-characterization}
Let $P$ be a thin first-order hyperdoctrine over a cartesian setoid
category $\mathcal B$. Fix objects $X,A$, a predicate
$Q\in P(X\times A)$, and an arrow $e:X\to A$. Put
$\pi=\pi_{X,A}$, $s=\langle\mathrm{id}_X,e\rangle$, and $E=P(s)Q$.
The following conditions are equivalent:
\begin{enumerate}
\item there is a fibre arrow $Q\to P(\pi)E$;
\item there is a fibre isomorphism $\Sigma_\pi Q\cong E$;
\item for every object $R\in P(X)$,
\[
 \operatorname{Hom}_{P(X)}(E,R)\text{ is inhabited}
 \quad\Longleftrightarrow\quad
 \operatorname{Hom}_{P(X\times A)}(Q,P(\pi)R)\text{ is inhabited}.
\]
\end{enumerate}
This is the selection form of the existential adjunction.
(Cf.~\cite[pp.~414--415]{Pasquali2016}, \cite[p.~452]{Pasquali2019}).
\end{proposition}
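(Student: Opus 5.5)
I will prove the cycle (1)$\Rightarrow$(3)$\Rightarrow$(2)$\Rightarrow$(1), using only thinness of the fibres, the adjunction $\Sigma_\pi\dashv P(\pi)$, and the section identity $\pi\circ s\sim_{\mathcal B}\mathrm{id}_X$. Together with contravariant functoriality and (PB-cong), the section identity gives the natural isomorphism $P(s)P(\pi)R\cong R$ recorded in the Weakening remark above. Because every fibre is thin, an isomorphism is just a pair of arrows in opposite directions, and every statement reduces to inhabitation of hom-sets. Throughout I only need to produce arrows; no commuting diagrams need checking.

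\emph{(1)$\Rightarrow$(3).} Fix $R\in P(X)$. For the forward direction, an arrow $E\to R$ gives $P(\pi)E\to P(\pi)R$ by functoriality. Composing with the given $Q\to P(\pi)E$ yields $Q\to P(\pi)R$. For the backward direction, an arrow $Q\to P(\pi)R$ gives, after applying $P(s)$, an arrow $E=P(s)Q\to P(s)P(\pi)R\cong R$. This direction does not use (1) at all.

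\emph{(3)$\Rightarrow$(2).} Take $R:=\Sigma_\pi Q$. The unit $Q\to P(\pi)\Sigma_\pi Q$ shows that the right-hand side of (3) is inhabited, so (3) gives an arrow $E\to\Sigma_\pi Q$. Next take $R:=E$. The arrow $Q\to P(\pi)E$ is what (3) needs on its right-hand side, and it follows from $\mathrm{id}_E$ via the ($\Leftarrow$) direction of (3), read contrapositively: $\operatorname{Hom}(E,E)$ is inhabited, so $\operatorname{Hom}(Q,P(\pi)E)$ is inhabited. Transposing $Q\to P(\pi)E$ along $\Sigma_\pi\dashv P(\pi)$ gives $\Sigma_\pi Q\to E$. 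Thinness then makes these two arrows inverse to each other, hence $\Sigma_\pi Q\cong E$.

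\emph{(2)$\Rightarrow$(1).} Compose the unit $Q\to P(\pi)\Sigma_\pi Q$ with $P(\pi)$ applied to the isomorphism $\Sigma_\pi Q\to E$. This gives $Q\to P(\pi)E$.

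The only delicate point is the isomorphism $P(s)P(\pi)R\cong R$ used in (1)$\Rightarrow$(3). It needs $P(\pi\circ s)\cong P(s)\circ P(\pi)$ from item~2 of Definition~\ref{def:goodfibration}, then (PB-cong) applied to $\pi\circ s\sim\mathrm{id}_X$, then $P(\mathrm{id}_X)\cong\mathrm{id}$. I expect this to be routine; the main care is keeping the variance straight in the setoid setting.
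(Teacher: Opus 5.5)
Your proof is correct, and it is organized differently from the paper's.

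\textbf{The paper's route.} It proves (1)$\Leftrightarrow$(2) directly. It first observes that an arrow $E\to\Sigma_\pi Q$ exists unconditionally: reindex the unit $Q\to P(\pi)\Sigma_\pi Q$ along $s$ and use $\pi s\sim\mathrm{id}_X$. So (1) only has to supply the transpose $\Sigma_\pi Q\to E$. The paper then gets (3) from (2) and the adjunction, and returns to (1) via $R=E$.

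\textbf{Your route.} You run the cycle (1)$\Rightarrow$(3)$\Rightarrow$(2)$\Rightarrow$(1). Your step (1)$\Rightarrow$(3) never mentions $\Sigma_\pi$. Its forward half is postcomposition with $P(\pi)$ of the given arrow. Its backward half is the paper's reindexing-along-$s$ argument, applied to an arbitrary $R$; as you note, it does not use (1).

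\textbf{What each buys.} Your organization gives a slightly sharper fact: (1)$\Leftrightarrow$(3) needs only reindexing and the section identity. Thus (1) by itself makes $E$ the left-adjoint value at $Q$ in the preorder sense, whether or not $\Sigma_\pi$ is available. The cost is that in (3)$\Rightarrow$(2) you must extract $E\to\Sigma_\pi Q$ from the instance $R=\Sigma_\pi Q$. The paper gets that arrow for free. Its organization also yields (1)$\Rightarrow$(2) in one step, which is the construction cited in Corollary~\ref{cor:full-model-class}.

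\textbf{One wording slip.} In (3)$\Rightarrow$(2), passing from $\mathrm{id}_E$ to an arrow $Q\to P(\pi)E$ is the left-to-right direction of (3) at $R=E$, applied directly. It is not the ($\Leftarrow$) direction read contrapositively; contraposition would only give a negative statement, not the arrow you need. The inference you actually write down is correct.
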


\begin{proof}
An arrow in (1) transposes to $\Sigma_\pi Q\to E$.
The other direction always exists: reindex the adjunction unit
$Q\to P(\pi)\Sigma_\pi Q$ along $s$. Since $\pi s\sim\mathrm{id}_X$,
this gives $E\to\Sigma_\pi Q$. Thinness makes the two arrows inverse.
Conversely, the adjunction unit followed by the reindexing of (2) gives
(1). The adjunction and (2) give (3), and (3) applied to $R=E$ and its
identity arrow gives (1).
\end{proof}

\begin{definition}[iFOL-hyperdoctrine, \coqterm{IFOLHyperModel}]
\label{def:ifol-model}
Fix a typed signature $\Sigma$. An iFOL-hyperdoctrine consists of a
cartesian base model for $\mathcal L$, a thin first-order
hyperdoctrine $P$ over its base category, and raw atomic data satisfying
substitution and base congruence.
(Cf.~\cite[pp.~233--235]{Jacobs1999}).
\end{definition}

\begin{corollary}[The model class of $\mathrm{iFOL}_{\exists\varepsilon}$, \coqterm{EpsilonFull_SelectionModels.EpsilonFullModel_from_chosen_units}]
\label{cor:full-model-class}
Fix a typed signature $\Sigma$, a base model
$(\mathcal B,S_{\mathcal B},\sem{-})$ of $\mathcal L_{\exists\varepsilon}$,
a thin first-order hyperdoctrine $P$ over $\mathcal B$, and raw atomic
data satisfying substitution and base congruence. For every well-formed
$I\times A\vdash\Phi\;\mathsf{form}$, put
$Q_\Phi=\sem{\Phi}$,
$s_\Phi=\langle\mathrm{id},\sem{\varepsilon_\Phi}\rangle$, and
$E_\Phi=P(s_\Phi)Q_\Phi\cong\sem{\Phi[\varepsilon_\Phi]}$.
The additional structure of an $\mathrm{iFOL}_{\exists\varepsilon}$-model
can equivalently be supplied by a chosen arrow
$Q_\Phi\to P(\pi_{I,A})E_\Phi$ for each such $\Phi$, or by a chosen
isomorphism $\Sigma_{\pi_{I,A}}Q_\Phi\cong E_\Phi$.
No equality or naturality of the witnesses $\sem{\varepsilon_\Phi}$ is required.
\end{corollary}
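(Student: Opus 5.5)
The plan is to reduce the corollary to Proposition~\ref{prop:selection-characterization}, applied formula by formula, and then observe that the remaining data of a model is untouched. For a fixed well-formed $I\times A\vdash\Phi\;\mathsf{form}$, instantiate that proposition with $X:=\sem{I}$, $A:=\sem{A}$, $Q:=Q_\Phi=\sem{\Phi}_{I\times A}$ and $e:=\sem{\varepsilon_\Phi}$, so that $s=s_\Phi$ and $E=E_\Phi$. Conditions (1) and (2) of the proposition then say exactly that a chosen arrow $Q_\Phi\to P(\pi_{I,A})E_\Phi$ exists if and only if a chosen isomorphism $\Sigma_{\pi_{I,A}}Q_\Phi\cong E_\Phi$ exists. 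Moreover, the proof of the proposition is constructive in both directions: transposition along $\Sigma_\pi\dashv P(\pi)$ combined with the reindexed unit in one direction, and the unit followed by $P(\pi)$ of the isomorphism in the other. Because the construction turns chosen data into chosen data, it can be carried out uniformly across the whole family indexed by $\Phi$.

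The next step links $E_\Phi$ to the syntactic selected instance. Proposition~\ref{prop:selected-instance} (E-subst) supplies $\sem{\Phi[\varepsilon_\Phi]}_I\cong P(s_\Phi)(\sem{\Phi}_{I\times A})=E_\Phi$. Composing with this isomorphism, a family $\Sigma_{\pi_{I,A}}Q_\Phi\cong E_\Phi$ becomes a family $\Sigma_{\pi_{I,A}}\sem{\Phi}\cong\sem{\Phi[\varepsilon_\Phi]}$, and the converse holds as well. The resulting family is precisely the intensional Hilbert $\varepsilon$-structure $(\varepsilon\Sigma)$ of Definition~\ref{def:intensional-epsilon-structure}. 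All other components of Definition~\ref{def:intensional-epsilon-hyperdoctrine}—the base model, $P$, and the atomic data—are fixed in the hypotheses, so the only additional structure is this family. This gives the two equivalent presentations that the corollary claims.

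The closing remark, that no equality or naturality of the witnesses is required, is handled by inspection. The construction treats each $\Phi$ separately, and it uses only the fixed arrow $\sem{\varepsilon_\Phi}$ together with the section law $\pi_{I,A}\circ s_\Phi\sim\mathrm{id}$ from Proposition~\ref{prop:selected-instance}. At no point is $\sem{\varepsilon_\Phi}$ compared with $\sem{\varepsilon_\Psi}$, and no compatibility with $f^*$ is imposed on the witnesses; the Beck--Chevalley behaviour is already derived semantically, as in Proposition~\ref{prop:semantic-formula-conversion}.

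I expect the main obstacle to be bookkeeping rather than mathematics. One point is that $Q_\Phi$, $E_\Phi$ and $s_\Phi$ are defined using $\sem{\varepsilon_\Phi}$, and this arrow is independent of the typing derivation only up to $\sim_{\mathcal B}$. That dependence has to be absorbed using (PB-cong) together with thinness, which makes any two parallel isomorphisms coincide. A second point concerns the word \emph{equivalently}. It should be read as mutual constructibility of the families. Since fibres are thin, the round trip is the identity up to setoid equality, so no further coherence has to be checked.
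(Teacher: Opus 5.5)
Your proposal is correct and follows essentially the same route as the paper. Both arguments apply the constructive proof of Proposition~\ref{prop:selection-characterization} formula by formula: one direction transposes the chosen unit and inverts it against the reindexed existential unit using thinness, and the other composes the existential unit with the reindexed isomorphism. Your explicit appeal to \textup{(E-subst)} to pass between $E_\Phi$ and $\sem{\Phi[\varepsilon_\Phi]}$ only spells out what the paper builds into the statement itself.
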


\begin{proof}
The proof of Proposition~\ref{prop:selection-characterization} constructs
arrows in both directions from a chosen unit and hence an isomorphism,
using thinness. Conversely, compose the existential unit with the
reindexing of the chosen isomorphism.
\end{proof}

\begin{notation}[Validity for iFOL, \coqterm{ifol_completeness_inhabitational_gen}]
Fix a typed signature $\Sigma$ and an iFOL-hyperdoctrine $\mathcal M$
interpreting $\mathcal L$. For a well-formed context
$I\vdash\Gamma\;\mathsf{ctx}$ and formula
$I\vdash\Phi\;\mathsf{form}$ in this language, write
\[
 I\mid\Gamma\models_{\mathrm{iFOL}}^{\mathcal M}\Phi
 \quad\Longleftrightarrow\quad
 \operatorname{Hom}_{P_{\mathcal M}(\sem{I}_{\mathcal M})}
 \bigl(\sem{\Gamma}_{\mathcal M},\sem{\Phi}_{\mathcal M}\bigr)
 \text{ is inhabited}.
\]
\end{notation}

\label{sec:related-work-comparison}

\begin{definition}[Models with selection for all predicates, \coqterm{AllPredicateEpsilonFullModel}]
\label{def:all-predicate-model}
Fix a typed signature $\Sigma$ and a thin $\varepsilon$-hyperdoctrine
$\mathcal M$ interpreting $\mathcal L_{\exists\varepsilon}$. An
all-predicate selection on $\mathcal M$ is a chosen family of base arrows
\[
 e_Q:\sem{I}\longrightarrow\sem{A}
 \qquad
 (I,A\in\mathsf{BType},\ Q\in P(\sem{I}\times\sem{A}))
\]
together with isomorphisms
$\Sigma_{\pi_{I,A}}Q\cong P(\langle\mathrm{id},e_Q\rangle)Q$.
A model with these additional data is an all-predicate model.
The family need not agree with $\sem{\varepsilon_\Phi}$ when
$Q=\sem{\Phi}$ and has no imposed naturality or extensionality.
This is the ``enough epsilon terms'' condition on the interpreted base
objects, without an equality predicate.
(Cf.~\cite[p.~452]{Pasquali2019}).
\end{definition}

\begin{proposition}[The adjunction expressed by a selected instance, \coqterm{interpreted_formula_epsilon_selection_adjunction}]
\label{prop:selected-instance-adjunction}
Let $\mathcal M$ be a thin $\varepsilon$-hyperdoctrine
for $\mathrm{iFOL}_{\exists\varepsilon}$ over $\Sigma$. For
$I\times A\vdash\Phi\;\mathsf{form}$ and
$I\vdash\Psi\;\mathsf{form}$,
\[
\begin{aligned}
 &\operatorname{Hom}_{P_{\mathcal M}(\sem{I})}
   \bigl(\sem{\Phi[\varepsilon_\Phi]}_I,\sem{\Psi}_I\bigr)
   \text{ is inhabited}\\
 &\qquad\Longleftrightarrow\quad
 \operatorname{Hom}_{P_{\mathcal M}(\sem{I\times A})}
   \bigl(\sem{\Phi}_{I\times A},\sem{\Psi[\pi_1]}_{I\times A}\bigr)
   \text{ is inhabited}.
\end{aligned}
\]
(Cf.~\cite[pp.~414--415]{Pasquali2016}, \cite[p.~452]{Pasquali2019}).
\end{proposition}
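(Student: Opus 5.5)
The plan is to reduce the statement to item (3) of Proposition~\ref{prop:selection-characterization}, applied with $X=\sem{I}$, the object $\sem{A}$ in the role of $A$, $Q=\sem{\Phi}_{I\times A}$, $e=\sem{\varepsilon_\Phi}$ and $R=\sem{\Psi}_I$. Two transport steps are needed: one identifies the selected instance $E=P(s_\Phi)Q$ with $\sem{\Phi[\varepsilon_\Phi]}_I$, and the other identifies the weakened target $P(\pi_{I,A})R$ with $\sem{\Psi[\pi_1]}_{I\times A}$. Because the fibres are thin categories, inhabitation of a hom-set is invariant under replacing its source or target by an isomorphic object. Any isomorphism obtained below can therefore be used to rewrite either side of the equivalence.

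First I establish item (2) of Proposition~\ref{prop:selection-characterization} for these data. The model supplies $(\varepsilon\Sigma)$, namely $\Sigma_{\pi_{I,A}}\sem{\Phi}\cong\sem{\Phi[\varepsilon_\Phi]}_I$. Proposition~\ref{prop:selected-instance} gives $\sem{\Phi[\varepsilon_\Phi]}_I\cong P(s_\Phi)(\sem{\Phi}_{I\times A})=E$. Composing the two yields $\Sigma_\pi Q\cong E$, which is item (2) (this is also the content of Corollary~\ref{cor:full-model-class}). By Proposition~\ref{prop:selection-characterization}, item (3) then holds: $\operatorname{Hom}(E,R)$ is inhabited iff $\operatorname{Hom}(Q,P(\pi)R)$ is inhabited.

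Next I rewrite both sides. On the left, the isomorphism $E\cong\sem{\Phi[\varepsilon_\Phi]}_I$ just obtained replaces $E$. On the right, the semantic substitution lemma (Proposition~\ref{prop:semantic-substitution}) applied to $I\times A\vdash\pi_1:I$ gives $\sem{\Psi[\pi_1]}_{I\times A}\cong\pi_1^*\sem{\Psi}_I=P(\sem{\pi_1})\sem{\Psi}_I$. The base model interprets the syntactic projection as the categorical one, $\sem{\pi_1}=\pi_1=\pi_{I,A}$, since $\sem{I\times A}=\sem{I}\times\sem{A}$. If that identification only holds up to $\sim_{\mathcal B}$, I apply \textup{(PB-cong)}. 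This gives $P(\pi)R\cong\sem{\Psi[\pi_1]}_{I\times A}$, and composing with these isomorphisms turns item (3) into the displayed equivalence.

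I expect the main obstacle to be bookkeeping rather than mathematics. One must check that the projection used in the quantifier clause of Definition~\ref{def:formula-context-interpretation} is literally the one interpreting the syntactic $\pi_1$, and that the independence of $\sem{-}$ from typing derivations, together with \textup{(PB-cong)}, makes the isomorphisms well defined. Everything else is direct composition of the earlier isomorphisms, justified by thinness.
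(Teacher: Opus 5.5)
Your argument is correct and is essentially the paper's. The paper combines $(\varepsilon\Sigma)$, the adjunction $\Sigma_\pi\dashv P(\pi)$ applied to $\sem{\Psi}$, and $P(\pi)\sem{\Psi}\cong\sem{\Psi[\pi_1]}$, which is what you do once Proposition~\ref{prop:selection-characterization}(2)$\Rightarrow$(3) is unfolded; your round trip through $E=P(s_\Phi)Q$ via Proposition~\ref{prop:selected-instance} is harmless but unnecessary, since the adjunction applies directly to $\Sigma_\pi\sem{\Phi}\cong\sem{\Phi[\varepsilon_\Phi]}_I$ (and transporting inhabitation along isomorphisms needs no thinness).
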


\begin{proof}
For $\mathrm{iFOL}_{\exists\varepsilon}$, the model supplies a selection isomorphism
$\Sigma_\pi\sem{\Phi}\cong\sem{\Phi[\varepsilon_\Phi]}$.
Apply the adjunction $\Sigma_\pi\dashv P(\pi)$ to $\sem{\Psi}$,
and use $P(\pi)\sem{\Psi}\cong\sem{\Psi[\pi_1]}$.
\end{proof}

\begin{remark}[Comparison with categorical choice, \coqterm{PasqualiEnoughEpsilonTerms}]
Pasquali's condition of having enough $\varepsilon$-terms
 requires, for each admissible pair
of base objects $X,A$ and every $\varphi\in P(X\times A)$, an arrow
$e_\varphi:X\to A$ such that
\[
 P(\langle\mathrm{id}_X,e_\varphi\rangle)(\varphi)\leq\psi
 \quad\Longleftrightarrow\quad
 \varphi\leq P(\pi_{X,A})(\psi)
 \qquad(\psi\in P(X)).
\]
Here $\leq$ denotes the existence of a fibre morphism, and the admissible
objects exclude stable initial objects.
Proposition~\ref{prop:selected-instance-adjunction} gives the comparison
for interpreted base types and represented predicates, with witnesses
indexed by formulae. Selection for all semantic predicates is additional
data.
(Cf.~\cite[Def.~2.1, p.~452]{Pasquali2019}).
\end{remark}

\begin{corollary}[Selection from a total represented relation, \coqterm{EpsilonFull.Examples.represented_selection_witness_typing}, \coqterm{EpsilonFull.Examples.represented_selection_from_existence}]
\label{cor:represented-selection}
In $\mathrm{iFOL}_{\exists\varepsilon}$ over $\Sigma$, let $I,A$ be base types and $I\times A\vdash\Phi\;\mathsf{form}$. If
$I\mid\varnothing\vdash_{\exists\varepsilon}\exists_A\Phi$, then
\[
 I\vdash\varepsilon_\Phi:A,
 \qquad
 I\mid\varnothing
 \vdash_{\exists\varepsilon}\Phi[\varepsilon_\Phi].
\]
(Cf.~\cite{HilbertBernays1939}).
\end{corollary}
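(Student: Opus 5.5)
The first claim is only a typing fact. By Definition~\ref{def:epsilon-language}, $\varepsilon_\Phi=c^A_{\langle A,\Phi\rangle}$ is a labelled constant. The last rule of Definition~\ref{def:typed-base-terms} types $c_s^A$ in every base context $I$ for every label $s$. Hence $I\vdash\varepsilon_\Phi:A$ holds outright, with no use of the hypothesis. As the remark on labelled constants points out, not even well-formedness of $\Phi$ is needed here.

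For the second claim I would use Proposition~\ref{prop:epsilon-existence}. It gives a closed proof term $e_\Phi$ with $I\mid\varnothing\vdash_{\exists\varepsilon}e_\Phi:\exists_A\Phi\to\Phi[\varepsilon_\Phi]$. The hypothesis gives some $p$ with $I\mid\varnothing\vdash_{\exists\varepsilon}p:\exists_A\Phi$. The implication elimination rule of Definition~\ref{def:proof-term-calculus}, applied with the common context $\varnothing$, yields $I\mid\varnothing\vdash_{\exists\varepsilon}e_\Phi\,p:\Phi[\varepsilon_\Phi]$. This is the required derivation.

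An even shorter route uses conversion. The clause $(\varepsilon\Sigma)$ of Definition~\ref{def:formula-conversion} gives $\exists_A\Phi\simeq_I\Phi[\varepsilon_\Phi]$ for $I\times A\vdash\Phi\;\mathsf{form}$. The conversion rule then retypes $p$ itself at $\Phi[\varepsilon_\Phi]$. I would present the $e_\Phi$ version, since it makes the Hilbertian reading explicit, and note the conversion version as an alternative.

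The only point needing care is the side conditions of the rules. The conclusion $\Phi[\varepsilon_\Phi]$ must be well formed over $I$, and the empty context is trivially well formed. Well-formedness follows from $I\vdash\langle\mathrm{id}_I,\varepsilon_\Phi\rangle:I\times A$ together with $I\times A\vdash\Phi\;\mathsf{form}$ and the fact that substitution preserves well-formedness (as in the free/bound-variable example). Alternatively, it follows from the regularity part of Proposition~\ref{prop:conversion-stability} applied to $(\varepsilon\Sigma)$. I expect no real obstacle beyond this bookkeeping.
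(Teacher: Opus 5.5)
Your proposal is correct, and your ``even shorter route'' is exactly the paper's proof. There, the first judgement is the typing rule for labelled constants, and the second is obtained by retyping the given proof $p$ along the $(\varepsilon\Sigma)$ conversion $\exists_A\Phi\simeq_I\Phi[\varepsilon_\Phi]$.

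Your preferred route through $e_\Phi$ and $\to$-elimination differs in what it depends on. The conversion argument is a single step and keeps the proof term $p$ unchanged. It shows that in this calculus the corollary is essentially definitional, because $\exists_A\Phi$ and $\Phi[\varepsilon_\Phi]$ lie in the same conversion class. Your main argument instead builds the new term $e_\Phi\,p$. The term $e_\Phi$ is itself constructed in Proposition~\ref{prop:epsilon-existence} from $\mathsf{exE}$ and $\mathsf{epsI}$, together with the diagonal conversion \textup{(P)} and $(\varepsilon\mathrm{BC})$, and it never invokes $(\varepsilon\Sigma)$. So your route is heavier, but it buys something. It shows the corollary already follows from the explicit $\varepsilon$-introduction rule and the Beck--Chevalley conversion, i.e.\ from the critical axiom as a derived closed implication, independently of the $(\varepsilon\Sigma)$ conversion clause.

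Your treatment of the well-formedness side conditions is adequate. Regularity of conversion applied to $(\varepsilon\Sigma)$ is the cleanest reference within the paper.
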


\begin{proof}
The first judgement is the typing rule for a labelled constant.
The second follows by conversion along $(\varepsilon\Sigma)$.
\end{proof}

\begin{remark}[Equality and presented surjections]
Let $X,Q\in\mathsf{BType}$. Suppose the signature contains a predicate
$\mathsf{Eq}_Q$ of arity $Q\times Q$, and let $X\vdash q:Q$.
Over $Q\times X$, define
$\Phi:=\mathsf{Eq}_Q(\langle q\circ\pi_2,\pi_1\rangle)$.
Corollary~\ref{cor:represented-selection} turns a derivation of
$Q\mid\varnothing\vdash_{\exists\varepsilon}\exists_X\Phi$ into one of
$\mathsf{Eq}_Q(\langle q\circ\varepsilon_\Phi,\mathrm{id}_Q\rangle)$.
To conclude that $\sem{q}\circ\sem{\varepsilon_\Phi}
\sim_{\mathcal B}\mathrm{id}_{\sem{Q}}$, the interpretation of
$\mathsf{Eq}_Q$ must additionally reflect truth into equality of base
morphisms.

Pasquali's $\varepsilon$-doctrines
 have equality predicates, and his
categorical characterization uses a proper factorization system and
injectivity of non-initial objects. These additional assumptions are not
part of our equality-free semantics. The preceding corollary alone
therefore neither identifies the base with a category of quotients nor
proves that every epimorphism has a section.
(Cf.~\cite[Def.~2.2, p.~452]{Pasquali2019}).
\end{remark}

\section{Soundness and completeness}
\label{sec:soundness-completeness}

\subsection{The syntactic base and its interpretation}

\begin{definition}[Syntactic base-category data, \coqterm{Base_EpsilonFull_Completeness.hom_base}]
\label{def:syntactic-base}
For the base language of $\mathcal L_{\exists\varepsilon}$ over a typed signature $\Sigma$, the
syntactic base-category data are
\[
 \operatorname{Ob}_{\mathrm{syn}}:=\mathsf{BType},
 \qquad
 \operatorname{Hom}_{\mathrm{syn}}(I,A)
 :=\{t\mid I\vdash t:A\}.
\]
furthermore the identity datum is $\mathrm{id}_I$ for type $I$, the composition datum is $g\circ f$ for
$I\vdash f:J$ and $J\vdash g:A$, and the datum for the setoid equivalence is
\[
 f\sim_{I,A}g
 \quad\overset{\mathrm{def}}{\Longleftrightarrow}\quad
 f\equiv_{I,A}g.
\]
(Cf.~\cite[p.~139]{Jacobs1999}).
\end{definition}

\begin{proposition}[The syntactic base category, \coqterm{Base_EpsilonFull_Completeness.Base_CartesianCategory}]
\label{prop:syntactic-base-category}
For the base syntax of $\mathcal L_{\exists\varepsilon}$ over $\Sigma$, take base types as objects, typed terms $I\vdash t:A$ as arrows, and base-term equivalence as the hom-setoid relation. With syntactic identity and composition these determine a setoid category
$\mathcal B_{\mathrm{syn}}$ with
$\operatorname{Ob}(\mathcal B_{\mathrm{syn}})=\operatorname{Ob}_{\mathrm{syn}}$
and
$\operatorname{Hom}_{\mathcal B_{\mathrm{syn}}}(I,A)
=\operatorname{Hom}_{\mathrm{syn}}(I,A)$.
It is cartesian with terminal object $\mathbf 1$, terminal morphisms ${!}_I$,
binary products $A\times B$, projections, and pairing. Its distinguished
primitive object is $S\ne\mathbf 1$.
(Cf.~\cite[pp.~139--140]{Jacobs1999}).
\end{proposition}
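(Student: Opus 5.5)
The plan is to verify, one by one, the setoid-category axioms and then the cartesian structure, reading each directly off the clauses of Definition~\ref{def:base-term-equivalence} and the typing rules of Definition~\ref{def:typed-base-terms}.

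\emph{Hom-sets and setoid relation.} Each $\operatorname{Hom}_{\mathrm{syn}}(I,A)$ is a set of typed terms. The relation $\equiv_{I,A}$ is an equivalence relation because reflexivity, symmetry and transitivity are built into its inductive definition.

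\emph{Category structure.} The identity datum is well typed by the axiom $I\vdash\mathrm{id}_I:I$, and composition by the composition rule. Composition respects $\sim$ by the composition-congruence clause. Associativity and the two unit laws are exactly the listed equations $h\circ(g\circ f)\equiv(h\circ g)\circ f$ and $\mathrm{id}\circ f\equiv f\equiv f\circ\mathrm{id}$.

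\emph{Terminal object.} For each $I$, the morphism ${!}_I$ is typed by its axiom. Uniqueness up to $\sim$ is the clause $k\equiv_{I,\mathbf 1}{!}_I$ for every $I\vdash k:\mathbf 1$.

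\emph{Binary products.} For $A\times B$ the projections $\pi_1,\pi_2$ and the pairing $\langle f,g\rangle$ are typed by their rules. Pairing respects $\sim$ by pairing congruence. The $\beta$-laws are the equations $\pi_i\circ\langle f_1,f_2\rangle\equiv f_i$. For uniqueness, suppose $\pi_1\circ h\equiv f$ and $\pi_2\circ h\equiv g$. Then
\[
 h\equiv\langle\pi_1\circ h,\pi_2\circ h\rangle\equiv\langle f,g\rangle,
\]
using the surjective-pairing clause and then pairing congruence. If the Rocq class also demands naturality of pairing, this is the clause $\langle f,g\rangle\circ h\equiv\langle f\circ h,g\circ h\rangle$.

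\emph{Distinguished object.} The primitive object is $S$, and $S\ne\mathbf 1$ holds because they are distinct constructors of the inductive type $\mathsf{BType}$ (constructor disjointness).

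\emph{Expected obstacle.} The only delicate point is bookkeeping. Hom-sets consist of raw terms paired with a typing judgement, and a raw term such as $c_s^A$ or $\pi_1$ may be typable at several domains. One must check that each required equation is stated with the correct domain and codomain indices, so that $\equiv_{I,A}$ is applied at the right hom-setoid. Since $\equiv$ is indexed by the typing and every clause is restricted to well-typed composites, I expect this to reduce to matching each clause to its axiom. No induction on derivations should be needed beyond this.
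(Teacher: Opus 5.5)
Your proposal is correct and follows the paper's proof essentially clause by clause. Reflexivity, symmetry and transitivity give the hom-setoids, and composition congruence gives properness. Associativity and the unit equations give the category laws. The projection equations together with surjective pairing $\langle\pi_1\circ h,\pi_2\circ h\rangle\equiv h$ (and pairing congruence) give the product universal property, $k\equiv{!}_I$ gives terminality, and $S\ne\mathbf 1$ is constructor discrimination on $\mathsf{BType}$.
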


\begin{proof}
Reflexivity, symmetry, and transitivity of base-term equivalence give
the hom-setoids. Composition congruence makes composition proper;
associativity and the unit equations give the category laws.
The two projection equations and
$\langle\pi_1\circ q,\pi_2\circ q\rangle\equiv q$ give the product
universal property, and $k\equiv {!}_I$ gives terminality.
Finally, $S\ne\mathbf 1$ follows by discrimination on base types.
\end{proof}

\begin{definition}[Syntactic interpretation, \coqterm{Base_SyntacticEpsilonFullModel}]
\label{def:syntactic-interpretation}
For $\mathcal L_{\exists\varepsilon}$ over $\Sigma$, let $\mathcal B_{\mathrm{syn}}$ have base types as objects and typed base terms as arrows, modulo base-term equivalence as hom relation. Interpret the base syntax by
\[
 \sem{A}_{\mathrm{syn}}:=A,
 \qquad
 \iota^{\mathrm{syn},A}_{I,s}:=c_s^A
 \in\operatorname{Hom}_{\mathcal B_{\mathrm{syn}}}(I,A),
 \qquad
 \iota^{\mathrm{syn}}_u:=\widehat u
 \in\operatorname{Hom}_{\mathcal B_{\mathrm{syn}}}
 (\operatorname{dom}(u),\operatorname{cod}(u)).
\]
(Cf.~\cite[pp.~124--128]{Jacobs1999}).
\end{definition}

\begin{proposition}[The syntactic interpretation is a base model, \coqterm{Base_EpsilonFull_Completeness.interp_tm_term}]
\label{prop:syntactic-interpretation-identity}
For the base syntax of $\mathcal L_{\exists\varepsilon}$ over $\Sigma$, interpret each type by itself and each primitive function or labelled constant by its syntactic term. In the cartesian syntactic base category, this interpretation is a base model.
For every $I\vdash t:A$, the morphism $\sem{t}_{\mathrm{syn}}$ is represented
by $t$.
(Cf.~\cite[pp.~124--128]{Jacobs1999}).
\end{proposition}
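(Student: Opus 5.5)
The plan is to prove the representation claim first and to read off independence of the typing derivation from it.

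For every typing derivation $d$ of $I\vdash t:A$, I will show $\sem{t}_{\mathrm{syn},d}\equiv_{I,A}t$. The argument is by induction on $d$, following the recursive clauses of Definition~\ref{def:base-model} specialised to Definition~\ref{def:syntactic-interpretation}. In the leaf cases the morphism $\sem{t}_{\mathrm{syn}}$ is the term itself, so reflexivity suffices:
\begin{itemize}
\item $\mathrm{id}_I$ is sent to the identity datum $\mathrm{id}_I$;
\item ${!}_I$ is sent to the terminal arrow ${!}_I$ of Proposition~\ref{prop:syntactic-base-category};
\item $\pi_1$ and $\pi_2$ are sent to the syntactic projections;
\item $\widehat u$ is sent to $\iota^{\mathrm{syn}}_u=\widehat u$;
\item $c_s^A$ is sent to $\iota^{\mathrm{syn},A}_{I,s}=c_s^A$.
\end{itemize}
The constructor cases go as follows. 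For $g\circ f$, the interpretation is the syntactic composite $\sem{g}_{\mathrm{syn}}\circ\sem{f}_{\mathrm{syn}}$. The induction hypotheses and composition congruence of Definition~\ref{def:base-term-equivalence} give $\equiv g\circ f$. Pairing is handled the same way using pairing congruence.

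With this in hand, independence is immediate. For two derivations $d_1,d_2$ of $I\vdash t:A$, symmetry and transitivity give
\[
\sem{t}_{d_1}\equiv t\equiv\sem{t}_{d_2},
\]
which is the required coherence $\sem{t}_{d_1}\sim_{\mathcal B_{\mathrm{syn}}}\sem{t}_{d_2}$.

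The remaining base-model conditions come from earlier results. $\mathcal B_{\mathrm{syn}}$ is a cartesian setoid category by Proposition~\ref{prop:syntactic-base-category}. The distinguished object is $S$, with $S\neq\mathbf 1$ by that same proposition. The object interpretation $A\mapsto A$ satisfies the recursive equations $\sem{\mathbf 1}=\mathbf 1$, $\sem{S}=S$ and $\sem{A\times B}=\sem{A}\times\sem{B}$ definitionally, because the chosen terminal object and the chosen products of $\mathcal B_{\mathrm{syn}}$ are exactly $\mathbf 1$ and the type former $\times$. The constant data $\iota^{\mathrm{syn},A}_{I,s}$ and $\iota^{\mathrm{syn}}_u$ have the right hom-sets by the typing rules for $c_s^A$ and $\widehat u$ in Definition~\ref{def:typed-base-terms}.

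The main obstacle is bookkeeping rather than mathematics. The interpretation is defined by recursion on derivations, not on raw terms, so the induction has to range over derivations. The case analysis must match each typing rule exactly. In particular, the rule for $c_s^A$ places no constraint on the label $s$, so that case must not assume any well-formedness of $s$. In Rocq this means the structural cases must be set up so that the syntactic identity, composition and pairing operations unfold to the same raw terms. If they only agree up to $\equiv$, as can happen with the product structure, I will insert a reflexivity or congruence step at each constructor case rather than rely on conversion.
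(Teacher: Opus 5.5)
Your proposal is correct and follows the paper's proof: induction on the typing derivation shows that $\sem{t}_{\mathrm{syn}}$ is represented by $t$, and independence of the derivation follows. The only difference is that the paper obtains the representation on the nose and concludes by reflexivity of $\equiv$, whereas you prove it up to $\equiv$ and close with symmetry and transitivity, which is slightly more robust but amounts to the same argument.
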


\begin{proof}
Induction on the typing derivation shows that $\sem{t}_{\mathrm{syn}}$ is
represented by $t$. Hence two derivations of $I\vdash t:A$ have equivalent
interpretations by reflexivity of base-term equivalence.
\end{proof}

\begin{theorem}[Base soundness and completeness, \coqterm{Base_EpsilonFull_Soundness.base_soundness_equational}, \coqterm{Base_EpsilonFull_Completeness.base_completeness_equational}]
\label{thm:base-soundness-completeness}
Fix a typed function--predicate signature $\Sigma$ and the base language of $\mathcal L_{\exists\varepsilon}$. Let $\mathsf{BaseMod}(\Sigma)$ range over base models in cartesian setoid categories with a distinguished $S_{\mathcal B}\ne\mathbf 1$, arbitrary typed primitive-function and labelled-constant interpretations, and the cartesian interpretation of terms, independent of the typing derivation up to $\sim_{\mathcal B}$. For all base types $I,A$,
\[
 \bigl(\exists f\; I\vdash f:A\bigr)
 \quad\Longleftrightarrow\quad
 \left(
 \forall(\mathcal B,S_{\mathcal B},\sem{-})
 \in\mathsf{BaseMod}(\Sigma),
 \ \operatorname{Hom}_{\mathcal B}(\sem{I},\sem{A})
 \text{ is inhabited}
 \right).
\tag{inh}
\]
For all well-typed terms $I\vdash f,g:A$,
\[
 f\equiv_{I,A}g
 \quad\Longleftrightarrow\quad
 \left(
 \forall(\mathcal B,S_{\mathcal B},\sem{-})
 \in\mathsf{BaseMod}(\Sigma),
 \ \sem{f}\sim_{\mathcal B}\sem{g}
 \right).
\tag{eq}
\]
(Cf.~\cite[pp.~138--140, 147--148]{Jacobs1999}).
\end{theorem}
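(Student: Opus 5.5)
Both equivalences split into a soundness direction (left to right), proved by induction in an arbitrary base model, and a completeness direction (right to left), obtained by specializing to the syntactic base model of Proposition~\ref{prop:syntactic-interpretation-identity}.

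\emph{Soundness.} For (inh), given a typing derivation $d$ of $I\vdash f:A$ and any base model, the morphism $\sem{f}_d\in\operatorname{Hom}_{\mathcal B}(\sem{I},\sem{A})$ is defined by recursion on $d$. It inhabits the hom-set, so nothing more is needed. For (eq), I would argue by induction on the derivation of $f\equiv_{I,A}g$.
\begin{itemize}
\item Reflexivity holds because the interpretation is independent of the typing derivation.
\item Symmetry and transitivity are those of $\sim_{\mathcal B}$.
\item Composition and pairing congruence follow from properness of composition and of pairing in a cartesian setoid category.
\item The generating equations are interpreted by the corresponding laws of $\mathcal B$: associativity, the unit laws, the two projection equations, surjective pairing $\langle\pi_1 h,\pi_2 h\rangle\sim h$, uniqueness of arrows into $\mathbf 1$, and $\langle f,g\rangle\circ h\sim\langle fh,gh\rangle$. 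The last follows from the product universal property.
\end{itemize}
In each case, the interpretation of a composite or pair computed via one typing derivation must be compared with that computed via another. Derivation-independence up to $\sim_{\mathcal B}$ handles this.

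\emph{Completeness.} Instantiate the hypothesis with $(\mathcal B_{\mathrm{syn}},S,\sem{-}_{\mathrm{syn}})$. Proposition~\ref{prop:syntactic-base-category} shows it is a cartesian setoid category with $S\ne\mathbf 1$. Proposition~\ref{prop:syntactic-interpretation-identity} shows the interpretation is a base model, so it belongs to $\mathsf{BaseMod}(\Sigma)$.
\begin{itemize}
\item For (inh): an inhabitant of $\operatorname{Hom}_{\mathcal B_{\mathrm{syn}}}(I,A)$ is by definition a term $t$ with $I\vdash t:A$, which is exactly the left-hand side.
\item For (eq): the hypothesis gives $\sem{f}_{\mathrm{syn}}\sim\sem{g}_{\mathrm{syn}}$. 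By Proposition~\ref{prop:syntactic-interpretation-identity}, these are represented by $f$ and $g$ up to $\equiv$. Since $\sim$ in $\mathcal B_{\mathrm{syn}}$ is $\equiv_{I,A}$, transitivity yields $f\equiv_{I,A}g$.
\end{itemize}

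\emph{Main obstacle.} I expect the main difficulty in the soundness case for (eq). The relation $\equiv$ is stated on terms, but $\sem{-}$ is defined on typing derivations. One must check that every equation's two sides, typed by whichever derivations, land in the same hom-set with related interpretations. Each generating rule fixes a specific typing shape for its subterms; for example, associativity uses $I\vdash f:J$, $J\vdash g:K$, $K\vdash h:D$. My plan is to invert the typing derivations of both sides to recover derivations of the subterms in that shape. I would then use coherence to replace arbitrary derivations by canonical ones before applying the categorical law.

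A secondary subtlety is the raw constants $c_s^A$, which are typeable at every $I$. They pose no problem, since $\iota^A_{I,s}$ is fixed per $I$ and the induction treats them as atoms.
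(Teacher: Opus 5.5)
Your proposal is correct and follows the paper's proof. The forward directions go by induction on typing and on the derivation of $\equiv_{I,A}$, and the reverse directions instantiate the hypothesis with the syntactic base model $\mathcal B_{\mathrm{syn}}$, whose arrows are typed terms, whose interpretation sends a term to itself, and whose hom-relation is exactly $\equiv_{I,A}$. One simplification: the typing inversion you anticipate is unnecessary, because each generating and congruence clause of $\equiv$ already carries the typings of its subterms, so derivation-independence alone lets you compute both sides via canonical derivations.
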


\begin{proof}
The forward implications are induction on typing and on base-term
equivalence, respectively. For both reverse implications, use the
syntactic base model: its arrows $I\to A$ are precisely the well-typed
terms, its interpretation sends a term to itself, and its hom-setoid
relation is precisely $\equiv_{I,A}$. Thus an inhabited hom-set gives a
typed term, and equality of the two interpretations gives the base equation.
\end{proof}

\subsection{The full epsilon term model}

\begin{definition}[Validity in thin $\varepsilon$-hyperdoctrines, \coqterm{EpsilonFull_Completeness.epsilon_adjoint_completeness_inhabitational_gen}]
\label{def:semantic-validity}
Let $\mathcal M$ be a thin $\varepsilon$-hyperdoctrine
interpreting $\mathcal L_{\exists\varepsilon}$ over a typed signature $\Sigma$. For
$I\vdash\Gamma\;\mathsf{ctx}$ and $I\vdash\Phi\;\mathsf{form}$, define
\[
 I\mid\Gamma\models_{\exists\varepsilon}^{\mathcal M}\Phi
 \quad\Longleftrightarrow\quad
 \operatorname{Hom}_{P_{\mathcal M}(\sem{I}_{\mathcal M})}
 \bigl(\sem{\Gamma}_{\mathcal M},\sem{\Phi}_{\mathcal M}\bigr)
 \text{ is inhabited},
\]
and write $I\mid\Gamma\models_{\exists\varepsilon}\Phi$ when
$I\mid\Gamma\models_{\exists\varepsilon}^{\mathcal M}\Phi$ for every such
$\mathcal M$.
(Cf.~\cite[p.~247]{Jacobs1999}).
\end{definition}

\begin{definition}[The term model of the
$\mathrm{iFOL}_{\exists\varepsilon}$-language, \coqterm{TermModel_EpsilonFullModel}]
\label{def:syntactic-term-model}
Fix a typed function--predicate signature $\Sigma$ and its finite-tree language $\mathcal L_{\exists\varepsilon}$. Fix the corresponding
intuitionistic intensional Hilbert $\varepsilon$-calculus
$\vdash_{\exists\varepsilon}$. Let
$\mathcal B_{\mathrm{syn}}$ have base types as objects and typed
base terms as arrows, with base-term equivalence and the identity
interpretation of syntax.
For
$I\in\mathsf{BType}$, define the fibre
$P_{\exists\varepsilon}(I)$ by
\[
 \operatorname{Ob}\bigl(
 P_{\exists\varepsilon}(I)\bigr)
 :=\{\Phi\mid I\vdash\Phi\;\mathsf{form}\},
 \qquad
 \operatorname{Hom}_{
 P_{\exists\varepsilon}(I)}
 (\Phi,\Psi)
 :=\{p\mid
 I\mid\varnothing\vdash_{\exists\varepsilon}
 p:\Phi\to\Psi\}.
\]
For $p:\Phi\to\Psi$ and $q:\Psi\to\Theta$, define
\[
 \mathrm{id}_\Phi:=\lambda x.x,
 \qquad
 q\circ p
 :=(\lambda f.\lambda g.\lambda x.f(gx))\,q\,p.
\]
All parallel fibre morphisms are equivalent. For
$I\vdash f:J$, syntactic substitution defines
\[
 P_{\exists\varepsilon}(f)(\Phi)
 :=\Phi[f],
 \qquad
 P_{\exists\varepsilon}(f)(p)
 :=p[f].
\]
The propositional operations and the two quantifiers are given by
$\top,\bot,\land,\lor,\to,\exists_A$, and $\forall_A$, respectively. The
atomic interpretation sends every well-formed atom $R(t)$ to itself and
every ill-typed raw atom to $\top$; it is therefore defined on all raw
atomic codes required by the semantic interface.
Write $\mathcal T_{\exists\varepsilon}$ for this construction, with
$\Sigma$ suppressed. Its base interpretation sends
every well-typed base term to itself. In particular,
\[
 \sem{\varepsilon_\Phi}_{\mathcal T_{\exists\varepsilon}}
 =c^A_{\langle A,\Phi\rangle}
 =\varepsilon_\Phi
 \in\operatorname{Hom}_{\mathcal B_{\mathrm{syn}}}(I,A)
 \qquad
 (I\times A\vdash\Phi\;\mathsf{form}).
\]
(Cf.~\cite[pp.~234--235]{Jacobs1999}).
\end{definition}

\begin{proposition}[The term model as a thin $\varepsilon$-hyperdoctrine, \coqterm{TermModel_AdjointEpsilonFullFibration}]
\label{prop:syntactic-term-model}
For every typed function--predicate signature $\Sigma$, the construction
$\mathcal T_{\exists\varepsilon}$ is a thin $\varepsilon$-hyperdoctrine
over $\mathcal B_{\mathrm{syn}}$. Its base,
atomic, and labelled-constant interpretations send well-typed syntax to
itself; ill-typed raw atoms are sent to $\top$. Its fibre arrows
$\Phi\to\Psi$ are closed proofs of the implication, with all parallel
arrows identified, and reindexing is syntactic substitution.
\end{proposition}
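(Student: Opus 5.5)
We verify the clauses of Definitions~\ref{def:base-model}, \ref{def:goodfibration}, \ref{def:predicate-interpretation} and \ref{def:intensional-epsilon-hyperdoctrine} for $\mathcal T_{\exists\varepsilon}$, working upward from the base. The base part is Propositions~\ref{prop:syntactic-base-category} and~\ref{prop:syntactic-interpretation-identity}. They give a cartesian setoid category with $S\ne\mathbf 1$ and a derivation-independent interpretation sending each term to itself; in particular $\sem{\varepsilon_\Phi}=\varepsilon_\Phi$.

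\textbf{Fibres.} For each $I$, the fibre $P_{\exists\varepsilon}(I)$ is a setoid category with the total relation on homs, so it is thin. Every categorical law then holds trivially, and the remaining task is only to exhibit arrows. Identity and composition are typed by the $\to$ rules. The bicartesian closed structure is given by closed proof terms:
\begin{itemize}
\item $\lambda x.\mathsf{tt}$ gives the terminal arrow and $\mathsf{abort}$ the initial one;
\item $\mathsf{fst}$, $\mathsf{snd}$ and pairing give products;
\item $\mathsf{inl}$, $\mathsf{inr}$ and $\mathsf{case}$ give coproducts;
\item evaluation and currying give exponentials.
\end{itemize}
Currying needs weakening of a context by an extra assumption, which follows by induction on derivations.

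\textbf{Reindexing.} $P(f)$ is well defined on objects by the substitution clauses of formation. It is well defined on arrows by Proposition~\ref{prop:derivation-substitution}: $\varnothing[f]=\varnothing$ and $(\Phi\to\Psi)[f]=\Phi[f]\to\Psi[f]$. The required isomorphisms are obtained as follows:
\begin{itemize}
\item $P(\mathrm{id})$, $P(g\circ f)$ and (PB-cong) come from the substitution clauses of formula conversion, transported into arrows $\lambda x.x$ by the conversion rule;
\item preservation of the propositional structure holds on the nose, since substitution commutes with the connectives;
\item Beck--Chevalley is again literal: $(\exists_A\Phi)[f]=\exists_A(\Phi[f_A^+])$, and similarly for $\forall$.
\end{itemize}

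\textbf{Quantifier adjunctions.} We must give arrows in both directions between the hom-sets, since thinness reduces an adjunction to a logical equivalence of inhabitation.
\begin{itemize}
\item For $\exists_A\dashv P(\pi)$: given $p:\exists_A\Phi\to\Psi$ over $I$, weaken it along $\pi_1$ and apply it to $\mathsf{exI}(\pi_2,u)$, using the conversion (P) of Proposition~\ref{prop:diagonal-calculation} exactly as in Example~\ref{ex:existential-prenex}. Conversely, from $q:\Phi\to\Psi[\pi_1]$ build $\lambda z.\mathsf{exE}(z,u.\,q'u)$, where $q'$ is $q$ in the extended context; $q$ is closed, so this weakening is harmless.
\item For $P(\pi)\dashv\forall_A$: in one direction use $\mathsf{allI}$; in the other use $\mathsf{allE}(p[\pi_1],\pi_2)$ with the same diagonal conversion.
\end{itemize}

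\textbf{Atoms and the $\varepsilon$-structure.} (At-subst) is literal for well-typed atoms. For ill-typed atoms it holds because $\top$ is sent to $\top$, provided $t\circ f$ stays ill typed exactly when $t$ is; we expect to settle this by a case analysis on the typing. (At-cong) is the atomic conversion clause. For $(\varepsilon\Sigma)$, the isomorphism $\Sigma_\pi\sem\Phi\cong\sem{\Phi[\varepsilon_\Phi]}$ is literally $\exists_A\Phi\leftrightarrow\Phi[\varepsilon_\Phi]$. This is supplied by $e_\Phi$ and $i_\Phi$ of Proposition~\ref{prop:epsilon-existence}, or directly by the conversion $(\varepsilon\Sigma)$.

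\textbf{Main obstacle.} We expect the hardest part to be the bookkeeping that makes reindexing a functor up to isomorphism. Raw substitution is neither strictly unital nor associative, so every comparison has to go through conversion, and the regularity part of Proposition~\ref{prop:conversion-stability} must guarantee that the formulae involved are well formed. The second delicate point is weakening inside the adjunction proofs, which requires weakening and exchange lemmas for $\Gamma[\pi_1]$. We would prove these first, as general structural lemmas.
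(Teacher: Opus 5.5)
Your proposal is correct and takes the same route as the paper's proof, with more detail: proof-term rules give the bicartesian closed fibres, and Proposition~\ref{prop:derivation-substitution} gives reindexing. The quantifier rules together with the substitution conversions give the adjunctions and Beck--Chevalley, the total hom relation gives thinness, and Proposition~\ref{prop:epsilon-existence} gives $(\varepsilon\Sigma)$. The one point you leave open, that $I\vdash t\circ f:\operatorname{ar}(R)$ holds exactly when $J\vdash t:\operatorname{ar}(R)$, follows because a typable raw base term has a unique codomain for a given domain, so inverting the composition rule recovers the intermediate type $J$; the same fact settles (At-cong) when $A\ne\operatorname{ar}(R)$, since then both atoms are ill typed and are sent to $\top$.
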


\begin{proof}
The proof-term rules give the bicartesian closed structure of the
fibres, while Proposition~\ref{prop:derivation-substitution} gives semantic
substitution. The quantifier rules and substitution conversions give the
adjunctions and Beck--Chevalley isomorphisms. Since all parallel fibre
morphisms are equivalent, the fibres are thin. Finally,
Proposition~\ref{prop:epsilon-existence} gives the
required $\varepsilon$-structure:
\[
 \Sigma_{\pi_{I,A}}
 \bigl(\sem{\Phi}_{\mathcal T_{\exists\varepsilon}}\bigr)
 =\exists_A\Phi
 \cong\Phi[\varepsilon_\Phi]
 =\sem{\Phi[\varepsilon_\Phi]}_{\mathcal T_{\exists\varepsilon}}.
\]
\end{proof}

\begin{definition}[Context formula, \coqterm{EpsilonFull_Completeness.h_make_ctx_ty}]
\label{def:context-formula}
For a well-formed logical context $I\vdash\Gamma\;\mathsf{ctx}$ in $\mathrm{iFOL}$ or $\mathrm{iFOL}_{\exists\varepsilon}$, its context formula in the same language is defined by
\[
 \varnothing^\times:=\top,
 \qquad
 (\Phi,\Gamma)^\times:=\Gamma^\times\land\Phi.
\]
(Cf.~\cite[pp.~224, 234]{Jacobs1999}).
\end{definition}

\begin{remark}[Interpretation in the syntactic term model, \coqterm{EpsilonFull_Completeness.term_model_interp_id}, \coqterm{EpsilonFull_Completeness.term_model_interp_ctx_id}]
\label{rem:term-model-interpretation}
The term model interprets every base type and every well-formed formula by
itself, and every well-formed context by its context formula:
\[
 \sem{I}_{\mathcal T_{\exists\varepsilon}}
 =I,
 \qquad
 \sem{\Phi}_{\mathcal T_{\exists\varepsilon}}
 =\Phi,
 \qquad
 \sem{\Gamma}_{\mathcal T_{\exists\varepsilon}}
 =\Gamma^\times.
\]
These equations follow by induction on $I$, $\Phi$, and $\Gamma$.
\end{remark}

\begin{proposition}[Derivation extraction, \coqterm{DerivationExtraction.extract_concrete}]
\label{prop:term-model-extraction}
Let $\vdash_\star$ be a typed proof-term calculus with raw proof-term set $\mathsf{proofterm}$, containing the assumption
and weakening rules, $\top$- and $\land$-introduction, and
$\to$-elimination. Suppose that, for every base type $I$, its syntactic
fibre $P_\star(I)$ has well-formed formulae over $I$ as objects and
\[
 \operatorname{Hom}_{P_\star(I)}(\Phi,\Psi)
 :=\{p\in \mathsf{proofterm}\mid \left ( I\mid\varnothing\right )\vdash_\star p:\Phi\to\Psi\}.
\]
If $I\vdash\Gamma\;\mathsf{ctx}$ and
$I\vdash\Phi\;\mathsf{form}$, then
\[
 \operatorname{Hom}_{P_\star(I)}(\Gamma^\times,\Phi)
 \text{ is inhabited}
 \quad\Longrightarrow\quad
 \exists p\;I\mid\Gamma\vdash_\star p:\Phi.
\]
(Cf.~\cite[pp.~234--235]{Jacobs1999}).
\end{proposition}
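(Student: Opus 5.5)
Given an inhabitant $p$ of $\operatorname{Hom}_{P_\star(I)}(\Gamma^\times,\Phi)$, that is, a closed derivation $I\mid\varnothing\vdash_\star p:\Gamma^\times\to\Phi$, the plan is to apply $p$ to a proof term $c_\Gamma$ that derives the context formula $\Gamma^\times$ from the assumptions in $\Gamma$. The required derivation is then $p\,c_\Gamma$.

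The first step is to prove, by induction on the list $\Gamma$, that there is a raw proof term $c_\Gamma$ with $I\mid\Gamma\vdash_\star c_\Gamma:\Gamma^\times$.
\begin{itemize}
\item For $\Gamma=\varnothing$ we have $\Gamma^\times=\top$, so $c_\varnothing:=\mathsf{tt}$ by $\top$-introduction.
\item For $\Gamma=(x:\Psi,\Gamma')$, the induction hypothesis gives $I\mid\Gamma'\vdash_\star c_{\Gamma'}:\Gamma'^\times$. Weakening transports this to $I\mid x:\Psi,\Gamma'\vdash_\star c'_{\Gamma'}:\Gamma'^\times$, where $c'_{\Gamma'}$ is the suitably shifted term (an index shift in the de Bruijn presentation of the Rocq development). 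The assumption rule gives $I\mid x:\Psi,\Gamma'\vdash_\star x:\Psi$. Then $\land$-introduction yields $\langle c'_{\Gamma'},x\rangle:\Gamma'^\times\land\Psi=(\Psi,\Gamma')^\times$, matching the order of Definition~\ref{def:context-formula}.
\end{itemize}
Well-formedness of every displayed formula and context follows from $I\vdash\Gamma\;\mathsf{ctx}$. This also gives $I\vdash\Gamma^\times\;\mathsf{form}$, by the formation rules for $\top$ and $\land$.

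The second step weakens the closed derivation of $p$ from the empty context to $\Gamma$, giving $I\mid\Gamma\vdash_\star p^{w}:\Gamma^\times\to\Phi$. For a closed term no variable shifting is needed, so $p^w=p$ up to the calculus's weakening operation. Applying $\to$-elimination then gives $I\mid\Gamma\vdash_\star p^w\,c_\Gamma:\Phi$, which is the claim.

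The main obstacle is the weakening step. The statement only assumes that the calculus contains weakening, so the plan relies on weakening in a form that inserts an assumption at the head of a context, since that is what the inductive step for $c_\Gamma$ needs. Weakening from $\varnothing$ to $\Gamma$ then follows by iterating this insertion. If the available weakening rule instead appends at the tail, I would reorder the construction of $c_\Gamma$, or first derive head-insertion as an admissible rule. Everything else is direct rule application.
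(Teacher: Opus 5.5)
Your proposal is correct and follows essentially the same route as the paper. The paper also weakens the closed derivation of $\Gamma^\times\to\Phi$ to $\Gamma$ and builds $r_\varnothing:=\mathsf{tt}$, $r_{(\Theta,\Delta)}:=\langle r_\Delta,x_\Theta\rangle$ with $r_\Delta$ weakened by head insertion; it then concludes with $\to$-elimination, giving $q\,r_\Gamma$.
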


\begin{proof}
Choose $q$ with
$I\mid\varnothing\vdash_\star q:\Gamma^\times\to\Phi$.
Weakening gives the same judgement over $\Gamma$. Define
$r_\varnothing:=\mathsf{tt}$ and, for
$\Gamma=(\Theta,\Delta)$,
$r_\Gamma:=\langle r_\Delta,x_\Theta\rangle$, where $r_\Delta$ is weakened
to $\Gamma$ and $x_\Theta$ is the assumption of type $\Theta$. Then
$I\mid\Gamma\vdash_\star r_\Gamma:\Gamma^\times$, and therefore
$I\mid\Gamma\vdash_\star q\,r_\Gamma:\Phi$.
\end{proof}

\begin{theorem}[Soundness and completeness of
$\mathrm{iFOL}_{\exists\varepsilon}$, \coqterm{CanonicalEpsilonFullSemantics.soundness}, \coqterm{CanonicalEpsilonFullSemantics.completeness}]
\label{thm:soundness}
\label{thm:completeness}
Fix a typed signature $\Sigma$ and the calculus $\mathrm{iFOL}_{\exists\varepsilon}$ on $\mathcal L_{\exists\varepsilon}$. Then
\begin{enumerate}
\item
For every thin $\varepsilon$-hyperdoctrine $\mathcal M$
interpreting this language,
every derivation
$I\mid\Gamma\vdash_{\exists\varepsilon}p:\Phi$
has an interpretation
\[
 \sem{p}_{\mathcal M}\in
 \operatorname{Hom}_{P_{\mathcal M}(\sem{I}_{\mathcal M})}
 \bigl(\sem{\Gamma}_{\mathcal M},\sem{\Phi}_{\mathcal M}\bigr).
\]
\item
For every $I\vdash\Gamma\;\mathsf{ctx}$ and
$I\vdash\Phi\;\mathsf{form}$,
\[
 I\mid\Gamma\vdash_{\exists\varepsilon}\Phi
 \quad\Longleftrightarrow\quad
 I\mid\Gamma\models_{\exists\varepsilon}\Phi.
\]
\end{enumerate}
\end{theorem}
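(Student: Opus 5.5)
Part (1) will be proved by induction on the derivation of $I\mid\Gamma\vdash_{\exists\varepsilon}p:\Phi$, defining $\sem{p}_{\mathcal M}$ clause by clause as a fibre morphism $\sem{\Gamma}\to\sem{\Phi}$ in $P_{\mathcal M}(\sem{I})$. Variables are iterated projections out of the product $\sem{\Gamma}_I=\sem{\Gamma'}_I\times\sem{\Psi}_I$ (newest assumption second). The propositional rules use the bicartesian closed structure of the fibre, and in particular $\mathsf{case}$ needs distributivity of $\times$ over $+$, which holds because the fibres are cartesian closed. For the quantifier rules I would move between contexts using (Subst): first $\sem{\Gamma[\pi_1]}\cong P(\pi)\sem{\Gamma}$, using that $P(\pi)$ preserves products and $\top$ by clause 3 of Definition~\ref{def:goodfibration}; then $\sem{\Phi[a]}\cong P(\langle\mathrm{id},\sem a\rangle)\sem{\Phi}$. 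With these, $\mathsf{allI}$ and $\mathsf{exE}$ are transposition across $P(\pi)\dashv\Pi_\pi$ and $\Sigma_\pi\dashv P(\pi)$. For $\mathsf{exE}$ one also needs the Frobenius law, to absorb the context $\sem{\Gamma}$ into $\Sigma_\pi$. I would derive it from the exponential: transpose the premise to $\sem{\Phi}\to P(\pi)(\sem{\Psi}^{\sem{\Gamma}})$, using preservation of exponentials, apply $\Sigma_\pi\dashv P(\pi)$, and uncurry. For $\mathsf{allE}$ and $\mathsf{exI}$, reindex the counit, respectively the unit, along $s=\langle\mathrm{id},\sem a\rangle$, and use $\pi s\sim\mathrm{id}$ together with (PB-cong) and functoriality.

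The $\varepsilon$-rules reduce to the existential case. For $\mathsf{epsI}$, first build $\sem{\mathsf{exI}(a,p)}:\sem{\Gamma}\to\Sigma_\pi\sem{\Phi}$ and then compose with $(\varepsilon\Sigma)$. For $\mathsf{epsE}$, precompose the $\mathsf{exE}$-construction with the inverse of $(\varepsilon\Sigma)$ applied to $\sem{p}$. The conversion rules use Proposition~\ref{prop:semantic-formula-conversion} and its componentwise extension to contexts. Thinness means no coherence equations have to be checked, only the existence of arrows, so all of this is bookkeeping.

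For part (2), the direction $\Rightarrow$ is immediate from (1): a derivation gives an inhabitant $\sem{p}_{\mathcal M}$ in every model. For $\Leftarrow$, I would instantiate validity at the term model $\mathcal T_{\exists\varepsilon}$, which is a thin $\varepsilon$-hyperdoctrine by Proposition~\ref{prop:syntactic-term-model}. By Remark~\ref{rem:term-model-interpretation}, $\sem{\Gamma}_{\mathcal T}=\Gamma^\times$ and $\sem{\Phi}_{\mathcal T}=\Phi$, so validity yields an inhabitant of $\operatorname{Hom}_{P_{\exists\varepsilon}(I)}(\Gamma^\times,\Phi)$. Proposition~\ref{prop:term-model-extraction} then produces $p$ with $I\mid\Gamma\vdash_{\exists\varepsilon}p:\Phi$. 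Its hypotheses hold for $\vdash_{\exists\varepsilon}$, with weakening obtained by induction on derivations.

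The main obstacle is the quantifier and $\varepsilon$-elimination cases of soundness, namely the Frobenius step and keeping the reindexing isomorphisms for $\Gamma[\pi_1]$ and $\Psi[\pi_1]$ consistent. I would first prove a helper lemma: for any $I\mid\Gamma\vdash\Psi$ data, $\sem{\Gamma[f]}\cong f^*\sem{\Gamma}$ by induction on $\Gamma$. The Frobenius step I would derive once, via exponentials, and reuse it for both $\mathsf{exE}$ and $\mathsf{epsE}$.
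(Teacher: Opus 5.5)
Your proposal is correct and follows the paper's proof. Soundness goes by induction on derivations, with $\mathsf{epsI}$ and $\mathsf{epsE}$ handled by the forward and inverse directions of $(\varepsilon\Sigma)$, and completeness instantiates validity at the term model $\mathcal T_{\exists\varepsilon}$, then uses its interpretation identities and derivation extraction. Your explicit Frobenius step via exponentials, the distributivity used for $\mathsf{case}$, and the context-substitution isomorphism spell out details the paper leaves inside ``induction on the derivation.''
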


\begin{proof}
(1) Induction on the derivation. The two $\varepsilon$-cases use the two
directions of \textup{$(\varepsilon\Sigma)$}.

(2) The forward implication follows from (1). Conversely, assume
$I\mid\Gamma\models_{\exists\varepsilon}^{\mathcal M}\Phi$ for every thin
$\varepsilon$-hyperdoctrine $\mathcal M$ interpreting the fixed language.
This applies in particular to
$\mathcal T_{\exists\varepsilon}$ by
Proposition~\ref{prop:syntactic-term-model}. By
Remark~\ref{rem:term-model-interpretation}, it gives
\[
 \operatorname{Hom}_{P_{\exists\varepsilon}(I)}
 (\Gamma^\times,\Phi)
 \text{ is inhabited}.
\]
Proposition~\ref{prop:term-model-extraction} yields
$I\mid\Gamma\vdash_{\exists\varepsilon}\Phi$.
\end{proof}

\subsection{Completeness with selection for all predicates}

\begin{theorem}[All-predicate soundness and completeness, \coqterm{all_predicate_epsilonfull_soundness_completeness}]
\label{thm:all-predicate-completeness}
Fix a typed signature $\Sigma$, the calculus
$\mathrm{iFOL}_{\exists\varepsilon}$ on $\mathcal L_{\exists\varepsilon}$,
a well-formed context $I\vdash\Gamma\;\mathsf{ctx}$, and a well-formed
formula $I\vdash\Phi\;\mathsf{form}$. Then
\[
 I\mid\Gamma\vdash_{\exists\varepsilon}\Phi
 \quad\Longleftrightarrow\quad
 \forall\mathcal M,\quad
 \operatorname{Hom}_{P_{\mathcal M}(\sem{I})}
       (\sem{\Gamma},\sem{\Phi})\text{ is inhabited},
\]
where $\mathcal M$ ranges over thin $\varepsilon$-hyperdoctrines for this
language equipped with a chosen selector $e_Q:\sem{J}\to\sem{A}$ and
an isomorphism $\Sigma_\pi Q\cong
P(\langle\mathrm{id},e_Q\rangle)Q$ for every base type pair $J,A$ and
every $Q\in P_{\mathcal M}(\sem{J}\times\sem{A})$.
The syntactic term model itself admits these additional data.
\end{theorem}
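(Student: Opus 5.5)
The forward implication needs nothing new. An all-predicate model is in particular a thin $\varepsilon$-hyperdoctrine. Part (1) of Theorem~\ref{thm:soundness} therefore gives an interpretation $\sem{p}_{\mathcal M}\in\operatorname{Hom}(\sem{\Gamma},\sem{\Phi})$ for any derivation $I\mid\Gamma\vdash_{\exists\varepsilon}p:\Phi$, and we simply forget the extra selectors. The reverse implication quantifies over a smaller class of models, so it is the stronger statement. I would reduce it to the proof of Theorem~\ref{thm:completeness}, part (2), by showing that $\mathcal T_{\exists\varepsilon}$ belongs to this smaller class. Once that is done, validity in every all-predicate model gives in particular that $\operatorname{Hom}_{P_{\exists\varepsilon}(I)}(\Gamma^\times,\Phi)$ is inhabited, by Remark~\ref{rem:term-model-interpretation}. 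Proposition~\ref{prop:term-model-extraction} then yields a derivation.

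The real content is the final sentence of the theorem: constructing the selectors on the term model. Fix base types $J,A$ and an object $Q$ of $P_{\exists\varepsilon}(J\times A)$. Objects of this fibre are exactly the well-formed formulae $J\times A\vdash Q\;\mathsf{form}$. So every semantic predicate is represented by a formula, and the term model has no predicates beyond the syntactic ones. I would take $e_Q:=\varepsilon_Q=c^A_{\langle A,Q\rangle}$, which is typed $J\vdash\varepsilon_Q:A$ by the labelled-constant rule. We then need an isomorphism $\Sigma_\pi Q\cong P_{\exists\varepsilon}(\langle\mathrm{id},\varepsilon_Q\rangle)Q$, i.e.\ $\exists_A Q\cong Q[\varepsilon_Q]$ in the thin fibre. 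Proposition~\ref{prop:epsilon-existence} supplies closed proof terms $e_Q$ and $i_Q$ of both implications. By thinness these two arrows are mutually inverse, so they form the required isomorphism. Alternatively, the isomorphism is just the $(\varepsilon\Sigma)$-component of the structure from Proposition~\ref{prop:syntactic-term-model}.

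The main obstacle is bookkeeping rather than mathematics. The all-predicate interface is indexed by objects of $P_{\mathcal M}(\sem{J}\times\sem{A})$, while formulae over $J\times A$ are indexed by base types. In the term model $\sem{J}\times\sem{A}=J\times A$ definitionally, as a type, so the two indexings agree. Nevertheless, $\Sigma_\pi$ and reindexing in the interface must be checked to unfold to $\exists_A$ and to the syntactic substitution $Q[\langle\mathrm{id}_J,\varepsilon_Q\rangle]=Q[\varepsilon_Q]$, possibly up to (PB-cong) and (Subst) isomorphisms. Thinness makes any such coherence trivial once arrows in both directions exist. I would verify this unfolding first, because it is the only place where the argument could fail. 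Everything else is a direct reuse of the completeness argument.
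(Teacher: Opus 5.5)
Your proposal is correct and follows essentially the same route as the paper. Soundness is obtained by forgetting the selectors, and completeness by equipping $\mathcal T_{\exists\varepsilon}$ with $e_Q:=\varepsilon_Q$; the required isomorphism $\exists_A Q\cong Q[\varepsilon_Q]$ comes from the $(\varepsilon\Sigma)$ conversion, which the paper cites directly where you use Proposition~\ref{prop:epsilon-existence} plus thinness, an equivalent way to get it. The argument then concludes, as in the paper, with the term-model interpretation identities and derivation extraction.
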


\begin{proof}
Soundness applies after forgetting the additional selection data.
For completeness, an object $Q$ of the syntactic fibre over $J\times A$
is a well-formed formula together with its formation derivation.
Choose its syntactic epsilon term. The conversion
$\exists_A Q\simeq_J Q[\varepsilon_Q]$ gives the required selection
isomorphism. This makes the term model an all-predicate model, so the
semantic assumption applies to it and gives a closed proof of
$\Gamma^\times\to\Phi$. Context reification and application extract
$I\mid\Gamma\vdash_{\exists\varepsilon}\Phi$ as before.
\end{proof}

\subsection{The iFOL term model and completeness}

\begin{proposition}[The term model of $\mathrm{iFOL}$, \coqterm{TermModel_IFOLHyperModel}]
\label{prop:ifol-term-model}
Fix a typed function--predicate signature $\Sigma$. For $\mathrm{iFOL}$, take its base types and typed terms as the syntactic base, well-formed formulae as fibre objects, and closed implication proofs as fibre arrows; identify all parallel fibre arrows. Use syntactic substitution for reindexing and interpret well-typed syntax by itself, with ill-typed raw atoms sent to $\top$.
Write $\mathcal T_{\mathrm{iFOL}}$ for the resulting model and $P_{\mathrm{iFOL}}$ for its
fibration. Then $\mathcal T_{\mathrm{iFOL}}$ is an iFOL-hyperdoctrine.
For base types $I,A$ and a well-formed formula
$I\times A\vdash\Phi\;\mathsf{form}$ of $\mathcal L$, with
$\pi_{I,A}:I\times A\to I$, its quantifiers satisfy
\[
 \Sigma_{\pi_{I,A}}(\Phi):=\exists_A\Phi,
 \qquad \Pi_{\pi_{I,A}}(\Phi):=\forall_A\Phi,
 \qquad \Sigma_{\pi_{I,A}}\dashv P_{\mathrm{iFOL}}(\pi_{I,A})\dashv\Pi_{\pi_{I,A}}.
\]
Base types and well-formed formulae interpret as themselves, and
$\sem{\Gamma}_{\mathcal T_{\mathrm{iFOL}}}=\Gamma^\times$ for every well-formed context.
(Cf.~\cite[pp.~234--235]{Jacobs1999}).
\end{proposition}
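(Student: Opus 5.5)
The plan is to mirror the proof of Proposition~\ref{prop:syntactic-term-model}, deleting the $\varepsilon$-part and checking each clause of Definition~\ref{def:goodfibration} directly against the iFOL rules of Definition~\ref{def:ifol-calculus}. The base is handled by the epsilon-free analogue of Propositions~\ref{prop:syntactic-base-category} and~\ref{prop:syntactic-interpretation-identity}. Base types and typed $\mathcal L$-terms modulo base equivalence form a cartesian setoid category with $S\ne\mathbf 1$. The identity interpretation sends each typed term to itself, hence is independent of typing derivations, so it is a base model of $\mathcal L$.

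For the fibres, I first note that identity $\lambda x.x$ and composition $(\lambda f.\lambda g.\lambda x.f(gx))\,q\,p$ are derivable. Since all parallel arrows are identified, the category laws are automatic. The bicartesian closed structure is then exhibited by closed proof terms, and thinness removes every equational obligation:
\begin{itemize}
\item $\top$ with $\lambda x.\mathsf{tt}$, and $\bot$ with $\lambda x.\mathsf{abort}(x)$;
\item $\land$ with $\mathsf{fst},\mathsf{snd}$ and pairing;
\item $\lor$ with $\mathsf{inl},\mathsf{inr}$ and $\mathsf{case}$;
\item $\to$ with currying and uncurrying through $\lambda$ and application.
\end{itemize}

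Reindexing along $I\vdash f:J$ is $\Phi\mapsto\Phi[f]$ on objects and $p\mapsto p[f]$ on arrows. This needs the iFOL analogue of Proposition~\ref{prop:derivation-substitution}, proved by induction on derivations. Each required isomorphism is obtained as a conversion $\Phi\simeq\Psi$ turned into arrows $\lambda x.x$ in both directions by the conversion rule:
\begin{itemize}
\item $P(\mathrm{id})\cong\mathrm{id}$ and $P(g\circ f)\cong P(f)P(g)$ come from the substitution identity and composition clauses;
\item (PB-cong) comes from the congruence-of-substitution clause;
\item preservation of $\top,\bot,\land,\lor,\to$ holds because substitution commutes strictly with the connectives.
\end{itemize}

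Atomic data send a well-formed $R(t)$ to itself and ill-typed atoms to $\top$. (At-cong) is the atomic conversion clause. (At-subst) holds on the nose for typed $t$, since $R(t)[f]=R(t\circ f)$. For ill-typed raw $t$, I would argue that $t\circ f$ is still ill-typed, so both sides are $\top$. This is the one spot I expect to need case analysis on typeability of raw terms, mirroring the Rocq treatment.

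The main step is the quantifier adjunctions. For $\Sigma_\pi\dashv P(\pi)$, suppose $p:\exists_A\Phi\to\Psi$.
\begin{itemize}
\item The forward transpose is $\lambda u.\,p\,\mathsf{exI}(\pi_2,u)$ over $I\times A$. It is typed exactly as in Example~\ref{ex:existential-prenex}, using the conversion $\Phi[\pi_A^+][\pi_2]\simeq\Phi$ obtained from the cartesian identity (T) of Proposition~\ref{prop:diagonal-calculation}. The weakening of $p$ is supplied by substitution.
\item Conversely, given $q:\Phi\to\Psi[\pi_1]$, take $\lambda z.\mathsf{exE}(z,u.\,q[\ldots]u)$.
\end{itemize}
For $P(\pi)\dashv\Pi_\pi$, the two directions are $\mathsf{allI}$ and $\mathsf{allE}(\cdot,\pi_2)$, with the same diagonal conversion. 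With thinness, the bijections need only be inter-derivability. The Beck--Chevalley isomorphisms hold strictly, because $(\exists_A\Phi)[f]=\exists_A(\Phi[f_A^+])$ and likewise for $\forall$.

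Finally, the stated identities follow. We have $\Sigma_\pi\Phi=\exists_A\Phi$ and $\Pi_\pi\Phi=\forall_A\Phi$ by definition, and $\sem{\Phi}=\Phi$ and $\sem{\Gamma}=\Gamma^\times$ follow by induction on $I$, $\Phi$ and $\Gamma$, as in Remark~\ref{rem:term-model-interpretation}. The atomic case uses that a well-formed atom is sent to itself.
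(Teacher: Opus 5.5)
Your proposal is correct and is essentially the paper's argument written out in detail: the iFOL rules give the cartesian syntactic base, the thin bicartesian closed fibres, reindexing by syntactic substitution, and both quantifier adjunctions via $\mathsf{exI}/\mathsf{exE}$ and $\mathsf{allI}/\mathsf{allE}$, and the interpretation identities follow by induction on types, formulae and contexts. Two small points. First, (At-cong) for $I\vdash t,u:A$ with $A\neq\operatorname{ar}(R)$ needs the same codomain-uniqueness case split you use for (At-subst); both atoms are then sent to $\top$. Second, you get Beck--Chevalley strictly from $(\exists_A\Phi)[f]=\exists_A(\Phi[f_A^+])$, whereas the paper cites substitution conversion, which also covers a formal setting where the semantic and syntactic lifts agree only up to $\equiv$.
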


\begin{proof}
The iFOL rules give the cartesian base, bicartesian closed fibres,
syntactic substitution, and both quantifier adjunctions. Substitution
conversion gives Beck--Chevalley. The interpretation identities follow
by induction on types, well-formed formulae, and contexts.
\end{proof}

\begin{theorem}[Soundness and completeness of $\mathrm{iFOL}$, \coqterm{ifol_soundness}, \coqterm{ifol_completeness_inhabitational_gen}]
\label{thm:ifol-soundness-completeness}
Fix a typed signature $\Sigma$, a well-formed context
$I\vdash\Gamma\;\mathsf{ctx}$ and a well-formed formula
$I\vdash\Phi\;\mathsf{form}$ in $\mathcal L$. Then
\[
 I\mid\Gamma\vdash_{\mathrm{iFOL}}\Phi
 \quad\Longleftrightarrow\quad
 \bigl(\forall\mathcal M,\quad
 I\mid\Gamma\models_{\mathrm{iFOL}}^{\mathcal M}\Phi\bigr),
\]
where $\mathcal M$ ranges over all iFOL-hyperdoctrines interpreting $\mathcal L$ over $\Sigma$.
(Cf.~\cite[pp.~234--235, 247--248]{Jacobs1999}).
\end{theorem}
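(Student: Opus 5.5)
The argument follows the pattern of Theorem~\ref{thm:soundness}, with the $\varepsilon$-cases removed and $\mathcal T_{\mathrm{iFOL}}$ in place of $\mathcal T_{\exists\varepsilon}$.

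\emph{Soundness.} Fix an iFOL-hyperdoctrine $\mathcal M$ and interpret every derivation $I\mid\Gamma\vdash_{\mathrm{iFOL}}p:\Phi$ as a fibre arrow $\sem{\Gamma}\to\sem{\Phi}$ in $P_{\mathcal M}(\sem{I})$, by induction on the derivation.
\begin{itemize}
\item Variables are composites of product projections; recall that the newest assumption is the second factor.
\item $\top$, $\bot$, $\land$, $\lor$ and $\to$ use the bicartesian closed structure of the fibre.
\item The $\lor$-elimination case needs distributivity of $\times$ over $+$, which holds in any cartesian closed fibre.
\item For $\forall\mathrm I$ and $\exists\mathrm E$, the premise lives over $I\times A$ with context $\Gamma[\pi_1]$. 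I first use Proposition~\ref{prop:semantic-substitution} and preservation of products by $P(\pi)$ to get $\sem{\Gamma[\pi_1]}\cong P(\pi)\sem{\Gamma}$. Then I transpose along $P(\pi)\dashv\Pi_\pi$, respectively along $\Sigma_\pi\dashv P(\pi)$. In the $\exists\mathrm E$ case the context must be carried through the left adjoint by Frobenius reciprocity, which follows from thinness plus exponentials: $\Sigma_\pi(X\times P(\pi)G)\le Y$ iff $X\le P(\pi)(G\Rightarrow Y)$.
\item For $\forall\mathrm E$ and $\exists\mathrm I$, I reindex the counit, respectively the unit, along $\langle\mathrm{id},\sem a\rangle$. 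This uses $\pi\circ\langle\mathrm{id},\sem a\rangle\sim\mathrm{id}$, (PB-cong), and (Subst) to identify $\sem{\Phi[a]}$.
\item The conversion rules use the iFOL analogue of Proposition~\ref{prop:semantic-formula-conversion}, namely (At-cong), functoriality and the substitution isomorphisms. There are no $\varepsilon$-clauses, so the proof is that of Proposition~\ref{prop:semantic-formula-conversion} with the $\varepsilon$-cases omitted.
\end{itemize}
By thinness, only inhabitation matters, so no coherence between these isomorphisms needs to be checked.

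\emph{Completeness.} Assume $I\mid\Gamma\models^{\mathcal M}_{\mathrm{iFOL}}\Phi$ for all $\mathcal M$. Instantiate $\mathcal M:=\mathcal T_{\mathrm{iFOL}}$, which is an iFOL-hyperdoctrine by Proposition~\ref{prop:ifol-term-model}. By the interpretation identities of that proposition, $\operatorname{Hom}_{P_{\mathrm{iFOL}}(I)}(\Gamma^\times,\Phi)$ is inhabited. The iFOL calculus has the assumption, weakening, $\top$-, $\land$-introduction and $\to$-elimination rules, so Proposition~\ref{prop:term-model-extraction} applies. It yields $I\mid\Gamma\vdash_{\mathrm{iFOL}}\Phi$.

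The main obstacle is the soundness case for $\exists\mathrm E$: the weakened context and the substituted conclusion $\Psi[\pi_1]$ must be aligned with $P(\pi)$ before transposing, and Frobenius must be established. I would derive Frobenius from thinness and exponential preservation by $P(\pi)$ before starting the induction. Weakening, which Proposition~\ref{prop:term-model-extraction} relies on, is assumed to be part of the calculus or proved admissible by a routine induction.
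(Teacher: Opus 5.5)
Your proposal is correct and follows the paper's proof. Soundness goes by induction on iFOL derivations, with the quantifier rules interpreted through $\Sigma_\pi\dashv P(\pi)\dashv\Pi_\pi$; completeness evaluates validity in $\mathcal T_{\mathrm{iFOL}}$ and then uses its interpretation identities and derivation extraction. Your case analysis (distributivity for $\lor$-elimination, Frobenius via exponential preservation for $\exists\mathrm E$, reindexed unit and counit for $\exists\mathrm I$ and $\forall\mathrm E$) fills in details the paper leaves implicit. One small correction: the semantic substitution lemma you cite is stated for $\mathcal L_{\exists\varepsilon}$, so you need its iFOL analogue, proved by the same induction on $\Phi$.
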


\begin{proof}
Soundness is by induction on iFOL derivations; the quantifier rules are
interpreted by $\Sigma_\pi\dashv P(\pi)\dashv\Pi_\pi$.
For completeness, apply validity to the iFOL term model of
Proposition~\ref{prop:ifol-term-model}. Its interpretation identities
make $\operatorname{Hom}_{P_{\mathrm{iFOL}}(I)}(\Gamma^\times,\Phi)$
inhabited. The derivation-extraction argument yields
$I\mid\Gamma\vdash_{\mathrm{iFOL}}\Phi$.
\end{proof}

\subsection{Initiality for interpretation-preserving maps}

\begin{definition}[The model category, \coqterm{EpsilonFullModelHom}]
\label{def:epsilon-model-morphism}
\label{def:epsilon-model-category}
Fix a typed function--predicate signature $\Sigma$ and its language
$\mathcal L_{\exists\varepsilon}$. Let $\mathcal M,\mathcal N$ be thin
$\varepsilon$-hyperdoctrines interpreting this language.
A morphism $F:\mathcal M\to\mathcal N$ consists of proper maps
\[
\begin{aligned}
 F^{\mathrm b}_{I,A}&:\mathcal B_{\mathcal M}[I,A]
   \longrightarrow\mathcal B_{\mathcal N}[I,A],\\
 F^{\mathrm f}_{I,\Phi,\Psi}&:P_{\mathcal M}^{I}[\Phi,\Psi]
   \longrightarrow P_{\mathcal N}^{I}[\Phi,\Psi],\\
 F^{\mathrm s}_{I,\Gamma,\Phi}&:P_{\mathcal M}^{I}[\Gamma,\Phi]
   \longrightarrow P_{\mathcal N}^{I}[\Gamma,\Phi],
\end{aligned}
\]
where $I,A$ are base types, $\Phi,\Psi$ are well formed over $I$, $\Gamma$
is a well-formed context over $I$, and
\[
\begin{aligned}
 \mathcal B_{\mathcal M}[I,A]
 &:=\operatorname{Hom}_{\mathcal B_{\mathcal M}}
   (\sem{I}_{\mathcal M},\sem{A}_{\mathcal M}),\\
 P_{\mathcal M}^{I}[X,Y]
 &:=\operatorname{Hom}_{P_{\mathcal M}(\sem{I}_{\mathcal M})}
   (\sem{X}_{\mathcal M},\sem{Y}_{\mathcal M}).
\end{aligned}
\]
They preserve fibre identities and composition and satisfy
\[
 F^{\mathrm b}_{I,A}(\sem{t}_{\mathcal M})
 \sim\sem{t}_{\mathcal N},
 \qquad
 F^{\mathrm s}_{I,\Gamma,\Phi}(\sem{p}_{\mathcal M})
 \sim\sem{p}_{\mathcal N}
\]
for every $I\vdash t:A$ and
$I\mid\Gamma\vdash_{\exists\varepsilon}p:\Phi$.
Write
$\operatorname{Mod}_{\exists\varepsilon}
(\mathcal M,\mathcal N)$ for the resulting
setoid, with pointwise equivalence of the three components.
Let $\mathsf{Mod}_{\exists\varepsilon}(\Sigma)$ have these models as
objects and these morphisms as arrows, with componentwise identities
and composition.
\end{definition}

\begin{theorem}[Initiality of the syntactic term model, \coqterm{EpsilonFull_ModelCategory.TermModel_weak_initial}, \coqterm{EpsilonFull_ModelCategory.TermModel_initial_unique}]
\label{thm:term-model-initiality}
Fix a typed signature $\Sigma$. Thin $\varepsilon$-hyperdoctrines interpreting $\mathcal L_{\exists\varepsilon}$, with proper base, fibre, and sequent maps indexed by syntax that preserve term and derivation interpretations and fibre identity and composition, form a possibly
large setoid category $\mathsf{Mod}_{\exists\varepsilon}(\Sigma)$.
The term model $\mathcal T_{\exists\varepsilon}$ is initial: for every
object $\mathcal M$, there is a morphism
$\eta_{\mathcal M}:\mathcal T_{\exists\varepsilon}\to\mathcal M$,
unique up to the hom-setoid equivalence.
\end{theorem}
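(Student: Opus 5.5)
We first check that $\mathsf{Mod}_{\exists\varepsilon}(\Sigma)$ is a setoid category. Identities are the identity maps on each hom-setoid, and composition is componentwise composition of the three families $F^{\mathrm b},F^{\mathrm f},F^{\mathrm s}$. Properness and preservation of fibre identities and composition are closed under composition. Preservation of interpretations composes by transitivity of $\sim$: from $F(\sem{t}_{\mathcal M})\sim\sem{t}_{\mathcal N}$ and $G(\sem{t}_{\mathcal N})\sim\sem{t}_{\mathcal O}$, properness of $G$ gives $G(F(\sem{t}_{\mathcal M}))\sim\sem{t}_{\mathcal O}$, and likewise for derivations. The category laws hold pointwise, and composition respects the pointwise equivalence because the components are proper. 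The category is possibly large because the objects range over all setoid categories.

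For existence we define $\eta_{\mathcal M}$ from soundness and from the identity interpretation of the term model.
\begin{itemize}
\item \emph{Base component.} An arrow of $\mathcal B_{\mathrm{syn}}[I,A]$ is a typed term $I\vdash t:A$. Set $\eta^{\mathrm b}(t):=\sem{t}_{\mathcal M}$, using Proposition~\ref{prop:syntactic-interpretation-identity}. This map is proper by the forward direction of (eq) in Theorem~\ref{thm:base-soundness-completeness}, and it preserves interpretations because $\sem{t}_{\mathcal T}=t$.
\item \emph{Fibre component.} By Remark~\ref{rem:term-model-interpretation}, an arrow of $P_{\mathcal T}^I[\Phi,\Psi]$ is a closed derivation $I\mid\varnothing\vdash p:\Phi\to\Psi$. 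Theorem~\ref{thm:soundness}(1) gives $\sem{p}_{\mathcal M}:\top\to\sem{\Psi}^{\sem{\Phi}}$. Uncurrying, i.e.\ composing $\langle !,\mathrm{id}\rangle$ with evaluation, yields an arrow $\sem{\Phi}\to\sem{\Psi}$.
\item \emph{Sequent component.} An arrow of $P_{\mathcal T}^I[\Gamma,\Phi]$ is a closed proof of $\Gamma^\times\to\Phi$. We send it to the interpretation of the extracted derivation $q\,r_\Gamma$ of Proposition~\ref{prop:term-model-extraction}. Alternatively, we uncurry as in the fibre case and use $\sem{\Gamma^\times}\cong\sem{\Gamma}$.
\end{itemize}
Since the fibres of $\mathcal M$ are thin, every fibre-level requirement holds automatically: properness, preservation of identities and composition, and $\eta^{\mathrm s}(\sem{p}_{\mathcal T})\sim\sem{p}_{\mathcal M}$ all relate parallel arrows of a thin fibre. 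The only genuine existence issue is that the domains and codomains match. This is exactly the identity interpretation $\sem{\Phi}_{\mathcal T}=\Phi$, $\sem{\Gamma}_{\mathcal T}=\Gamma^\times$, combined with soundness to land in the correct hom-set of $\mathcal M$.

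For uniqueness, let $F,G:\mathcal T\to\mathcal M$ be two morphisms. For the base component, every $t\in\mathcal B_{\mathrm{syn}}[I,A]$ equals $\sem{t}_{\mathcal T}$, so $F^{\mathrm b}(t)\sim\sem{t}_{\mathcal M}\sim G^{\mathrm b}(t)$. The fibre and sequent components agree because any two parallel arrows in the thin fibres of $\mathcal M$ are equivalent. Hence $F$ and $G$ are equivalent in $\operatorname{Mod}_{\exists\varepsilon}(\mathcal T,\mathcal M)$.

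The main obstacle I expect is bookkeeping rather than mathematics. The interpretation $\sem{t}$ depends on typing derivations, so independence of the derivation (a base-model axiom) is needed in order to define $\eta^{\mathrm b}$ on terms rather than on derivations. The fibre component also needs a concrete uncurrying of soundness that makes the typing land exactly in $P_{\mathcal M}^I[\Phi,\Psi]$. I would isolate the latter as a lemma: every closed derivation of $\Phi\to\Psi$ yields an arrow $\sem{\Phi}\to\sem{\Psi}$. Once these typing issues are settled, thinness makes all the remaining laws automatic.
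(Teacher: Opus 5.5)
Your proposal is correct and follows the paper's proof essentially step for step. The shared steps are: componentwise identities and composition, with properness, give the setoid-category structure; the base component sends a typed term to its interpretation and is proper by base equational soundness; the fibre component comes from soundness plus evaluation; thinness discharges every fibre and sequent equation; and uniqueness is forced by base-term preservation and thinness. The only variation is your first option for the sequent component, which interprets the extracted derivation $q\,r_\Gamma$ directly by soundness. That lands in $\operatorname{Hom}(\sem{\Gamma}_{\mathcal M},\sem{\Phi}_{\mathcal M})$ just as well as the paper's precomposition with $\sem{\Gamma}_{\mathcal M}\to\sem{\Gamma^\times}_{\mathcal M}$, which is your second option.
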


\begin{proof}
Pointwise equivalence gives the hom-setoids.
Properness makes the interpretation-preservation laws stable under
composition. The fibre functor laws, associativity, and the unit laws
hold componentwise, as does properness of composition.

A base morphism of $\mathcal T_{\exists\varepsilon}$ is a typed base term
and is sent to its interpretation in $\mathcal M$. A fibre morphism is a
closed implication derivation, which
Theorem~\ref{thm:soundness} and evaluation interpret in $\mathcal M$. For a
sequent morphism, precompose this interpretation with
$\sem{\Gamma}_{\mathcal M}\to\sem{\Gamma^\times}_{\mathcal M}$. These
assignments satisfy base compatibility by the equational soundness part of
Theorem~\ref{thm:base-soundness-completeness}, and all fibre and sequent
equations by thinness. Base-term preservation forces the base component of
any such morphism, while thinness forces its fibre and sequent components.
\end{proof}

\begin{remark}[Scope of initiality, \coqterm{EpsilonFull_ModelCategory.TermModel_initial_unique}]
The model morphisms are indexed by syntactic types, formulae, and contexts.
Their base component preserves interpretations of typed terms, but need
not preserve composition of arbitrary semantic base morphisms. Initiality
is asserted for these maps, not for arbitrary cartesian functors between
the underlying base categories. Its uniqueness does not identify raw
proof terms.
\end{remark}

\begin{corollary}[Compatibility of interpretations, \coqterm{EpsilonFull_ModelCategory.TermModel_interpretation_natural}, \coqterm{EpsilonFull_ModelCategory.TermModel_interpretation_identity}]
\label{cor:interpretation-naturality}
Fix a typed signature $\Sigma$. For models $\mathcal M,\mathcal N$ of $\mathrm{iFOL}_{\exists\varepsilon}$ and a morphism $F:\mathcal M\to\mathcal N$ in $\mathsf{Mod}_{\exists\varepsilon}(\Sigma)$, let $\eta$ denote the interpretation morphisms from the syntactic term model. Then
\[
 F\circ\eta_{\mathcal M}\sim\eta_{\mathcal N},
 \qquad
 \eta_{\mathcal T_{\exists\varepsilon}}
 \sim\mathrm{id}_{\mathcal T_{\exists\varepsilon}}.
\]
(Cf.~\cite{LambekScott1986}).
\end{corollary}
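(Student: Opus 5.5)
Both equations follow from the uniqueness half of Theorem~\ref{thm:term-model-initiality}. The plan is to check that each side of each equation is a morphism in $\mathsf{Mod}_{\exists\varepsilon}(\Sigma)$ with domain $\mathcal T_{\exists\varepsilon}$. Uniqueness up to the hom-setoid equivalence then identifies them.

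For the first equation, I will use that composition in $\mathsf{Mod}_{\exists\varepsilon}(\Sigma)$ is well defined, as established in the proof of Theorem~\ref{thm:term-model-initiality}. Hence $F\circ\eta_{\mathcal M}:\mathcal T_{\exists\varepsilon}\to\mathcal N$ is a morphism. I will verify this concretely:
\begin{enumerate}
\item The base component sends a typed term $t$ to $F^{\mathrm b}(\sem{t}_{\mathcal M})$. By preservation this is $\sim\sem{t}_{\mathcal N}$.
\item The sequent component sends a derivation interpretation $\sem{p}_{\mathcal T}$ first to something $\sim\sem{p}_{\mathcal M}$, and then, by properness of $F^{\mathrm s}$, to something $\sim\sem{p}_{\mathcal N}$.
\item Fibre identities and composition are preserved because both factors preserve them.
\end{enumerate}
Since $\eta_{\mathcal N}$ is also a morphism $\mathcal T_{\exists\varepsilon}\to\mathcal N$, uniqueness gives $F\circ\eta_{\mathcal M}\sim\eta_{\mathcal N}$.

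For the second equation, $\mathrm{id}_{\mathcal T_{\exists\varepsilon}}$ is the componentwise identity. It trivially preserves term and derivation interpretations and fibre structure. Both it and $\eta_{\mathcal T_{\exists\varepsilon}}$ are therefore endomorphisms of the initial object, and uniqueness gives $\eta_{\mathcal T_{\exists\varepsilon}}\sim\mathrm{id}$.

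The uniqueness argument hides one point: equivalence is pointwise on all three components. On base components, any morphism out of $\mathcal T_{\exists\varepsilon}$ sends a typed term $t$ to something $\sim\sem{t}$. Every base arrow of $\mathcal B_{\mathrm{syn}}$ is such a term, and Proposition~\ref{prop:syntactic-interpretation-identity} gives $\sem{t}_{\mathcal T_{\exists\varepsilon}}=t$, so the two base components agree. The fibre and sequent components of $\mathcal N$ agree by thinness.

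The main obstacle is the properness bookkeeping in the first equation. Preservation for $F\circ\eta_{\mathcal M}$ requires applying $F^{\mathrm b}$ and $F^{\mathrm s}$ to arrows that are only equivalent to $\sem{t}_{\mathcal M}$ or $\sem{p}_{\mathcal M}$, not equal to them. This step uses the properness of $F$ explicitly. I will discharge it directly rather than rely on the informal remark in the proof of Theorem~\ref{thm:term-model-initiality}.
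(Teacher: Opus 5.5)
Your proposal is correct and follows the paper's proof. In each equation both sides are morphisms in $\mathsf{Mod}_{\exists\varepsilon}(\Sigma)$ out of the initial object $\mathcal T_{\exists\varepsilon}$, so uniqueness in Theorem~\ref{thm:term-model-initiality} identifies them; your extra checks (properness of $F^{\mathrm b}$ and $F^{\mathrm s}$ for the composite, and uniqueness re-derived from base-term preservation plus thinness) only unpack what the paper already records in that theorem's proof.
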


\begin{proof}
In each equation, both sides are morphisms from the initial object to
the same model. Apply Theorem~\ref{thm:term-model-initiality}.
\end{proof}

\section{Logical strength and separation}
\label{sec:logical-strength-separation}

\subsection{Dual Drinker and excluded middle}

\begin{definition}[Dual Drinker, \coqterm{DualDrinker_Syntax.Dual_Drinker}]
In $\mathcal L_{\exists\varepsilon}$, or in the epsilon-free language $\mathcal L$ when $\Phi$ belongs to it, let $I,A\in\mathsf{BType}$ and
$I\times A\vdash\Phi\;\mathsf{form}$. Write
$(\exists_A\Phi)^\uparrow:=(\exists_A\Phi)[\pi_1]$. The corresponding
Dual Drinker formula is
\[
 \exists_A\bigl((\exists_A\Phi)^\uparrow\to\Phi\bigr).
\]
For $\Phi=P(\pi_2)$ this is
$\exists x:A\,((\exists y:A\,P(y))\to P(x))$.
(Cf.~\cite[p.~5]{WarrenDienerMcKubreJordens2018}).
\end{definition}

\begin{proposition}[Dual Drinker is derivable, \coqterm{epsilonfull_dual_drinker_provable}]
\label{prop:dual-drinker-derivable}
In $\mathrm{iFOL}_{\exists\varepsilon}$ over a typed signature $\Sigma$, let $I,A$ be base types and $I\times A\vdash\Phi\;\mathsf{form}$. Write $(\exists_A\Phi)^\uparrow=(\exists_A\Phi)[\pi_1]$. Then there is a proof term $d_\Phi$ with
\[
 I\mid\varnothing
 \vdash_{\exists\varepsilon}d_\Phi:
 \exists_A\bigl((\exists_A\Phi)^\uparrow\to\Phi\bigr).
\]
(Cf.~\cite[p.~5]{WarrenDienerMcKubreJordens2018}).
\end{proposition}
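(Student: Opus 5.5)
We witness the outer existential by the selected term $\varepsilon_\Phi$ itself, and supply the required implication through the $\mathsf{epsI}$/$\mathsf{exE}$ combination already used for $e_\Phi$ in Proposition~\ref{prop:epsilon-existence}. Put $\Theta:=(\exists_A\Phi)^\uparrow\to\Phi$, well formed over $I\times A$. By $\mathsf{exI}$ it suffices to give $I\vdash\varepsilon_\Phi:A$, which is the typing rule for labelled constants, together with a proof
\[
 I\mid\varnothing\vdash_{\exists\varepsilon} q:\Theta[\varepsilon_\Phi].
\]
Unfolding substitution, $\Theta[\varepsilon_\Phi]$ is $(\exists_A\Phi)^\uparrow[\varepsilon_\Phi]\to\Phi[\varepsilon_\Phi]$.

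The first step is the conversion
\[
 (\exists_A\Phi)[\pi_1][\langle\mathrm{id}_I,\varepsilon_\Phi\rangle]\simeq_I\exists_A\Phi .
\]
We obtain it from the composition clause of Definition~\ref{def:formula-conversion}, which turns the left side into $(\exists_A\Phi)[\pi_1\circ\langle\mathrm{id}_I,\varepsilon_\Phi\rangle]$. The base equation $\pi_1\circ\langle\mathrm{id}_I,\varepsilon_\Phi\rangle\equiv\mathrm{id}_I$, with the substitution-congruence clause, turns this into $(\exists_A\Phi)[\mathrm{id}_I]$. The identity clause then gives $\exists_A\Phi$. Applying congruence for $\to$, we get
\[
 \Theta[\varepsilon_\Phi]\simeq_I(\exists_A\Phi\to\Phi[\varepsilon_\Phi]).
\]
We then take $q:=e_\Phi$ from Proposition~\ref{prop:epsilon-existence} and retype it along this conversion by the conversion rule. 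The resulting term is
\[
 d_\Phi:=\mathsf{exI}(\varepsilon_\Phi,e_\Phi)
 =\mathsf{exI}\bigl(\varepsilon_\Phi,\lambda z.\,\mathsf{exE}(z,u.\mathsf{epsI}(\pi_2^J,u))\bigr).
\]

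The main obstacle is bookkeeping. The raw formula $\Theta[\varepsilon_\Phi]$ is $(\exists_A\Phi)[\pi_1][\langle\mathrm{id},\varepsilon_\Phi\rangle]\to\Phi[\langle\mathrm{id},\varepsilon_\Phi\rangle]$. This holds because syntactic substitution acts homomorphically on $\to$, so the second component is literally $\Phi[\varepsilon_\Phi]$ and needs only reflexivity. All conversion clauses demand well-formedness side conditions. These are that $I\vdash\exists_A\Phi\;\mathsf{form}$, $I\times A\vdash(\exists_A\Phi)[\pi_1]\;\mathsf{form}$ (well-formedness is preserved by substitution), and $I\vdash\Phi[\varepsilon_\Phi]\;\mathsf{form}$. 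We check them before invoking congruence.

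An alternative route avoids $e_\Phi$: build the implication under $\lambda z$ by $\mathsf{exE}$ on $z$ and then $\mathsf{epsI}$. That route would repeat the $(*)$ conversions, so reusing Proposition~\ref{prop:epsilon-existence} is preferred. Notably, no excluded middle is needed, consistent with the later $H_3$ countermodel.
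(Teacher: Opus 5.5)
Your proposal is correct and follows the paper's proof essentially step for step. You witness the outer existential by $\varepsilon_\Phi$, and you convert $\Theta[\varepsilon_\Phi]$ to $(\exists_A\Phi)\to\Phi[\varepsilon_\Phi]$ using substitution composition, $\pi_1\circ\langle\mathrm{id}_I,\varepsilon_\Phi\rangle\equiv\mathrm{id}_I$, and the identity clause. You then retype $e_\Phi$ from Proposition~\ref{prop:epsilon-existence}, giving $d_\Phi=\mathsf{exI}(\varepsilon_\Phi,e_\Phi)$. The one point to state explicitly is that the conversion rule needs this conversion in the reverse direction, which symmetry of $\simeq_I$ provides; the paper writes it as $\textup{(D)}^{-1}$.
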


\begin{proof}
Let $\Omega_\Phi:=(\exists_A\Phi)^\uparrow\to\Phi$.
Proposition~\ref{prop:epsilon-existence} gives
\[
 I\mid\varnothing\vdash_{\exists\varepsilon}
 e_\Phi:(\exists_A\Phi)\to\Phi[\varepsilon_\Phi].
\]
For the first projection $I\times A\vdash\pi_1:I$,
$\pi_1\circ\langle\mathrm{id}_I,\varepsilon_\Phi\rangle
\equiv_{I,I}\mathrm{id}_I$. Hence substitution composition and the
identity conversion give
\[
 \Omega_\Phi[\varepsilon_\Phi]
 =\bigl((\exists_A\Phi)[\pi_1][\varepsilon_\Phi]
   \to\Phi[\varepsilon_\Phi]\bigr)
 \simeq_I\bigl((\exists_A\Phi)\to\Phi[\varepsilon_\Phi]\bigr).
\tag{D}
\]
With $d_\Phi:=\mathsf{exI}(\varepsilon_\Phi,e_\Phi)$, the derivation is
\[
\prftree[r]{$\scriptstyle\exists\mathrm I$}
 {I\vdash\varepsilon_\Phi:A}
 {
  \prftree[r]{$\scriptstyle\mathrm{conv}$}
   {I\mid\varnothing\vdash
     e_\Phi:(\exists_A\Phi)\to\Phi[\varepsilon_\Phi]}
   {\bigl((\exists_A\Phi)\to\Phi[\varepsilon_\Phi]\bigr)
     \simeq_I\Omega_\Phi[\varepsilon_\Phi]
     \quad\textup{(D)}^{-1}}
   {I\mid\varnothing\vdash e_\Phi:\Omega_\Phi[\varepsilon_\Phi]}
 }
 {I\mid\varnothing\vdash_{\exists\varepsilon}
   d_\Phi:\exists_A\Omega_\Phi}.
\]
\end{proof}

\begin{remark}[The intensional witness, \coqterm{epsilonfull_dual_drinker_provable}]
The witness in Proposition~\ref{prop:dual-drinker-derivable} is the term
$\varepsilon_\Phi$ indexed by the raw presentation of $\Phi$. The derivation
uses $\mathrm{(\varepsilon BC)}$ but no extensionality principle for
$\varepsilon$.
(Cf.~\cite{Molnar2013}).
\end{remark}

\begin{definition}[The countermodel algebras, \coqterm{H3FiniteHeytingAlgebra}, \coqterm{H5FiniteHeytingAlgebra}]
Let $H_3=\{\bot,m,\top\}$ and
$H_5=\{\bot,h_1,h_2,h_{12},\top\}$ be the finite Heyting algebras
whose orders are displayed below. The order runs upwards.
\begin{center}
\begin{tikzpicture}[
  x=1cm,y=0.72cm,
  every node/.style={inner sep=1.5pt}
]
 \node (h3b) at (0,0) {$\bot$};
 \node (h3m) at (0,1.5) {$m$};
 \node (h3t) at (0,3) {$\top$};
 \draw (h3b)--(h3m)--(h3t);
 \node at (0,-0.75) {$H_3$};

 \node (h5b) at (5,0) {$\bot$};
 \node (h51) at (4.15,1) {$h_1$};
 \node (h52) at (5.85,1) {$h_2$};
 \node (h512) at (5,2) {$h_{12}$};
 \node (h5t) at (5,3) {$\top$};
 \draw (h5b)--(h51)--(h512)--(h5t);
 \draw (h5b)--(h52)--(h512);
 \node at (5,-0.75) {$H_5$};
\end{tikzpicture}
\end{center}
Writing $a\Rightarrow b$ for Heyting implication, the values used below are
\[
 (m\Rightarrow\bot)=\bot,
 \qquad
 h_1\lor h_2=h_{12},
 \qquad
 (h_{12}\Rightarrow h_i)=h_i\quad(i=1,2).
\]
(Cf.~\cite[p.~239]{Jacobs1999}).
\end{definition}

\begin{proposition}[The $H_3$-valued epsilon model, \coqterm{H3CanonicalFullModel}]
\label{prop:h3-epsilon-model}
For $\mathrm{iFOL}_{\exists\varepsilon}$, let the signature have no primitive
function symbols and two predicate symbols $Q,R$, with
$\operatorname{ar}(Q)=\mathbf 1$ and $\operatorname{ar}(R)=S$.
Take the cartesian base $\mathcal B_2$ whose objects are base types, with
\[
 |\mathbf 1|=\{*\},\qquad |S|=\mathbf 2=\{\mathsf t,\mathsf f\},
 \qquad |I\times A|=|I|\times|A|,
\]
and whose arrows $I\to J$ are all functions $|I|\to|J|$, with pointwise
equality. The primitive interpretations
\[
 Q_3(*)=m,\qquad R_3(\mathsf t)=m,\qquad R_3(\mathsf f)=\bot
\]
extend to a thin $\varepsilon$-hyperdoctrine $\mathcal M_3$ for the full
language $\mathcal L_{\exists\varepsilon}$, with
\[
 P_3(I)=H_3^{|I|},\qquad P_3(f)(\Psi)=\Psi\circ f
 \quad(f:I\to J,\ \Psi\in H_3^{|J|}).
\]
The fibres are ordered pointwise; connectives are the pointwise Heyting
operations, and quantifiers are joins and meets over the quantified
coordinate.
(Cf.~\cite[p.~159]{MaiettiPasqualiRosolini2017}).
\end{proposition}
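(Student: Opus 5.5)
We build $\mathcal M_3$ in three layers: the base model, the hyperdoctrine, and the $\varepsilon$-structure. The $\varepsilon$-structure is defined by well-founded recursion on labels (Corollary~\ref{cor:label-recursion}).

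\emph{Base and hyperdoctrine.} $\mathcal B_2$ is cartesian: $\mathbf 1$ is terminal, products are cartesian products of finite sets, and equality is pointwise. We take $S_{\mathcal B}=\mathbf 2\neq\{*\}$. The fibres $P_3(I)=H_3^{|I|}$ with precomposition form the $H$-valued predicate hyperdoctrine of Example~\ref{ex:h-valued-predicate-hyperdoctrine}, restricted to these finite carriers. This gives thin bicartesian closed fibres, strict functoriality, preservation of the pointwise connectives, the adjoints $\Sigma_\pi$ and $\Pi_\pi$ as finite joins and meets, Beck--Chevalley pointwise, and (PB-cong) because pointwise-equal functions give equal precompositions. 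The atomic data are $\mathcal A_I(R,t):=R_3\circ\sem t$ and $\mathcal A_I(Q,t):=Q_3\circ\sem t$ for well-typed $t$, and $\top$ otherwise. (At-subst) then holds as an equality, because $\sem{t\circ f}=\sem t\circ\sem f$. (At-cong) follows from equational base soundness (Theorem~\ref{thm:base-soundness-completeness}, (eq)).

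\emph{Interpreting labelled constants.} The base interpretation needs $\iota^A_{I,s}:|I|\to|A|$ for every label $s$. It also needs $\iota^A_{I,\langle A,\Phi\rangle}$ to be a witness whenever $I\times A\vdash\Phi\;\mathsf{form}$. We define $\iota$ by recursion on $\operatorname{rk}(s)$. Suppose $s=\langle A,\Phi\rangle$ and all labels occurring directly in $\Phi$ are already interpreted, since they have smaller rank (Proposition~\ref{prop:label-rank}). Then $\sem\Phi_{I\times A}:|I|\times|A|\to H_3$ is determined, because the formula interpretation only consults constants occurring directly in $\Phi$. For each $x\in|I|$ the set $|A|$ is finite and nonempty, and $H_3$ is a chain. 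So we can choose $a_x\in|A|$ maximizing $\sem\Phi(x,a)$ with respect to a fixed enumeration of $|A|$, and put $\iota^A_{I,s}(x):=a_x$. For labels whose codomain does not match, or whose $\Phi$ is not well formed over $I\times A$, we use the first element of $|A|$; every $|A|$ is nonempty. This assignment is a function of the raw term and its type data, so the interpretation is independent of the typing derivation.

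\emph{The $\varepsilon$-structure.} By Proposition~\ref{prop:semantic-substitution} we have $\sem{\Phi[\varepsilon_\Phi]}(x)=\sem\Phi(x,a_x)$. Because $H_3$ is linearly ordered and $|A|$ is finite, this value equals $\bigvee_{a}\sem\Phi(x,a)=(\Sigma_\pi\sem\Phi)(x)$. In a thin poset fibre this gives the isomorphism $(\varepsilon\Sigma)$, which is all that Definition~\ref{def:intensional-epsilon-hyperdoctrine} requires.

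\emph{Main obstacle.} The hardest step is making the recursion coherent. The value $\sem\Phi_{I\times A}$ used to choose $a_x$ must be the same as the value of $\sem\Phi$ in the final model. We would prove by induction on rank that the interpretation of a term or formula of rank $\le n$ depends only on $\iota$ restricted to labels of rank $\le n$. Then the stage-$n$ evaluation agrees with the global one, which justifies the unfolding equation. A secondary check is that interpretations of well-typed terms containing $c_s^A$ with a mismatched context $I$ are still well defined. This holds because we define $\iota$ for every pair $(I,A)$.
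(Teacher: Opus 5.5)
Your proposal is correct and follows the paper's route. Both build the hyperdoctrine from the $H_3$-valued example, interpret labelled constants by rank recursion using a maximizing witness (ties broken by a fixed enumeration, existence from finite nonempty carriers and the chain $H_3$), and obtain $(\varepsilon\Sigma)$ from max $=$ join once the stage-wise evaluation is shown to agree with the final interpretation. The one difference is in how raw syntax is handled. The paper evaluates raw terms on tagged values and sends failed atom decoding to $\top$, so (At-subst) holds for ill-typed terms by construction. Your typed-or-$\top$ atoms instead rely on a true but unstated fact: for a given domain, a base term has at most one codomain, so $J\vdash t:\operatorname{ar}(R)$ iff $I\vdash t\circ f:\operatorname{ar}(R)$ for $I\vdash f:J$. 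Your derivation-independence claim needs this same fact.
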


\begin{proof}
The predicate-hyperdoctrine operations are those of
Example~\ref{ex:h-valued-predicate-hyperdoctrine} with $H=H_3$ and
$S=\mathbf 2$. Each base carrier is finite and nonempty; fix its
recursively defined enumeration.

Interpret labelled constants by recursion on their label rank. For
$s=\langle D,\Theta\rangle$, an output type $B$, and an environment $v$,
interpret $c_s^B$ by a $b\in|B|$ maximizing the raw value
$V_\Theta(v,b)$ of $\Theta$, with ties settled by the fixed enumeration. This maximum exists because
$H_3=\{\bot<m<\top\}$. Evaluating $\Theta$ uses only constants whose
labels have smaller rank, so the recursion is well founded. The same
rule interprets constants whose output type $B$ differs from $D$.

For raw terms, use tagged unit, Boolean, and pair values, with sequential
evaluation of composition. A raw atom evaluates its term and decodes the
result at the predicate's declared arity; failed evaluation or decoding
has value $\top$. Formula evaluation uses the pointwise Heyting
operations and finite joins and meets. This makes atomic substitution
valid also for raw, ill-typed terms. Induction on typing shows that typed
term interpretation agrees with raw evaluation and is independent of
the typing derivation. The cartesian equations then give atomic base
congruence.

For a well-formed $I\times A\vdash\Phi\;\mathsf{form}$, induction on
$\Phi$ identifies raw formula evaluation with its interpretation. The
chosen distinguished constant therefore satisfies
\[
 \sem{\Phi}_{\mathcal M_3}
    \bigl(x,\sem{\varepsilon_\Phi}_{\mathcal M_3}(x)\bigr)
 =\max_{a\in|A|}\sem{\Phi}_{\mathcal M_3}(x,a)
 =\bigvee_{a\in|A|}\sem{\Phi}_{\mathcal M_3}(x,a).
\]
This is the required $(\varepsilon\Sigma)$ isomorphism in the pointwise
ordered fibres.
\end{proof}

\begin{theorem}[Excluded middle is underivable, \coqterm{H3_canonical_LEM_underivable}]
\label{thm:lem-underivable}
Consider $\mathrm{iFOL}_{\exists\varepsilon}$ with no primitive function symbols and predicate symbols $Q,R$ of arities $\mathbf 1,S$, respectively. Write $Q$ for the atom $Q(!_{\mathbf 1})$ over the base context $\mathbf 1$. Then
\[
 1\mid\varnothing
 \nvdash_{\exists\varepsilon}Q\lor(Q\to\bot).
\]
Thus the intensional $\varepsilon$-principles of
$\mathrm{iFOL}_{\exists\varepsilon}$ do not imply the law of excluded middle.
(Cf.~\cite[p.~409]{Pasquali2016}, \cite[p.~463]{Pasquali2019}).
\end{theorem}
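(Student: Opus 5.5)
The plan is to argue by soundness against the countermodel $\mathcal M_3$ of Proposition~\ref{prop:h3-epsilon-model}. Suppose, for contradiction, that $1\mid\varnothing\vdash_{\exists\varepsilon}p:Q\lor(Q\to\bot)$ for some proof term $p$. The context $\varnothing$ and the formula are well formed over $\mathbf 1$: the term $!_{\mathbf 1}$ has type $\operatorname{ar}(Q)=\mathbf 1$, and the propositional formation rules apply. Since $\mathcal M_3$ is a thin $\varepsilon$-hyperdoctrine, part (1) of Theorem~\ref{thm:soundness} gives a fibre arrow
\[
 \sem{p}_{\mathcal M_3}:\sem{\varnothing}_{\mathbf 1}\longrightarrow\sem{Q\lor(Q\to\bot)}_{\mathbf 1}
\]
in $P_3(\mathbf 1)=H_3^{\{*\}}$. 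Because the fibres are ordered pointwise, this says exactly that $\top\le\sem{Q\lor(Q\to\bot)}(*)$ in $H_3$.

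Next I compute the right-hand side. By Definition~\ref{def:formula-context-interpretation}, $\sem{\varnothing}=\top$. The atom gives $\sem{Q(!_{\mathbf 1})}_{\mathbf 1}=\mathcal A_{\mathbf 1}(Q,!_{\mathbf 1})$. In $\mathcal M_3$ this value is obtained by evaluating $!_{\mathbf 1}$ at $*$, which yields $*$, and then applying $Q_3$, so it equals $Q_3(*)=m$. I will confirm this against the raw-evaluation description in the proof of Proposition~\ref{prop:h3-epsilon-model}; the typed term $!_{\mathbf 1}$ decodes successfully at arity $\mathbf 1$, so the fallback value $\top$ does not occur. Coproduct and exponential are interpreted pointwise as $\vee$ and $\Rightarrow$, so the value is
\[
 m\vee(m\Rightarrow\bot)=m\vee\bot=m,
\]
using $(m\Rightarrow\bot)=\bot$ from the definition of $H_3$.

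Since $\top\not\le m$ in the chain $\bot<m<\top$, no such fibre arrow exists, which contradicts soundness. Equivalently, $1\mid\varnothing\not\models^{\mathcal M_3}_{\exists\varepsilon}Q\lor(Q\to\bot)$, so part (2) of Theorem~\ref{thm:soundness} rules out derivability.

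The main obstacle is not the contradiction itself. It is making sure that $\mathcal M_3$ really is a thin $\varepsilon$-hyperdoctrine and that the atom evaluates to $m$ rather than to the raw fallback value. The first point is supplied by Proposition~\ref{prop:h3-epsilon-model}. For the second, I would check that typed interpretation agrees with raw evaluation for the well-typed atom, which the proof of that proposition establishes by induction on typing. The predicate $R$ plays no role here; it appears only because it is part of the signature of $\mathcal M_3$.
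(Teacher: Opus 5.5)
Your proposal is correct and follows the paper's proof: interpret the putative derivation in the $H_3$-valued model $\mathcal M_3$ via soundness. Since $\sem{Q\lor(Q\to\bot)}=m\vee(m\Rightarrow\bot)=m$, the required fibre arrow from $\top$ would force $\top\le m$ in the pointwise order, which fails. Your extra check, that the well-typed atom evaluates to $Q_3(*)=m$ rather than to the raw fallback $\top$, is a sensible detail that the paper leaves implicit.
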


\begin{proof}
Under the identification
$P_3(\mathbf 1)=H_3^{\{*\}}\cong H_3$, one has
\[
 \sem{Q}_{\mathcal M_3}=m,
 \qquad
 \sem{Q\to\bot}_{\mathcal M_3}=m\Rightarrow\bot=\bot,
 \qquad
 \sem{Q\lor(Q\to\bot)}_{\mathcal M_3}=m\lor\bot=m.
\]
The fibre $P_3(\mathbf 1)=H_3^{\{*\}}$ is ordered pointwise, so a morphism
from the constant $\top$ predicate to the constant $m$ predicate would
mean $\top\leq m$, which is false.
Theorem~\ref{thm:soundness} therefore excludes the displayed derivation.
\end{proof}

\begin{proposition}[The finite $H_5$ countermodel, \coqterm{ifol_dual_drinker_refuted}]
\label{prop:h5-dual-drinker-model}
For the constant-free signature with a primitive sort $S$ and a predicate
symbol $P$ with $\operatorname{ar}(P)=S$, there is an iFOL model
$\mathcal M_5$ in which
$\sem{S}_{\mathcal M_5}=\mathbf 2=\{\mathsf t,\mathsf f\}$,
$\sem{P}_{\mathcal M_5}(\mathsf t)=h_1$, and
$\sem{P}_{\mathcal M_5}(\mathsf f)=h_2$, while
\[
 \sem{\exists x:S\,((\exists y:S\,P(y))\to P(x))}_{\mathcal M_5}
 =h_{12}<\top.
\]
(Cf.~\cite[pp.~5--6]{WarrenDienerMcKubreJordens2018}).
\end{proposition}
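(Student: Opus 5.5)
The model $\mathcal M_5$ is the $H_5$-valued predicate hyperdoctrine of Example~\ref{ex:h-valued-predicate-hyperdoctrine}, taken over the finite base $\mathcal B_2$ of Proposition~\ref{prop:h3-epsilon-model}. Its objects are the base types with $|\mathbf 1|=\{*\}$, $|S|=\mathbf 2$ and $|I\times A|=|I|\times|A|$; its arrows are all functions, compared by pointwise equality. The fibres are $P_5(I)=H_5^{|I|}$, ordered pointwise, and reindexing is precomposition. Since $H_5$ is finite, it is a complete Heyting algebra, so joins and meets over the finite quantified coordinate give $\Sigma_\pi$ and $\Pi_\pi$. The adjunctions and Beck--Chevalley then hold pointwise, as in the Example.

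\textbf{Step 1: base model.} Because the language is $\mathcal L$ (epsilon-free) and the signature has no function symbols, no labelled constants need interpreting. Typed terms are interpreted by the cartesian structure of $\mathcal B_2$. Independence from the typing derivation follows by induction on typing, since all cartesian equations hold on the nose among functions. Also $|S|=\mathbf 2\neq\{*\}$, so $S_{\mathcal B}\ne\mathbf 1$.

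\textbf{Step 2: atomic interpretation.} I would reuse the raw-evaluation device from the proof of Proposition~\ref{prop:h3-epsilon-model}. Set $\mathcal A_I(P,t)(v):=P_5(\text{eval}(t,v))$ when the raw term $t$ evaluates at the environment $v\in|I|$ to an element of $\mathbf 2$, and $\top$ otherwise. Here $P_5(\mathsf t)=h_1$ and $P_5(\mathsf f)=h_2$. With sequential evaluation of composition, $\text{eval}(t\circ f,v)=\text{eval}(t,\text{eval}(f,v))$, which gives (At-subst) even for ill-typed raw terms. (At-cong) follows because equivalent typed terms denote equal functions, by the equational soundness part of Theorem~\ref{thm:base-soundness-completeness}. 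I expect this raw-term bookkeeping to be the main technical obstacle, exactly as in the $H_3$ case, and I would handle it by the same tagged-value evaluation.

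\textbf{Step 3: computation.} In $P_5(\mathbf 1)\cong H_5$, writing $\Phi=P(\pi_2)$ over $\mathbf 1\times S$, we have $\sem{\exists y\,P(y)}=h_1\lor h_2=h_{12}$. Its weakening along $\pi_1$ is constantly $h_{12}$, by Proposition~\ref{prop:semantic-substitution}. Then the implication inside the Dual Drinker formula takes the value $h_{12}\Rightarrow h_1=h_1$ at $x=\mathsf t$ and $h_{12}\Rightarrow h_2=h_2$ at $x=\mathsf f$. The outer join is therefore $h_1\lor h_2=h_{12}$. Finally, $h_{12}<\top$ by the displayed order of $H_5$.
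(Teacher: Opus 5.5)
Your proposal is correct and follows the paper's own proof essentially step for step. Both use the $H_5$-valued predicate hyperdoctrine over $\mathcal B_2$ and tagged raw evaluation of base terms to secure \textup{(At-subst)} and \textup{(At-cong)} for raw atoms. Both then compute pointwise $\sem{\exists y\,P(y)}=h_{12}$ and $h_{12}\Rightarrow h_i=h_i$, so the outer join is $h_1\lor h_2=h_{12}<\top$.

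One small point: in Step~1, independence of the typing derivation is better justified as in the $H_3$ proof, by showing that the typed interpretation agrees with raw evaluation. The cartesian equations compare different terms, not different derivations of the same term.
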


\begin{proof}
Take the cartesian base $\mathcal B_2$ of finite products of
$\mathbf 2=\{\mathsf t,\mathsf f\}$ and the full predicate fibres
$P_5(I)=H_5^{|I|}$. Substitution is precomposition, connectives are
pointwise, and quantifiers are joins and meets over the quantified
coordinate. Interpret the unary predicate by
$\rho(\mathsf t)=h_1$ and $\rho(\mathsf f)=h_2$, so that
$\sem{P(u)}=\rho\circ\sem{u}$ for a well-typed $u$.
These are the predicate-hyperdoctrine operations already checked above.
For raw atoms the formal model uses a tagged evaluation of raw base
terms; substitution and congruence are verified before soundness is used.

\begin{figure}[htbp]
    \centering
    \definecolor{hFiveAccent}{HTML}{285F78}
    \begin{tikzpicture}[
        x=1cm, y=1cm,
        font=\small,
        value/.style={inner sep=3pt, outer sep=1pt},
        fibre/.style={draw=black!35, fill=black!1,
                      line width=0.6pt, rounded corners=3pt},
        order/.style={draw=black!65, line width=0.7pt},
        action/.style={draw=hFiveAccent, line width=0.85pt,
                       -{Stealth[length=2mm,width=1.4mm]}},
        lies over/.style={draw=black!45, densely dotted, line width=0.6pt}
    ]
        \draw[fibre] (-2.1,1.8) rectangle (2.1,6.0);
        \draw[fibre] (4.9,1.8) rectangle (9.1,6.0);
        \draw[black!20, line width=0.4pt] (-2.1,5.25) -- (2.1,5.25);
        \draw[black!20, line width=0.4pt] (4.9,5.25) -- (9.1,5.25);
        \node at (0,5.63) {$P_5(\mathbf 1)\cong H_5$};
        \node at (7,5.63) {$P_5(S)=H_5^{\mathbf 2}$};

        \node[value] (hTop) at (0,4.77) {$\top$};
        \node[value, text=hFiveAccent, fill=hFiveAccent!10,
              rounded corners=2pt] (hJoin) at (0,3.88) {$h_{12}$};
        \node[value] (hLeft) at (-0.85,2.98) {$h_1$};
        \node[value] (hRight) at (0.85,2.98) {$h_2$};
        \node[value] (hBot) at (0,2.22) {$\bot$};
        \draw[order] (hBot) -- (hLeft) -- (hJoin) -- (hTop);
        \draw[order] (hBot) -- (hRight) -- (hJoin);

        \node[text=hFiveAccent] at (7,4.60)
            {$\rho:\mathbf 2\longrightarrow H_5$};
        \node[value] (rhoT) at (5.65,3.65) {$\mathsf t$};
        \node[value] (rhoF) at (5.65,2.65) {$\mathsf f$};
        \node[value] (rhoHOne) at (8.35,3.65) {$h_1$};
        \node[value] (rhoHTwo) at (8.35,2.65) {$h_2$};
        \draw[action] (rhoT) -- (rhoHOne);
        \draw[action] (rhoF) -- (rhoHTwo);

        \draw[action] (4.72,3.88) -- node[above=5pt] {$\Sigma_{!_S}$}
            node[below=5pt, text=hFiveAccent] {$\rho\mapsto h_{12}$}
            (2.28,3.88);

        \draw[draw=black!25, fill=black!2, line width=0.6pt,
              rounded corners=3pt] (-2.1,-0.48) rectangle (9.1,1.0);
        \node at (-1.40,0.45) {$\mathcal B_2$};
        \node[value] (baseOne) at (0,0.45) {$\mathbf 1$};
        \node[value] (baseS) at (7,0.45) {$S$};
        \node[font=\footnotesize] at (7,-0.13)
            {$|S|=\mathbf 2=\{\mathsf t,\mathsf f\}$};
        \draw[lies over] (0,1.8) -- (baseOne.north);
        \draw[lies over] (7,1.8) -- (baseS.north);
        \draw[draw=black!70, line width=0.7pt,
              -{Stealth[length=2mm,width=1.4mm]}]
            (baseS.west) -- node[below=4pt] {$!_S$} (baseOne.east);
    \end{tikzpicture}
    \caption{Two fibres of the $H_5$-valued countermodel, with
    $|S|=\mathbf 2$. The right panel displays the predicate
    $\rho(\mathsf t)=h_1$, $\rho(\mathsf f)=h_2$.
    Existential quantification along $!_S:S\to\mathbf 1$ gives
    $\Sigma_{!_S}\rho=h_1\lor h_2=h_{12}$, using
    $\mathbf 1\times S\cong S$.}
    \label{fig:h5_hyperdoctrine_structure}
\end{figure}

It remains to evaluate the displayed formula. Put
$\Theta:=\exists y:S\,P(y)$,
$\Omega:=\Theta[\pi_1]\to P(x)$, and $D:=\exists x:S\,\Omega$.
Here $\Theta,D$ are closed iFOL formulae and $\Omega$ is over
$\mathbf 1\times S$. The calculation in $\mathcal M_5$ is
\[
\begin{array}{c|c|c|c}
 x&\rho(x)&\sem{\Theta[\pi_1]}(*,x)&
        \sem{\Omega}(*,x)\\\hline
 \mathsf t&h_1&h_{12}&h_{12}\Rightarrow h_1=h_1\\
 \mathsf f&h_2&h_{12}&h_{12}\Rightarrow h_2=h_2
\end{array}
\]
Indeed, existential quantification first gives
$\sem{\Theta}(*)=h_1\lor h_2=h_{12}$; weakening makes this value
constant in $x$, and implication is computed pointwise.
A second existential quantification gives
\[
 \sem{D}(*)=\bigvee_{x\in\mathbf 2}\sem{\Omega}(*,x)
 =h_1\lor h_2=h_{12}<\top.
\]
Validity of this closed formula would require a fibre arrow
$\top\to h_{12}$, that is $\top\leq h_{12}$, which is false.
\end{proof}

\begin{definition}[The iFOL embedding, \coqterm{ifol_to_epsilon_h_ty}]
\label{def:ifol-embedding}
For a fixed typed function--predicate signature $\Sigma$, the structural map
\[
 \jmath:\mathcal L\longrightarrow
 \mathcal L_{\exists\varepsilon}
\]
preserves base types, primitive functions, predicates, logical
constructors, and their proof terms. Its image contains no
labelled base constant.
\end{definition}

\begin{proposition}[The iFOL embedding preserves derivations, \coqterm{ifol_to_epsilon_derivation}]
\label{prop:ifol-embedding}
Fix a typed function--predicate signature $\Sigma$ and the structural
embedding $\jmath:\mathcal L\to\mathcal L_{\exists\varepsilon}$ that
preserves base types, primitive symbols, logical constructors, and their
proof terms. For every iFOL derivation,
\[
 I\mid\Gamma\vdash_{\mathrm{iFOL}}p:\Phi
 \quad\Longrightarrow\quad
 \jmath(I)\mid\jmath(\Gamma)
 \vdash_{\exists\varepsilon}\jmath(p):\jmath(\Phi).
 \tag{iFOL-Emb}
\]
\end{proposition}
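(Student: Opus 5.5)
The proof is by induction on the iFOL derivation of $I\mid\Gamma\vdash_{\mathrm{iFOL}}p:\Phi$. Before the main induction we establish three commutation lemmas for $\jmath$, each by structural induction on raw syntax.

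The lemmas are these.
\begin{enumerate}
\item[(a)] Base typing and base equivalence are preserved. From $I\vdash_{\mathcal L}t:A$ we get $\jmath(I)\vdash\jmath(t):\jmath(A)$, and $t\equiv_{I,A}u$ in $\mathcal L$ gives $\jmath(t)\equiv\jmath(u)$. This holds because the base rules of $\mathcal L$ are a subset of those of $\mathcal L_{\exists\varepsilon}$.
\item[(b)] Formation is preserved. If $I\vdash\Phi\;\mathsf{form}$ in $\mathcal L$, then $\jmath(I)\vdash\jmath(\Phi)\;\mathsf{form}$, and similarly for contexts.
\item[(c)] Substitution commutes with $\jmath$ on the nose: $\jmath(\Phi[f])=\jmath(\Phi)[\jmath(f)]$, $\jmath(\Gamma[f])=\jmath(\Gamma)[\jmath(f)]$, and $\jmath(p[f])=\jmath(p)[\jmath(f)]$. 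In particular $\jmath(f_A^+)=\jmath(f)_A^+$, $\jmath(\pi_1)=\pi_1$ and $\jmath(\Phi[a])=\jmath(\Phi)[\jmath(a)]$. This is syntactic equality, since both substitution operations are defined by the same recursion clauses and $\jmath$ is a homomorphism of raw syntax.
\end{enumerate}

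From these we derive a fourth lemma: iFOL formula conversion $\Phi\simeq_I\Psi$ implies $\jmath(\Phi)\simeq_{\jmath(I)}\jmath(\Psi)$. The proof is by induction on the conversion derivation. Each iFOL conversion clause (reflexivity on well-formed formulae, symmetry, transitivity, connective and quantifier congruence, the atomic clause, and the identity, composition and congruence substitution clauses) is literally a clause of the $\mathrm{iFOL}_{\exists\varepsilon}$ conversion. The premises transfer by (a), (b) and (c), and the $\varepsilon$-clauses are never needed. Context conversion follows componentwise.

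For the main induction, each iFOL rule is mapped to the identically named $\mathrm{iFOL}_{\exists\varepsilon}$ rule, applied to the induction hypotheses. The side conditions are discharged as follows.
\begin{itemize}
\item Well-formedness side conditions use (b).
\item Base typing of the witness in $\mathsf{allE}$ and $\mathsf{exI}$ uses (a).
\item The conclusions $\Phi[a]$ in $\mathsf{allE}$, and the premises $\Phi[a]$ in $\mathsf{exI}$, use (c).
\item The premise contexts $\Gamma[\pi_1]$ and the targets $\Psi[\pi_1]$ in $\mathsf{allI}$ and $\mathsf{exE}$ also use (c).
\item The two conversion rules use the conversion lemma.
\end{itemize}
No case involves $\mathsf{epsI}$ or $\mathsf{epsE}$, since iFOL has no such rules.

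The main obstacle I expect is lemma (c) in the quantifier and proof-term clauses, together with the bookkeeping that $\jmath$ fixes the specific terms $\pi_1,\pi_2$, $\langle\mathrm{id},a\rangle$ and $f_A^+$ produced inside the recursion. If the two languages define these with different raw encodings, exact equality could fail. In that case I would fall back to proving $\jmath(\Phi[f])\simeq\jmath(\Phi)[\jmath(f)]$ via the substitution-congruence conversions, and insert conversion steps in the affected cases. Because formation and conversion are mutually entangled with substitution, lemmas (b) and (c) should be proved before the conversion lemma.
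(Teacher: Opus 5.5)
Your proposal is correct and takes essentially the paper's route. The paper likewise first shows that base typing and formula formation are preserved, then that conversion is preserved because the iFOL conversion clauses are a subset of those of $\mathrm{iFOL}_{\exists\varepsilon}$, and finally maps each iFOL rule to the corresponding $\mathrm{iFOL}_{\exists\varepsilon}$ rule by induction on the derivation. Your commutation lemma $\jmath(\Phi[f])=\jmath(\Phi)[\jmath(f)]$ is only implicit in the paper but is exactly what the quantifier cases need; the proof-term clause $\jmath(p[f])=\jmath(p)[\jmath(f)]$ is harmless but unnecessary, since no derivation rule substitutes into proof terms.
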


\begin{proof}
Induction on base typing and formula formation gives preservation of
well-formed sequents. Formula conversion is preserved because its
clauses are shared by the target calculus. Induction on the iFOL
derivation then applies the corresponding rule of
$\mathrm{iFOL}_{\exists\varepsilon}$ at each step.
\end{proof}

\begin{theorem}[Dual Drinker and non-conservativity over iFOL, \coqterm{epsilonfull_not_conservative_over_ifol}]
\label{thm:dual-drinker-separation}
For the constant-free signature with a primitive sort $S$ and a unary
predicate $P$,
\[
\begin{aligned}
 1\mid\varnothing
 &\nvdash_{\mathrm{iFOL}}
 \exists x:S\,((\exists y:S\,P(y))\to P(x)),\\
 1\mid\varnothing
 &\vdash_{\exists\varepsilon}
 \exists x:S\,((\exists y:S\,P(y))\to P(x)).
\end{aligned}
\]
Consequently, $\vdash_{\exists\varepsilon}$ is not conservative over
$\vdash_{\mathrm{iFOL}}$ along $\jmath$.
(Cf.~\cite[pp.~5--6]{WarrenDienerMcKubreJordens2018}).
\end{theorem}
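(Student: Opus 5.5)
The plan is to establish the two displayed judgements separately and then combine them via the embedding $\jmath$.

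For the underivability in iFOL, I will argue by soundness. Suppose $1\mid\varnothing\vdash_{\mathrm{iFOL}}\exists x:S\,((\exists y:S\,P(y))\to P(x))$. The forward (soundness) direction of Theorem~\ref{thm:ifol-soundness-completeness} then says that every iFOL-hyperdoctrine has a fibre arrow from $\sem{\varnothing}=\top$ to the interpretation of this formula. Now take the model $\mathcal M_5$ of Proposition~\ref{prop:h5-dual-drinker-model}. There the closed formula is interpreted by $h_{12}$ in $P_5(\mathbf 1)\cong H_5$. Since the fibre is ordered pointwise, such an arrow would force $\top\leq h_{12}$, which is false in $H_5$. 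One bookkeeping point needs checking: the displayed formula, written with $P(\pi_2)$ over $\mathbf 1\times S$ and $(\exists_S P(\pi_2))[\pi_1]$, must be literally the formula $D$ evaluated in that proposition. It must also be well formed over $\mathbf 1$, so that the validity notation applies.

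For derivability in $\mathrm{iFOL}_{\exists\varepsilon}$, I will instantiate Proposition~\ref{prop:dual-drinker-derivable} with $I:=\mathbf 1$, $A:=S$, and $\Phi:=P(\pi_2)$ over $\mathbf 1\times S$. This formula is well formed because $\mathbf 1\times S\vdash\pi_2:S=\operatorname{ar}(P)$. The proposition gives the closed proof term $d_\Phi=\mathsf{exI}(\varepsilon_\Phi,e_\Phi)$ of $\exists_S((\exists_S\Phi)^\uparrow\to\Phi)$. This is exactly the displayed formula, read in $\mathcal L_{\exists\varepsilon}$, which by the Dual Drinker definition is the image $\jmath$ of the iFOL formula. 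The signature has no labelled constants to worry about, since $\varepsilon_\Phi$ is always available in the base syntax by the labelled-constant typing rule.

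To conclude non-conservativity, I note that the formula lies in the image of $\jmath$ and is derivable after embedding, but is not derivable in iFOL. So $\vdash_{\exists\varepsilon}$ proves strictly more $\jmath$-translated sequents than $\vdash_{\mathrm{iFOL}}$. Proposition~\ref{prop:ifol-embedding} supplies the other inclusion, so the extension is genuine rather than incomparable.

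The main obstacle is purely syntactic bookkeeping. I must ensure that the formula refuted in $\mathcal M_5$, the iFOL formula, and the $\jmath$-image proved in $\mathrm{iFOL}_{\exists\varepsilon}$ coincide as raw syntax: the same projections, the same lifting $[\pi_1]$, and the same placement of $\exists$. If any presentations differ, I would bridge them by formula conversion (Proposition~\ref{prop:semantic-formula-conversion} on the semantic side and the conversion rule on the syntactic side).
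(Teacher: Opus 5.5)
Your proposal is correct and follows the paper's proof. Soundness of iFOL in the $H_5$-model $\mathcal M_5$ refutes the first sequent because $\top\not\leq h_{12}$; Proposition~\ref{prop:dual-drinker-derivable} with $I=\mathbf 1$, $A=S$, $\Phi=P(\pi_2)$ gives the $\mathrm{iFOL}_{\exists\varepsilon}$ derivation; and \textup{(iFOL-Emb)}, together with $\jmath$ sending the first formula to the second, gives non-conservativity. Your conversion-based fallback is unnecessary, since the Dual Drinker definition already identifies the presentations, and on the iFOL side you would use the conversion case of iFOL soundness rather than Proposition~\ref{prop:semantic-formula-conversion}, which concerns $\varepsilon$-models.
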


\begin{proof}
The second statement is Proposition~\ref{prop:dual-drinker-derivable}. For
the first, the soundness direction of
Theorem~\ref{thm:ifol-soundness-completeness} sends an iFOL
derivation to a morphism
$\top\to h_{12}$ in $P_5(1)$. Such a morphism would mean
$\top\leq h_{12}$, contradicting
Proposition~\ref{prop:h5-dual-drinker-model}.

The first displayed formula belongs to $\mathcal L$, and $\jmath$ sends
it to the second. By \textup{(iFOL-Emb)}, $\jmath$ preserves derivations,
but the epsilon-free sequent displayed above is derivable in the
extension and underivable in iFOL. This is the asserted failure of
conservativity.
\end{proof}

\subsection{Base equations between selected witnesses}
\label{sec:witness-choice-substitution}

Substitution inside the indexing formula gives
$\varepsilon_{\Phi[g_A^+]}$, with the bound coordinate unchanged
\cite[Sec.~2.1]{GieseAhrendt1999}. Substitution of the base term itself is
composition, $\varepsilon_\Phi\circ g$. In the Skolem representation
\cite[pp.~481--482]{Monk1976}, compatibility of these two operations is an
additional requirement. We consider its definitional and categorical versions.

\begin{definition}[Definitional substitutivity, \coqterm{EpsilonFullSubstitutivity.DefinitionalSubstitutivity}]
\label{def:definitional-substitutivity}
\label{not:substitution-equation}
In the base syntax of $\mathrm{iFOL}_{\exists\varepsilon}$ over $\Sigma$, let $I,J,A\in\mathsf{BType}$,
$I\vdash g:J$, and $J\times A\vdash\Phi\;\mathsf{form}$. Write
$I\times A\vdash
g_A^+:=\langle g\circ\pi_1,\pi_2\rangle:J\times A$.
\emph{Definitional substitutivity} is the requirement that
\[
 I\vdash\varepsilon_{\Phi[g_A^+]}
 \equiv\varepsilon_\Phi\circ g:A
\tag{$\mathrm{Sub}_{g,\Phi}$}
\]
hold for all such $I,J,A,g,\Phi$. Here \emph{definitional} refers to
the cartesian base-term equivalence $\equiv$ generated by identity, associativity, product, terminal, and congruence equations.
(Cf.~\cite{Molnar2013}).
\end{definition}

\begin{remark}[Comparison with Moln\'ar's substitutivity, \coqterm{EpsilonFullSubstitutivity.counterexample_equation_valid_in}]
Let $\mathcal N$ be an intensional $\varepsilon$-structure with universe
$D$ and variable set $V$. Let $x,y\in V$ be distinct, $\varphi$ a
formula, and $t$ a term free for $y$ in $\varepsilon x\,\varphi$.
For an assignment $\alpha\in D^V$, define
\[
 e_\varphi(\alpha):=(\varepsilon x\,\varphi)^{\mathcal N}(\alpha),
 \qquad
 g_t(\alpha):=\alpha[y\mapsto t^{\mathcal N}(\alpha)].
\]
Thus $e_\varphi:D^V\to D$ and $g_t:D^V\to D^V$.
Moln\'ar's substitutivity condition, also formulated by Giese and Ahrendt, is

\[
 e_{\varphi[t/y]}(\alpha)
 =e_\varphi\bigl(\alpha[y\mapsto t^{\mathcal N}(\alpha)]\bigr)
 =(e_\varphi\circ g_t)(\alpha).
\]
For the data of Definition~\ref{def:definitional-substitutivity} and a thin
$\varepsilon$-hyperdoctrine $\mathcal M$ interpreting the fixed language,
the two requirements have the forms
\[
\begin{aligned}
 e_{\varphi[t/y]}
 &=e_\varphi\circ g_t
 &&\text{(Moln\'ar)},\\
 \sem{\varepsilon_{\Phi[g_A^+]}}_{\mathcal M}
 &\sim_{\mathcal B_{\mathcal M}}
 \sem{\varepsilon_\Phi}_{\mathcal M}\circ\sem{g}_{\mathcal M}
 &&\text{(categorical)}.
\end{aligned}
\]
Both sides of the second line are base morphisms
$\sem{I}_{\mathcal M}\to\sem{A}_{\mathcal M}$.
The map $g_t$ changes one free variable, while $\sem{g}$ changes the free
context. The lift $g_A^+$ leaves the bound variable unchanged.
(Cf.~\cite[Def.~2.11]{Molnar2013}, \cite[Def.~5]{GieseAhrendt1999}).
\end{remark}

\begin{remark}[Formula substitution and the witness equation, \coqterm{EpsilonFull.Hyper_Syntax.FE_Eps_BC}]
The $\varepsilon$-Beck--Chevalley conversion gives
$\Phi[g_A^+][\varepsilon_{\Phi[g_A^+]}]
\simeq_I\Phi[\varepsilon_\Phi][g]$.
It compares the two selected formula instances, whereas
$\mathrm{Sub}_{g,\Phi}$ compares their base terms. We study one instance of
the latter equation without adjoining the definitional substitutivity
schema to the calculus.
(Cf.~\cite{Molnar2013}).
\end{remark}

\begin{definition}[The separating instance, \coqterm{EpsilonFullSubstitutivity.substitution_counterexample_formula}, \coqterm{EpsilonFullSubstitutivity.witness_of_substituted_formula}, \coqterm{EpsilonFullSubstitutivity.substituted_witness}]
\label{def:separating-instance}
In $\mathrm{iFOL}_{\exists\varepsilon}$, take no primitive function symbols and one predicate symbol $R$ with
$\operatorname{ar}(R)=S$, and use its language. Define
$\mathbf 1\vdash g:=c^S_{\langle S,\top\rangle}:S$ and
$\Phi:=R(\pi_1)$ over $S\times S$. The lift of $g$ is
$\mathbf 1\times S\vdash
g_S^+:=\langle g\circ\pi_1,\pi_2\rangle:S\times S$.
The two terms under comparison are
$\varepsilon_{\Phi[g_S^+]}$ and $\varepsilon_\Phi\circ g$, both of
type $\mathbf 1\to S$.
\end{definition}

\begin{theorem}[Independence of a definitional substitution equation, \coqterm{EpsilonFullSubstitutivity.separating_model_refutes_substitutivity}, \coqterm{EpsilonFullSubstitutivity.uniform_model_validates_substitutivity}]
\label{thm:witness-substitution-separation}
Take the $\mathrm{iFOL}_{\exists\varepsilon}$-signature with no primitive functions and one unary predicate $R$, put $\mathbf 1\vdash g=c^S_{\langle S,\top\rangle}:S$, $\Phi=R(\pi_1)$ over $S\times S$, and $g_S^+=\langle g\circ\pi_1,\pi_2\rangle$. There are
thin $\varepsilon$-hyperdoctrines $\mathcal M_{\ne}$ and
$\mathcal M_{=}$ over the same cartesian base, with the same constant
$H_3$-fibres and atomic interpretation, such that
\[
\begin{aligned}
 \sem{\varepsilon_{\Phi[g_S^+]}}_{\mathcal M_{\ne}}
 &\not\sim
 \sem{\varepsilon_\Phi}_{\mathcal M_{\ne}}\circ
 \sem{g}_{\mathcal M_{\ne}},\\
 \sem{\varepsilon_{\Phi[g_S^+]}}_{\mathcal M_{=}}
 &\sim
 \sem{\varepsilon_\Phi}_{\mathcal M_{=}}\circ
 \sem{g}_{\mathcal M_{=}}.
\end{aligned}
\]
Moreover, the corresponding base equation is not derivable:
\[
 \mathbf 1\nvdash\varepsilon_{\Phi[g_S^+]}
 \equiv\varepsilon_\Phi\circ g:S.
\]
(Cf.~\cite{Molnar2013}).
\end{theorem}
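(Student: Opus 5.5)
The plan is to build both models by the construction in the proof of Proposition~\ref{prop:h3-epsilon-model}, changing only the tie-breaking rule used to choose $\varepsilon$-witnesses. The separating instance is deliberately degenerate: $\Phi=R(\pi_1)$ over $S\times S$ does not depend on its bound coordinate. Likewise $\Phi[g_S^+]=R(g\circ\pi_1)$ over $\mathbf 1\times S$ does not depend on its bound coordinate either. So every element of $|S|=\mathbf 2$ maximizes the relevant value, and any choice of witness satisfies $(\varepsilon\Sigma)$.

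\emph{Common data.} Take the base $\mathcal B_2$ and the fibres $P(I)=H_3^{|I|}$ with precomposition, as in Proposition~\ref{prop:h3-epsilon-model}. Interpret $R$ by the constant predicate $R(\mathsf t)=R(\mathsf f)=m$; any constant value would do. Use the same tagged raw evaluation of base terms and atoms in both models, so the atomic interpretation, (At-subst) and (At-cong) are literally shared.

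\emph{Choosing witnesses.} Interpret labelled constants $c_s^B$ by recursion on label rank (Corollary~\ref{cor:label-recursion}), each time taking a maximizer of the raw value of the label's formula.
\begin{itemize}
\item In $\mathcal M_{=}$, break ties by the fixed enumeration, which lists $\mathsf t$ before $\mathsf f$.
\item In $\mathcal M_{\ne}$, use the same rule except for the single label $\langle S,\Phi[g_S^+]\rangle$ at output type $S$: there, choose the last maximizer in the enumeration.
\end{itemize}
Because the recursion always recomputes maximizers from the current values, every constant in $\mathcal M_{\ne}$ is still a maximizer. This includes constants whose labels contain the modified one. So the displayed identity $\sem{\Phi}(x,\sem{\varepsilon_\Phi}(x))=\bigvee_a\sem{\Phi}(x,a)$ holds verbatim, and $\mathcal M_{\ne}$ is a thin $\varepsilon$-hyperdoctrine. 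The remaining verifications are identical to those of Proposition~\ref{prop:h3-epsilon-model}: typed evaluation agrees with raw evaluation and is independent of the typing derivation, and atomic congruence holds.

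\emph{Computing the witnesses.} The label $\langle S,\top\rangle$ has constant value $\top$, so in both models $\sem{g}$ is the first maximizer, the point $\mathsf t$. Since $\sem{\Phi}(x,a)=m$ for all $x,a$, in both models $\sem{\varepsilon_\Phi}$ is constantly $\mathsf t$, and hence $\sem{\varepsilon_\Phi}\circ\sem{g}$ is the point $\mathsf t$. Since $\Phi[g_S^+]$ is constantly $m$ on $\{*\}\times\mathbf 2$, we get:
\begin{itemize}
\item in $\mathcal M_{=}$, $\sem{\varepsilon_{\Phi[g_S^+]}}$ is the point $\mathsf t$, so the equation holds;
\item in $\mathcal M_{\ne}$, $\sem{\varepsilon_{\Phi[g_S^+]}}$ is the point $\mathsf f$.
\end{itemize}
In $\mathcal M_{\ne}$ the two sides therefore differ under pointwise equality.

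\emph{Underivability.} The base reduct of $\mathcal M_{\ne}$ lies in $\mathsf{BaseMod}(\Sigma)$. By the soundness direction of (eq) in Theorem~\ref{thm:base-soundness-completeness}, a derivation of $\mathbf 1\vdash\varepsilon_{\Phi[g_S^+]}\equiv\varepsilon_\Phi\circ g:S$ would force the two interpretations to agree in $\mathcal M_{\ne}$, which they do not.

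The main obstacle is confirming that the local change of tie-breaking in $\mathcal M_{\ne}$ stays inside the well-founded recursion and preserves derivation-independence. I would treat this by making the tie-breaking rule a function of the label, which the rank recursion admits directly.
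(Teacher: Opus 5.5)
Your proposal is correct, but it reaches the separation by a different and heavier route than the paper.

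\textbf{The paper's route.} The paper does not reuse the maximizing construction of Proposition~\ref{prop:h3-epsilon-model}. It takes the degenerate hyperdoctrine $P(I)=H_3$ for every $I$, with $P(f)=\mathrm{id}_{H_3}$, $\Sigma_\pi=\Pi_\pi=\mathrm{id}_{H_3}$, and every raw atom interpreted by $m$. Then $\sem{\Theta[f]}=\sem{\Theta}$ for every raw $\Theta$ and base term $f$. Hence $(\varepsilon\Sigma)$ and all conversion laws hold for \emph{any} interpretation of the labelled constants. The witnesses are chosen freely: the default $d_S=\mathsf f$ everywhere, and $\mathsf t$ only at the label $\langle S,\Phi[g_S^+]\rangle$ in $\mathcal M_{\ne}$. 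No rank recursion, maximization, or tie-breaking is needed.

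\textbf{Your route.} You keep the genuine pointwise fibres $H_3^{|I|}$, with joins and meets as quantifiers. Your freedom of choice comes from the two indexing formulae being constant in the bound coordinate, so every witness is a maximizer. A label-dependent tie-breaking rule can then be threaded through the well-founded recursion.

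\textbf{What each buys.} Your route shows that the separation survives in a non-degenerate predicate semantics where $\Sigma_\pi$ is an honest join. The cost is that you inherit the whole recursive construction and its coherence checks. You also need an extra argument that the raw-evaluation atomic data are literally the same in both models, which you assert but do not prove. It does hold: $R$ is constant, and failure of tagged evaluation depends only on value shapes, not on which Booleans the constants return.

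\textbf{Mismatch with the statement.} Read literally, the statement asks for \emph{constant} $H_3$-fibres. Your two models do share their fibres and atomic data, but those fibres are $H_3^{|I|}$. To match that clause you would need the paper's degenerate choice.

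\textbf{Two points of precision.}
\begin{enumerate}
\item As a raw formula, $\Phi[g_S^+]$ is $R(\pi_1\circ g_S^+)$, not $R(g\circ\pi_1)$. The two are only base-equivalent, and labels are raw.
\item You should state explicitly that $\langle S,\top\rangle$, $\langle S,\Phi\rangle$, and $\langle S,\Phi[g_S^+]\rangle$ are pairwise distinct raw labels. This is what guarantees that your modified tie-breaking leaves $\sem{g}$ and $\sem{\varepsilon_\Phi}$ unchanged.
\end{enumerate}
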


\begin{proof}
Use the cartesian base $\mathcal B_2$: its objects are base types,
$|\mathbf 1|=\{*\}$, $|S|=\mathbf 2$, $|A\times B|=|A|\times|B|$,
and its arrows are all functions between these carriers, with pointwise
equality. Define default elements by
$d_{\mathbf 1}:=*$, $d_S:=\mathsf f$, and
$d_{A\times B}:=(d_A,d_B)$.

Let $k_*:=\langle S,\Phi[g_S^+]\rangle$. In $\mathcal M_{\ne}$, interpret
each constant $I\vdash c_k^A:A$ by the constant function with value
\[
 \sem{c_k^A}_{\mathcal M_{\ne}}(x):=
 \begin{cases}
  \mathsf t,& A=S\text{ and }k=k_*,\\
  d_A,&\text{otherwise},
 \end{cases}
 \qquad x\in|I|.
\]
In $\mathcal M_{=}$, interpret it by
$\sem{c_k^A}_{\mathcal M_{=}}(x):=d_A$ for every $k$.
These interpretations extend to base terms by the clauses of
Definition~\ref{def:base-model} and respect base-term equivalence.

For either interpretation, take $P(I):=H_3=\{\bot<m<\top\}$,
$P(f):=\mathrm{id}_{H_3}$, and
$\Sigma_\pi=\Pi_\pi:=\mathrm{id}_{H_3}$. Interpret every raw atom by $m$.
The relevant fibres are
\[
\begin{tikzcd}[row sep=2.8em,column sep=6em]
 P(S\times S)=H_3
   \arrow[r,"P(\sem{g_S^+})=\mathrm{id}"]
   \arrow[d,phantom,"\scriptstyle\textup{over}" description]
 & P(\mathbf 1\times S)=H_3
   \arrow[d,phantom,"\scriptstyle\textup{over}" description]\\
 S\times S
 & \mathbf 1\times S \arrow[l,"\sem{g_S^+}"']
\end{tikzcd}
\]
Since every fibre substitution in these two models is the identity,
induction on a raw formula $\Theta$ gives
$\sem{\Theta[f]}=\sem{\Theta}$ for every base term $f$.
Consequently,
\[
 \Sigma_{\pi_{I,A}}(\sem{\Theta})
 =\sem{\Theta}
 =\sem{\Theta[\varepsilon_\Theta]}.
\]
Thus both choices of constant interpretation satisfy
$(\varepsilon\Sigma)$ and all formula-conversion laws.

The raw formulae $\top$, $\Phi=R(\pi_1)$, and
$\Phi[g_S^+]=R(\pi_1\circ g_S^+)$ have distinct labels.
Therefore
\[
\begin{aligned}
 \sem{g}_{\mathcal M_{\ne}}(*)&=\mathsf f,
 &\sem{\varepsilon_\Phi}_{\mathcal M_{\ne}}(b)&=\mathsf f
       &&(b\in\mathbf 2),\\
 \sem{\varepsilon_{\Phi[g_S^+]}}_{\mathcal M_{\ne}}(*)&=\mathsf t,
 &(\sem{\varepsilon_\Phi}_{\mathcal M_{\ne}}\circ
   \sem{g}_{\mathcal M_{\ne}})(*)&=\mathsf f.
\end{aligned}
\]
Thus the two interpretations in $\mathcal M_{\ne}$ differ because
$\mathsf t\ne\mathsf f$.
In $\mathcal M_{=}$, both functions $\mathbf 1\to S$ have value
$\mathsf f$, proving the second equation. Finally, a derivation of
$\mathbf 1\vdash\varepsilon_{\Phi[g_S^+]}\equiv
\varepsilon_\Phi\circ g:S$ would give
$\mathsf t=\mathsf f$ in $\mathcal M_{\ne}$ by base soundness,
Theorem~\ref{thm:base-soundness-completeness}\textup{(eq)}.
\end{proof}

\begin{corollary}[Definitional $\varepsilon$-extensionality is not forced, \coqterm{EpsilonFullSubstitutivity.convertible_witnesses_not_definitionally_equal}]
\label{cor:epsilon-extensionality-independent}
In $\mathrm{iFOL}_{\exists\varepsilon}$ with no primitive functions and one unary predicate $R$, set $g=c^S_{\langle S,\top\rangle}:\mathbf 1\to S$, $\Phi=R(\pi_1)$ over $S\times S$, and $g_S^+=\langle g\circ\pi_1,\pi_2\rangle$. Let
\[
 \Theta:=\Phi[g_S^+],
 \qquad
 \Psi:=\Theta[\mathrm{id}_{\mathbf 1\times S}].
\]
Then $\Theta,\Psi$ are well formed over $\mathbf 1\times S$ and
\[
 \Theta\simeq_{\mathbf 1\times S}\Psi,
 \qquad
 \mathbf 1\nvdash\varepsilon_\Theta\equiv\varepsilon_\Psi:S.
\]
In $\mathcal M_{\ne}$ their interpretations are distinct, whereas in
$\mathcal M_{=}$ they are equal.
(Cf.~\cite{Molnar2013}).
\end{corollary}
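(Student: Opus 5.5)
The plan has three parts: check well-formedness of $\Theta$ and $\Psi$, derive the conversion from the identity-substitution clause, and refute the base equation in $\mathcal M_{\ne}$ using the models from the proof of Theorem~\ref{thm:witness-substitution-separation}.

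\emph{Well-formedness.} We have $\mathbf 1\vdash g:S$ by the constant typing rule. Hence $\mathbf 1\times S\vdash g_S^+:S\times S$ by the composition, projection and pairing rules. Also $S\times S\vdash\Phi\;\mathsf{form}$, since $S\times S\vdash\pi_1:S=\operatorname{ar}(R)$. Substitution along a typed term preserves well-formedness (the clause $R(t)[f]=R(t\circ f)$ and composition typing), so $\mathbf 1\times S\vdash\Theta\;\mathsf{form}$, where concretely $\Theta=R(\pi_1\circ g_S^+)$. Applying the same argument with $\mathrm{id}_{\mathbf 1\times S}$ gives $\Psi=R((\pi_1\circ g_S^+)\circ\mathrm{id})$, which is also well formed.

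\emph{The conversion.} $\Theta\simeq_{\mathbf 1\times S}\Psi$ is the symmetric instance of the substitution clause $\Theta[\mathrm{id}_I]\simeq_I\Theta$ of Definition~\ref{def:formula-conversion}, applied with $I=\mathbf 1\times S$. This uses the well-formedness of $\Theta$ just established.

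\emph{Non-derivability and the two models.} I would reuse $\mathcal M_{\ne}$ and $\mathcal M_{=}$ unchanged. They are thin $\varepsilon$-hyperdoctrines by the proof of Theorem~\ref{thm:witness-substitution-separation}, because every constant interpretation is a constant function and the fibres are the constant $H_3$-fibres. The key observation is that $\Theta$ and $\Psi$ are distinct raw formulae: $\Psi$ carries an extra $\circ\,\mathrm{id}$ in its atom. Hence the labels $\langle S,\Theta\rangle$ and $\langle S,\Psi\rangle$ are distinct. Moreover $\langle S,\Theta\rangle=k_*$ exactly, since $\Theta=\Phi[g_S^+]$.

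In $\mathcal M_{\ne}$ this gives $\sem{\varepsilon_\Theta}(*)=\mathsf t$ and $\sem{\varepsilon_\Psi}(*)=d_S=\mathsf f$. In $\mathcal M_{=}$ both values are $\mathsf f$. If $\mathbf 1\vdash\varepsilon_\Theta\equiv\varepsilon_\Psi:S$ were derivable, the forward (soundness) part of Theorem~\ref{thm:base-soundness-completeness}(eq) would force $\mathsf t=\mathsf f$ in $\mathcal M_{\ne}$, which is a contradiction.

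The main point to get right is the syntactic distinctness of $\Psi$ from $\Theta$. It requires unfolding the substitution clause $R(t)[f]=R(t\circ f)$ to confirm that raw substitution by the identity is not the identity on raw syntax, so that the labels, and hence the constant interpretations in $\mathcal M_{\ne}$, differ. Everything else is a direct reuse of earlier results.
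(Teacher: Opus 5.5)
Your proposal is correct and follows the paper's proof essentially step for step. The conversion is the symmetric instance of the identity-substitution clause; the raw formulae $\Theta=R(\pi_1\circ g_S^+)$ and $\Psi=R((\pi_1\circ g_S^+)\circ\mathrm{id}_{\mathbf 1\times S})$ carry distinct labels, with $\langle S,\Theta\rangle=k_*$ the exceptional one in $\mathcal M_{\ne}$; and base soundness turns $\mathsf t\ne\mathsf f$ into underivability of $\varepsilon_\Theta\equiv\varepsilon_\Psi$, while your explicit well-formedness check only spells out what the paper leaves implicit.
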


\begin{proof}
The identity-substitution conversion gives
$\Psi\simeq_{\mathbf 1\times S}\Theta$. Symmetry gives the displayed
conversion. As raw formulae,
\[
 \Theta=R(\pi_1\circ g_S^+),
 \qquad
 \Psi=R\bigl((\pi_1\circ g_S^+)\circ
              \mathrm{id}_{\mathbf 1\times S}\bigr),
\]
so their labels are distinct. The exceptional label of
$\mathcal M_{\ne}$ is $\langle S,\Theta\rangle$. Hence
\[
 \sem{\varepsilon_\Theta}_{\mathcal M_{\ne}}(*)=\mathsf t,
 \qquad
 \sem{\varepsilon_\Psi}_{\mathcal M_{\ne}}(*)=\mathsf f.
\]
Base soundness gives the asserted syntactic inequivalence. In
$\mathcal M_{=}$ both terms have the constant value $\mathsf f$.
\end{proof}

\begin{remark}[Scope of the separation, \coqterm{EpsilonFullSubstitutivity.uniform_model_validates_substitutivity}, \coqterm{EpsilonFullSubstitutivity.separating_model_refutes_substitutivity}]
The two models show that the present semantic axioms neither force nor
exclude the concrete equation $\mathrm{Sub}_{g,\Phi}$. This is the sense of
independence used here. The positive part concerns that equation alone. We
do not construct a calculus satisfying the whole definitional
substitutivity schema. For $I\vdash a,b:A$, the assertion
$I\nvdash a\equiv b:A$ is metatheoretic. An expression
$I\vdash a\not\equiv b:A$ is not a judgement of this calculus.
The negative statement about the witnesses themselves is semantic:
their interpretations are unequal in $\mathcal M_{\ne}$.
\end{remark}

\begin{remark}[What the fibres identify, \coqterm{EpsilonFullSubstitutivity.fin_H3_interp_h_ty_subst_const}]
In both models, the two selected instances have value $m$:
\[
 \sem{\Phi[g_S^+][\varepsilon_{\Phi[g_S^+]}]}
 =m
 =\sem{\Phi[\varepsilon_\Phi][g]}.
\]
Thus the formula-level comparison holds even in $\mathcal M_{\ne}$,
where the selected witnesses differ. Corollary~\ref{cor:epsilon-extensionality-independent}
shows moreover that converted formulae can have unequal selected base terms.
For its pair $\Theta\simeq_{\mathbf 1\times S}\Psi$, formula conversion
still gives
$\Theta[\varepsilon_\Theta]
\simeq_{\mathbf 1}\Psi[\varepsilon_\Psi]$.
\end{remark}

\begin{remark}[Internal equality]
Neither separation uses an equality predicate. Suppose one later adds a
predicate $\mathsf{Eq}_A$ of arity $A\times A$ and the rule
\[
 I\vdash a\equiv b:A
 \quad\Longrightarrow\quad
 I\mid\varnothing\vdash_{\exists\varepsilon}
 \mathsf{Eq}_A(\langle a,b\rangle).
\]
Here $I\vdash a,b:A$. In such an extension, definitional substitutivity
would imply its propositional version, for the data of
Definition~\ref{def:definitional-substitutivity},
\[
 I\mid\varnothing\vdash_{\exists\varepsilon}
 \mathsf{Eq}_A\bigl(\langle\varepsilon_{\Phi[g_A^+]},
                          \varepsilon_\Phi\circ g\rangle\bigr).
\]
The converse need not hold. In particular, a countermodel to a base equation
does not by itself refute the corresponding internal equality formula. That
conclusion would require an interpretation reflecting internal equality
into equality of base morphisms.
(Cf.~\cite[p.~226]{Jacobs1999}, \cite{Molnar2013}).
\end{remark}
\section{Formal scope}
\label{sec:initiality-formal-scope}
\label{sec:formalization-coq}

The constructors of \texttt{h\_derivation} take values in \texttt{Type};
logical derivability only asserts inhabitation. Likewise, a chosen
selector or isomorphism is data, while its existence is a proposition.
The model-class construction starts with chosen unit arrows and does not
extract data from propositional existence.

\begin{remark}[Formal assumptions, \coqterm{EpsilonFull_Completeness.epsilon_adjoint_completeness_inhabitational_gen}, \coqterm{EpsilonFull_ModelCategory.TermModel_initial_unique}]
The semantic computations underlying
Theorem~\ref{thm:witness-substitution-separation} and
Corollary~\ref{cor:epsilon-extensionality-independent} use no additional
Rocq axioms. The current proofs of coherence for the two models, and hence
the passage from these semantic refutations to syntactic base-term
inequivalence, depend on
\texttt{Eqdep.Eq\_rect\_eq.eq\_rect\_eq}.
For $\mathrm{iFOL}_{\exists\varepsilon}$, the term-model interpretation identity, ordinary and all-predicate completeness, and initiality for interpretation-preserving maps have the same dependency. The $\mathrm{iFOL}_\varepsilon$ and iFOL completeness theorems do
not. The $H_3$ and $H_5$ countermodel constructions are closed under
the global Rocq context and do not use this dependency.
This is an assumption about dependent equality in the metatheory,
not an object-language witness equation.
\end{remark}

\begin{remark}[Proof terms and further comparisons, \coqterm{EpsilonFull_Completeness.fiber_equiv}]
\label{sec:conclusion-future-work}
The logical fibres identify parallel morphisms, although the syntax
retains proof terms. A proof-relevant version would require an explicit
equality or reduction theory on those terms. Likewise, a comparison with
a calculus with definitional substitutivity requires adjoining every equation
$\mathrm{Sub}_{g,\Phi}$ and studying the corresponding model class. The
present separation concerns a concrete instance and does not make that
extension.
\end{remark}

\backmatter

\section*{Statements and Declarations}

\bmhead{Author contributions}
Zolt\'an G\'abor Moln\'ar contributed the logical development;
Csaba T\'oth contributed the algebraic development.

\bmhead{Data availability}
This is a theoretical study and uses no empirical datasets.
The Rocq formalization is described in the code availability statement.

\bmhead{Code availability}
The HyperLogic repository is at
\url{https://github.com/mozow01/HyperLogic}.
The formalization is checked with Coq 8.18.0. The supported source files
are listed in \texttt{\_CoqProject}; the repository's README gives the
build instructions.

\begin{appendices}
\section{The epsilon-only calculus}
\label{app:epsilon-only}

This appendix gives $\mathrm{iFOL}_\varepsilon$, whose existential
quantification is expressed by epsilon substitution. Its syntax uses its
own recursively nested labels. We give its derivation rules, models,
term model, and soundness and completeness theorem.

\subsection{Syntax and derivations}

\begin{definition}[The $\mathrm{iFOL}_\varepsilon$-language, \coqterm{Epsilon.Hyper_Syntax.h_ty}, \coqterm{CanonicalEpsilon.EpsilonLabel}]
\label{def:ifol-epsilon-language}
Fix a typed function--predicate signature $\Sigma$. The raw terms,
formulae, and labels of $\mathcal L_\varepsilon$ are finite trees given
simultaneously by
\[
\begin{aligned}
 t&::=\mathrm{id}\mid g\circ f\mid {!}\mid\langle f,g\rangle
       \mid\pi_1\mid\pi_2\mid\widehat u\mid c_s^A,\\
 \Phi&::=\top\mid\bot\mid R(t)\mid\Phi\land\Psi\mid\Phi\lor\Psi
       \mid\Phi\to\Psi\mid\forall_A\Phi,\\
 s&::=\langle A,\Phi\rangle.
\end{aligned}
\]
Here $A$ is a base type, $u\in\mathcal F$, $R\in\mathcal R$, and
$\varepsilon_\Phi^A:=c^A_{\langle A,\Phi\rangle}$.
The formulae stored in labels, including nested labels, are also
$\exists$-free. Its typing and formation rules are the cartesian base
rules and the displayed connective and universal formation rules.
\end{definition}

\begin{definition}[The epsilon-only calculus, \coqterm{Epsilon.Hyper_Syntax.h_derivation}]
\label{def:epsilon-only-calculus}
Fix a typed signature $\Sigma$. The calculus $\mathrm{iFOL}_\varepsilon$ over $\mathcal L_\varepsilon$
has assumption, truth, falsity, implication, conjunction, disjunction,
universal introduction and elimination, and formula and context conversion
rules, together with
$\mathsf{epsI}$ and $\mathsf{epsE}$. Its formula conversion is the least equivalence on well-formed formulae
closed under atomic base equivalence, propositional and universal
congruence, identity, composition, and congruence of substitution, the
selected-instance substitution law $(\varepsilon\mathrm{BC})$, and
\[
 \Phi\simeq_{I\times A}\Psi
 \quad\Longrightarrow\quad
 \Phi[\varepsilon_\Phi]\simeq_I\Psi[\varepsilon_\Psi].
\tag{$\varepsilon$-funct}
\]
It has no existential formula constructor, existential proof rule, or
$\textup{(}\varepsilon\Sigma\textup{)}$ conversion.
The resulting derivability judgement is written
$I\mid\Gamma\vdash_\varepsilon p:\Phi$.

We omit the proof term by writing
\[
 I\mid\Gamma\vdash_\varepsilon\Phi
 \quad\Longleftrightarrow\quad
 \exists p\;I\mid\Gamma\vdash_\varepsilon p:\Phi.
\]
(Cf.~\cite{Prawitz1965}).
\end{definition}

\begin{remark}[$\varepsilon$-functoriality without $\exists$, \coqterm{Epsilon.Hyper_Syntax.FE_Eps_Functorial}]
In $\vdash_{\exists\varepsilon}$, $\varepsilon$-functoriality follows from
existential congruence and $\textup{(}\varepsilon\Sigma\textup{)}$. This
argument is unavailable in $\vdash_\varepsilon$, where
$\textup{(}\varepsilon\text{-funct}\textup{)}$ is therefore a conversion
clause. It is the constructor \texttt{FE\_Eps\_Functorial} in Rocq.
\end{remark}

\subsection{Models and the selection condition}

\begin{definition}[Models of the epsilon-only calculus, \coqterm{EpsilonGoodFibration}]
\label{def:epsilon-only-model}
Fix a typed signature $\Sigma$. A thin $\varepsilon$-hyperdoctrine for $\mathrm{iFOL}_\varepsilon$ consists
of a cartesian base model for $\mathcal L_\varepsilon$, a thin
first-order hyperdoctrine $P$ over its base, and raw atomic data satisfying
substitution and base congruence. The base model also interprets its
labelled constants $c_s^A$ with $s$ an epsilon-only label, and its
interpretation is independent of typing derivations up to
$\sim_{\mathcal B}$. Interpret truth, falsity, conjunction, disjunction,
and implication by fibre terminal objects, initial objects, products,
coproducts, and exponentials, and interpret $\forall_A$ by
$\Pi_{\pi_{I,A}}$. Interpret logical contexts by fibre products.
For
$I\times A\vdash\Phi\;\mathsf{form}$ and $I\vdash f:A$, it is equipped with
arrows
\[
 u_\Phi:\sem{\Phi}_{I\times A}
 \longrightarrow
 \sem{\Phi[\varepsilon_\Phi][\pi_1]}_{I\times A},
 \qquad
 i_{\Phi,f}:\sem{\Phi[f]}_I
 \longrightarrow\sem{\Phi[\varepsilon_\Phi]}_I,
\tag{$\varepsilon$-I}
\]
and, for every $I\vdash\Gamma\;\mathsf{ctx}$ and
$I\vdash\Theta\;\mathsf{form}$, an operation
\[
\begin{aligned}
 &\operatorname{Hom}_{P(\sem{I\times A})}
 \bigl(\sem{\Gamma[\pi_1]}_{I\times A}\times\sem{\Phi}_{I\times A},
       \sem{\Theta[\pi_1]}_{I\times A}\bigr)\\
 &\hspace{5em}\longrightarrow
 \operatorname{Hom}_{P(\sem{I})}
 \bigl(\sem{\Gamma}_I\times\sem{\Phi[\varepsilon_\Phi]}_I,
       \sem{\Theta}_I\bigr).
\end{aligned}
\tag{$\varepsilon$-E}
\]
For $I\times A\vdash\Phi\;\mathsf{form}$ and
$I\times A\vdash\Psi\;\mathsf{form}$, it satisfies
$\varepsilon$-functoriality:
\[
 \sem{\Phi}_{I\times A}\cong\sem{\Psi}_{I\times A}
 \Longrightarrow
 \sem{\Phi[\varepsilon_\Phi]}_I
 \cong\sem{\Psi[\varepsilon_\Psi]}_I.
\tag{$\varepsilon$-funct$_\mathrm{s}$}
\]
For $I\vdash g:J$ and $J\times A\vdash\Phi\;\mathsf{form}$, it satisfies
$\varepsilon$-Beck--Chevalley:
\[
 \sem{\Phi[\varepsilon_\Phi][g]}_I
 \cong
 \sem{\Phi[g_A^+][\varepsilon_{\Phi[g_A^+]}]}_I.
\tag{$\varepsilon\mathrm{BC}_\mathrm{s}$}
\]
(Cf.~\cite[pp.~414--415]{Pasquali2016}).
\end{definition}

\begin{proposition}[Semantic substitution in the epsilon-only language, \coqterm{Epsilon_Soundness.form_subst_iso}]
\label{prop:epsilon-only-semantic-substitution}
Fix a typed signature $\Sigma$, a base model of $\mathcal L_\varepsilon$
whose interpretation is independent of typing derivations, a thin
first-order hyperdoctrine $P$ over its base, and raw atomic data satisfying
\textup{(At-subst)} and \textup{(At-cong)}. If $I\vdash f:J$ and $\Phi$
is a raw formula of $\mathcal L_\varepsilon$, then
\[
 \sem{\Phi[f]}_I\cong P(\sem{f})(\sem{\Phi}_J).
\]
\end{proposition}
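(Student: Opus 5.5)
We argue by structural induction on the raw formula $\Phi$ of $\mathcal L_\varepsilon$, following the proof of Proposition~\ref{prop:semantic-substitution}. The epsilon-only language has no existential constructor, so the cases are $\top$, $\bot$, atoms, the three binary connectives, and $\forall_A$. Epsilon terms occur only inside base terms, and those are absorbed by the atomic case. The claim is proved uniformly in $I$, $J$ and the typed $f$, since the quantifier case changes the base context and the substitution.

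\emph{Base cases.} For $\top$ and $\bot$ we have $\top[f]=\top$ and $\bot[f]=\bot$. Item~(3) of Definition~\ref{def:goodfibration} gives $P(\sem f)(\top_J)\cong\top_I$ and $P(\sem f)(\bot_J)\cong\bot_I$. For an atom, $R(t)[f]=R(t\circ f)$, and its interpretation is $\mathcal A_I(R,t\circ f)$. By (At-subst) this is isomorphic to $f^*\mathcal A_J(R,t)$. (At-subst) is stated for raw terms, so no typing of $t$ is needed, and labelled constants $c_s^A$ inside $t$ are covered.

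\emph{Binary connectives.} Take $\star\in\{\land,\lor,\to\}$. The induction hypotheses give $\sem{\Phi[f]}\cong f^*\sem\Phi$ and $\sem{\Psi[f]}\cong f^*\sem\Psi$. We combine them using functoriality of $\times$, $+$ and exponentials in the fibre $P(\sem I)$; exponentials are contravariant in the base, but that is harmless for isomorphisms. We then apply the preservation isomorphisms of item~(3), e.g.\ $f^*(\sem\Psi^{\sem\Phi})\cong (f^*\sem\Psi)^{f^*\sem\Phi}$. Because the fibres are thin, an isomorphism only requires arrows in both directions, so no coherence bookkeeping is needed.

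\emph{Universal quantifier.} This is the main step. We have $(\forall_A\Phi)[f]=\forall_A(\Phi[f_A^+])$ with $I\times A\vdash f_A^+:J\times A$. The induction hypothesis at $f_A^+$ gives $\sem{\Phi[f_A^+]}_{I\times A}\cong P(\sem{f_A^+})\sem\Phi_{J\times A}$. The subtle point is that $\sem{f_A^+}=\langle\sem f\circ\pi_1,\pi_2\rangle$ must be identified with the semantic lift $(\sem f)^+_{\sem A}$ used in (BC). Both are built from the same base operations, so they agree up to $\sim_{\mathcal B}$; this uses the derivation-independence assumption on the base interpretation. (PB-cong) then transfers the identification to the pullback functors. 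After that we apply $\Pi_{\pi_{I,A}}$, which preserves isomorphisms because it is a functor (a right adjoint), and finish with the $\Pi$ half of (BC):
\[
\Pi_{\pi_{I,A}}P((\sem f)^+)\sem\Phi\cong P(\sem f)\,\Pi_{\pi_{J,A}}\sem\Phi .
\]
Raw formulae that are not well formed fall under the same clauses, because the interpretation is defined by structural recursion on raw syntax and the atomic data are total. So no well-formedness hypothesis on $\Phi$ is needed.
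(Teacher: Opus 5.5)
Your proof is correct and follows essentially the same route as the paper. Both argue by structural induction on $\Phi$: (At-subst) handles atoms, the preservation isomorphisms of reindexing handle $\top,\bot,\land,\lor,\to$, and the $\forall_A$ case combines the induction hypothesis at the lifted substitution $f_A^+$ (matched to the semantic lift via derivation-independence and (PB-cong)) with the $\Pi$-half of Beck--Chevalley. You add details the paper leaves implicit, such as stating the claim uniformly in $I,J,f$ and noting that there is no $\exists$-case.
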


\begin{proof}
Induct on $\Phi$. Atomic substitution gives the atomic case; reindexing
preserves the bicartesian closed operations. The universal case uses
Beck--Chevalley and the lifted base substitution.
\end{proof}

\begin{corollary}[The model class of $\mathrm{iFOL}_\varepsilon$, \coqterm{Epsilon_SelectionModels.EpsilonGoodFibration_from_chosen_units}]
\label{cor:epsilon-only-model-class}
Fix a typed signature $\Sigma$, a base model
$(\mathcal B,S_{\mathcal B},\sem{-})$ of $\mathcal L_\varepsilon$ whose
interpretation is independent of typing derivations, a thin first-order
hyperdoctrine $P$ over $\mathcal B$, and raw atomic data satisfying
substitution and base congruence. For every well-formed
$I\times A\vdash\Phi\;\mathsf{form}$, put
$Q_\Phi=\sem{\Phi}$,
$s_\Phi=\langle\mathrm{id},\sem{\varepsilon_\Phi}\rangle$, and
$E_\Phi=P(s_\Phi)Q_\Phi\cong\sem{\Phi[\varepsilon_\Phi]}$.
A chosen family of unit arrows
$Q_\Phi\to P(\pi_{I,A})E_\Phi$ supplies all five laws of an
$\mathrm{iFOL}_\varepsilon$-model: unit, introduction, contextual
elimination, functoriality, and Beck--Chevalley. Conversely, every such
model supplies the unit arrows. These conditions range over represented
well-formed formulae of $\mathcal L_\varepsilon$; no equality or naturality
of the witnesses $\sem{\varepsilon_\Phi}$ is required.
\end{corollary}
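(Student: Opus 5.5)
The forward direction is a construction from a chosen unit family; I would carry it out law by law. Fix the units $u_\Phi:Q_\Phi\to P(\pi_{I,A})E_\Phi$. The first step uses the semantic substitution lemma, Proposition~\ref{prop:epsilon-only-semantic-substitution}, together with (PB-cong) and $\pi_1\circ\langle\mathrm{id},\varepsilon_\Phi\rangle\equiv\mathrm{id}$. It gives
\[
 P(\pi_{I,A})E_\Phi\cong\sem{\Phi[\varepsilon_\Phi][\pi_1]}_{I\times A},
\]
so composing $u_\Phi$ with this isomorphism yields the unit arrow required in $(\varepsilon$-I$)$.

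For the introduction arrow $i_{\Phi,f}$, I would reindex $u_\Phi$ along $\langle\mathrm{id},\sem f\rangle$. Since $\pi\circ\langle\mathrm{id},\sem f\rangle\sim\mathrm{id}$, and using the functoriality isomorphisms of $P$ and (PB-cong), this gives an arrow
\[
 \sem{\Phi[f]}\cong P(\langle\mathrm{id},\sem f\rangle)Q_\Phi\longrightarrow E_\Phi\cong\sem{\Phi[\varepsilon_\Phi]}.
\]

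For contextual elimination, I would follow the argument of Proposition~\ref{prop:selection-characterization}. Given $h:P(\pi)\sem\Gamma\times Q_\Phi\to P(\pi)\sem\Theta$, I reindex along $s_\Phi$. Using $P(s_\Phi)P(\pi)X\cong X$ (the weakening remark), preservation of products by $P(s_\Phi)$, and $P(s_\Phi)Q_\Phi=E_\Phi$, this produces $\sem\Gamma\times E_\Phi\to\sem\Theta$. Notably this step does not even use the unit.

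The remaining two laws are where the unit is really needed, and I expect them to be the main obstacle. The point is that, although no $\Sigma_\pi$ exists in the syntax, the semantic hyperdoctrine still has $\Sigma_\pi$. Proposition~\ref{prop:selection-characterization}, direction (1)$\Rightarrow$(2), turns each unit into an isomorphism $\Sigma_\pi Q_\Phi\cong E_\Phi$, using thinness. Then:
\begin{itemize}
\item[$\bullet$] $\varepsilon$-functoriality follows because $Q_\Phi\cong Q_\Psi$ implies $\Sigma_\pi Q_\Phi\cong\Sigma_\pi Q_\Psi$, hence $E_\Phi\cong E_\Psi$.
\item[$\bullet$] $\varepsilon$-Beck--Chevalley follows exactly as in the $\varepsilon$BC case of Proposition~\ref{prop:semantic-formula-conversion}. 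The chain runs through $\Sigma_{\pi_I}(g_A^+)^*Q_\Phi\cong g^*\Sigma_{\pi_J}Q_\Phi$ using (BC), with (Subst) applied at both ends.
\end{itemize}
The care needed is bookkeeping. Each epsilon-only formula $\Phi[g_A^+]$ is again well formed, so it has its own unit. The semantic substitution lemma for $\mathcal L_\varepsilon$ must be applied to raw formulae such as $\Phi[\varepsilon_\Phi][g]$. Everything else is transport along isomorphisms, which is harmless in thin fibres.

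The converse is short. Given an $\mathrm{iFOL}_\varepsilon$-model, I would take the $(\varepsilon$-I$)$ arrow $u_\Phi:\sem\Phi\to\sem{\Phi[\varepsilon_\Phi][\pi_1]}$ and compose it with the inverse of the isomorphism from the first step, giving $Q_\Phi\to P(\pi)E_\Phi$. Finally, no naturality of $\sem{\varepsilon_\Phi}$ is ever invoked, since all comparisons are made at the level of fibre objects, and the quantification throughout is only over well-formed $\Phi$.
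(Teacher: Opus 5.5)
Your proposal is correct and follows the paper's proof essentially step by step. You obtain introduction by reindexing the unit along $\langle\mathrm{id},\sem{f}\rangle$, and elimination by reindexing the premise along $s_\Phi$, which indeed needs no unit. Functoriality and Beck--Chevalley come from the isomorphism $\Sigma_\pi Q_\Phi\cong E_\Phi$ supplied by Proposition~\ref{prop:selection-characterization}, and the converse is read off from the unit component, all exactly as in the paper. You are in fact slightly more explicit than the paper about the unit law itself; there only (Subst) and $E_\Phi\cong\sem{\Phi[\varepsilon_\Phi]}$ are needed, so the section equation you cite is harmless but unnecessary.
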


\begin{proof}
Reindex the unit along
$\langle\mathrm{id},\sem{f}\rangle$ to obtain introduction for any
$I\vdash f:A$. Reindex an elimination premise along $s_\Phi$;
preservation of products and $\pi s_\Phi\sim\mathrm{id}$ give the
contextual elimination arrow. The chosen unit gives $\Sigma_\pi Q_\Phi\cong E_\Phi$ by
the selection characterization. This isomorphism gives functoriality through $\Sigma_\pi$.
The $\Sigma$ Beck--Chevalley isomorphism and semantic substitution give
\[
 P(\sem{g})E_\Phi
 \cong P(\sem{g})\Sigma_\pi Q_\Phi
 \cong\Sigma_{\pi'}Q_{\Phi[g_A^+]}
 \cong E_{\Phi[g_A^+]}
\]
for every $I\vdash g:J$ and $J\times A\vdash\Phi\;\mathsf{form}$,
where $\pi=\pi_{J,A}$ and $\pi'=\pi_{I,A}$ are the projections over $\sem{J}$ and $\sem{I}$, respectively.
Well-formed formulae are closed under these typed substitutions.
The converse simply takes the unit component of the model.
\end{proof}

\begin{remark}[The scope of the model characterization, \coqterm{CanonicalEpsilonComparison.EpsilonSign}, \coqterm{CanonicalEpsilonComparison.EpsilonFullSign}]
The two epsilon calculi have different raw formula and label types.
The preceding characterization describes the required structure on an
interpretation of each language. It supplies no equivalence between their
model categories.
\end{remark}

\begin{notation}[Validity for the epsilon-only calculus, \coqterm{epsilon_completeness_inhabitational_gen}]
Fix a typed signature $\Sigma$ and a thin $\varepsilon$-hyperdoctrine for $\mathrm{iFOL}_\varepsilon$ $\mathcal M$
interpreting $\mathcal L_\varepsilon$. For a well-formed context
$I\vdash\Gamma\;\mathsf{ctx}$ and formula
$I\vdash\Phi\;\mathsf{form}$ in this language, write
\[
 I\mid\Gamma\models_{\varepsilon}^{\mathcal M}\Phi
 \quad\Longleftrightarrow\quad
 \operatorname{Hom}_{P_{\mathcal M}(\sem{I}_{\mathcal M})}
 \bigl(\sem{\Gamma}_{\mathcal M},\sem{\Phi}_{\mathcal M}\bigr)
 \text{ is inhabited}.
\]
\end{notation}

\begin{proposition}[Selected-instance adjunction for the epsilon-only calculus, \coqterm{pasquali_has_enough_epsilon_terms_semantic}]
\label{prop:epsilon-only-selected-instance-adjunction}
Let $\mathcal M$ be a thin $\varepsilon$-hyperdoctrine
for $\mathrm{iFOL}_\varepsilon$ over $\Sigma$. For
$I\times A\vdash\Phi\;\mathsf{form}$ and
$I\vdash\Psi\;\mathsf{form}$,
\[
\begin{aligned}
 &\operatorname{Hom}_{P_{\mathcal M}(\sem{I})}
   \bigl(\sem{\Phi[\varepsilon_\Phi]}_I,\sem{\Psi}_I\bigr)
   \text{ is inhabited}\\
 &\qquad\Longleftrightarrow\quad
 \operatorname{Hom}_{P_{\mathcal M}(\sem{I\times A})}
   \bigl(\sem{\Phi}_{I\times A},\sem{\Psi[\pi_1]}_{I\times A}\bigr)
   \text{ is inhabited}.
\end{aligned}
\]
(Cf.~\cite[pp.~414--415]{Pasquali2016}, \cite[p.~452]{Pasquali2019}).
\end{proposition}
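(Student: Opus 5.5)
The proof proceeds without any existential quantifier, since $\mathrm{iFOL}_\varepsilon$ models need not carry $\Sigma_\pi$ in a way tied to the language. We work directly with the structure of Definition~\ref{def:epsilon-only-model}. The hyperdoctrine $P$ underlying $\mathcal M$ is still a first-order hyperdoctrine, so the left adjoint $\Sigma_{\pi_{I,A}}\dashv P(\pi_{I,A})$ exists semantically. The cleanest route is to show that $\sem{\Phi[\varepsilon_\Phi]}_I$ satisfies condition (1) of Proposition~\ref{prop:selection-characterization}, and then read off condition (3).

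\textbf{Step 1.} Set $Q=\sem{\Phi}_{I\times A}$, $s=s_\Phi=\langle\mathrm{id},\sem{\varepsilon_\Phi}\rangle$ and $E=P(s)Q$. By Proposition~\ref{prop:epsilon-only-semantic-substitution} together with (PB-cong), as in Proposition~\ref{prop:selected-instance}, we have $E\cong\sem{\Phi[\varepsilon_\Phi]}_I$. The same proposition gives
\[
 \sem{\Phi[\varepsilon_\Phi][\pi_1]}_{I\times A}\cong P(\pi_{I,A})\sem{\Phi[\varepsilon_\Phi]}_I\cong P(\pi_{I,A})E .
\]
Composing the unit arrow $u_\Phi$ of $(\varepsilon$-I$)$ with these isomorphisms yields a fibre arrow $Q\to P(\pi_{I,A})E$. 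This is condition (1).

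\textbf{Step 2.} Proposition~\ref{prop:selection-characterization} now gives condition (3) for every $R\in P(\sem{I})$: $\operatorname{Hom}(E,R)$ is inhabited iff $\operatorname{Hom}(Q,P(\pi)R)$ is inhabited. Take $R=\sem{\Psi}_I$. By Proposition~\ref{prop:epsilon-only-semantic-substitution}, $P(\pi)\sem{\Psi}_I\cong\sem{\Psi[\pi_1]}_{I\times A}$. Transporting both hom-sets along the isomorphisms of Step 1 then gives the displayed equivalence.

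\textbf{Fallback without Proposition~\ref{prop:selection-characterization}.} Each direction can also be built by hand. For ($\Leftarrow$), apply the elimination operation $(\varepsilon$-E$)$ with $\Gamma=\varnothing$ and $\Theta=\Psi$. Since $\sem{\varnothing[\pi_1]}=\top$, an arrow $\sem{\Phi}\to\sem{\Psi[\pi_1]}$ precomposed with the product projection becomes an arrow $\top\times\sem{\Phi}\to\sem{\Psi[\pi_1]}$. The operation sends it to an arrow $\top\times\sem{\Phi[\varepsilon_\Phi]}\to\sem{\Psi}$, and precomposing with $\langle !,\mathrm{id}\rangle$ finishes. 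For ($\Rightarrow$), given $h:\sem{\Phi[\varepsilon_\Phi]}\to\sem{\Psi}$, form the composite
\[
 \sem{\Phi}\xrightarrow{u_\Phi}\sem{\Phi[\varepsilon_\Phi][\pi_1]}\cong P(\pi)\sem{\Phi[\varepsilon_\Phi]}\xrightarrow{P(\pi)h}P(\pi)\sem{\Psi}\cong\sem{\Psi[\pi_1]}.
\]

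\textbf{Main obstacle.} The only delicate point is bookkeeping of the isomorphisms: the raw formulae $\Phi[\varepsilon_\Phi][\pi_1]$ and $\Psi[\pi_1]$ must be identified with the semantic reindexings, which needs the substitution lemma. Thinness makes every coherence question trivial, so only inhabitation matters.
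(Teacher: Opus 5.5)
Your proposal is correct and follows essentially the paper's route: the paper invokes the selection isomorphism $\Sigma_\pi\sem{\Phi}\cong\sem{\Phi[\varepsilon_\Phi]}$, which for $\mathrm{iFOL}_\varepsilon$-models is obtained from the unit $u_\Phi$ via the selection characterization exactly as in your Step~1, and then applies $\Sigma_\pi\dashv P(\pi)$ at $\sem{\Psi}$ together with $P(\pi)\sem{\Psi}\cong\sem{\Psi[\pi_1]}$, as in your Step~2. Your hand-built fallback is also valid, and it has the small advantage of using only $u_\Phi$ and $(\varepsilon\text{-E})$, without invoking $\Sigma_\pi$ at all.
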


\begin{proof}
For $\mathrm{iFOL}_\varepsilon$, the model supplies a selection isomorphism
$\Sigma_\pi\sem{\Phi}\cong\sem{\Phi[\varepsilon_\Phi]}$.
Apply the adjunction $\Sigma_\pi\dashv P(\pi)$ to $\sem{\Psi}$,
and use $P(\pi)\sem{\Psi}\cong\sem{\Psi[\pi_1]}$.
\end{proof}

\subsection{The term model, soundness, and completeness}

\begin{proposition}[The term model of $\mathrm{iFOL}_\varepsilon$, \coqterm{Epsilon_Completeness.TermModel_EpsilonGoodFibration}]
\label{prop:ifol-epsilon-term-model}
Fix a typed function--predicate signature $\Sigma$. For $\mathrm{iFOL}_\varepsilon$, take its base types and typed terms as the syntactic base, well-formed formulae as fibre objects, and closed implication proofs as fibre arrows; identify all parallel fibre arrows. Use syntactic substitution for reindexing and interpret well-typed syntax by itself, with ill-typed raw atoms sent to $\top$.
Write $\mathcal T_\varepsilon$ for the resulting model and $P_\varepsilon$ for its
fibration. Then $\mathcal T_\varepsilon$ is a thin $\varepsilon$-hyperdoctrine for $\mathrm{iFOL}_\varepsilon$.
For base types $I,A$ and a well-formed formula
$I\times A\vdash\Phi\;\mathsf{form}$ of $\mathcal L_\varepsilon$, with
$\pi_{I,A}:I\times A\to I$, its quantifiers satisfy
\[
 \Sigma_{\pi_{I,A}}(\Phi):=\Phi[\varepsilon_\Phi],
 \qquad \Pi_{\pi_{I,A}}(\Phi):=\forall_A\Phi,
 \qquad \Sigma_{\pi_{I,A}}\dashv P_\varepsilon(\pi_{I,A})\dashv\Pi_{\pi_{I,A}}.
\]
Base types and well-formed formulae interpret as themselves, and
$\sem{\Gamma}_{\mathcal T_\varepsilon}=\Gamma^\times$ for every well-formed context.
Here the context formula is defined by
$\varnothing^\times=\top$ and $(\Phi,\Gamma)^\times=\Gamma^\times\land\Phi$.
(Cf.~\cite[pp.~234--235]{Jacobs1999}).
\end{proposition}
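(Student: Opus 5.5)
The plan is to verify the components of Definition~\ref{def:epsilon-only-model} one by one for $\mathcal T_\varepsilon$, following the proofs of Propositions~\ref{prop:syntactic-term-model} and \ref{prop:ifol-term-model}. The notable difference is that existential quantification is realized by $\Phi\mapsto\Phi[\varepsilon_\Phi]$.

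\textbf{Base, fibres and interpretation.} The base is $\mathcal B_{\mathrm{syn}}$ as in Proposition~\ref{prop:syntactic-base-category}, restricted to epsilon-only labels. It is a base model by the argument of Proposition~\ref{prop:syntactic-interpretation-identity}. Each fibre $P_\varepsilon(I)$ is thin by definition. Its BiCCC structure comes from the propositional rules:
\begin{itemize}
\item identity and composition are the closed terms $\lambda x.x$ and $\lambda f.\lambda g.\lambda x.f(gx)$;
\item products, coproducts and exponentials are $\land,\lor,\to$, with the usual pairing, case and currying terms;
\item $\top,\bot$ use $\mathsf{tt}$ and $\mathsf{abort}$.
\end{itemize}
Reindexing is syntactic substitution. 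Its action on arrows comes from the $\mathrm{iFOL}_\varepsilon$ analogue of Proposition~\ref{prop:derivation-substitution}, proved by induction on derivations. In the $\mathsf{epsI}/\mathsf{epsE}$ cases the conclusion is retyped using $(\varepsilon\mathrm{BC})$, exactly as in the full calculus. The functor isomorphisms (identity, composition, preservation of the connectives, PB-cong) are conversions, turned into arrows by the conversion rule applied to $\lambda x.x$.

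Universal quantification is $\forall_A$. The adjunction $P(\pi)\dashv\Pi_\pi$ uses $\mathsf{allI}$ in one direction. In the other it uses $\mathsf{allE}(p,\pi_2)$ after substituting along $\pi_1$, with the fiber-trip conversion (P) of Proposition~\ref{prop:diagonal-calculation}. Beck--Chevalley for $\Pi$ is the definitional clause $(\forall_A\Phi)[f]=\forall_A(\Phi[f_A^+])$. Atoms are interpreted by themselves, and ill-typed raw atoms by $\top$, so (At-subst) and (At-cong) hold. Induction on $I$, $\Phi$ and $\Gamma$ gives $\sem{I}=I$, $\sem{\Phi}=\Phi$ and $\sem\Gamma=\Gamma^\times$, as in Remark~\ref{rem:term-model-interpretation}.

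\textbf{The $\varepsilon$-data.} I would build the structure from unit arrows and then invoke Corollary~\ref{cor:epsilon-only-model-class}, which supplies introduction, contextual elimination, functoriality and Beck--Chevalley. The unit $u_\Phi:\Phi\to\Phi[\varepsilon_\Phi][\pi_1]$ over $J=I\times A$ is $\lambda u.\mathsf{epsI}(\pi_2^J,u)$. Its typing mirrors the proof of Proposition~\ref{prop:epsilon-existence}: the premise is retyped by (P), so that $\Phi\simeq\Phi[\rho_A^+][\pi_2^J]$, and the conclusion by $(\varepsilon\mathrm{BC})$, so that $\Phi[\rho_A^+][\varepsilon_{\Phi[\rho_A^+]}]\simeq\Phi[\varepsilon_\Phi][\pi_1^J]$.

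One could instead give the operations directly: $i_{\Phi,f}=\lambda z.\mathsf{epsI}(f,z)$, and $(\varepsilon$-E) via $\mathsf{epsE}$ combined with derivation extraction, Proposition~\ref{prop:term-model-extraction}. $(\varepsilon$-funct$_\mathrm s)$ then follows from the conversion clause $(\varepsilon$-funct), and $(\varepsilon\mathrm{BC}_\mathrm s)$ from $(\varepsilon\mathrm{BC})$. This matters because in the thin term model an isomorphism $\Phi\cong\Psi$ is only mutual derivability, not conversion. So functoriality really has to go through $\Sigma$, as the Corollary does.

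\textbf{The displayed adjunction.} Setting $\Sigma_\pi(\Phi):=\Phi[\varepsilon_\Phi]$, it remains to show that $\Phi[\varepsilon_\Phi]\to\Psi$ is derivable iff $\Phi\to\Psi[\pi_1]$ is.
\begin{itemize}
\item Left to right: substitute along $\pi_1$, then precompose with $u_\Phi$.
\item Right to left: apply $\mathsf{epsE}$ to the hypothesis $z:\Phi[\varepsilon_\Phi]$, with the premise obtained by weakening.
\end{itemize}
Thinness turns this into an adjunction. $\Sigma_\pi$ is functorial on arrows by the same adjunction argument.

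\textbf{Main obstacle.} The delicate step is the substitution lemma for derivations together with the typing of $u_\Phi$. Both rely on the exact syntactic form of $(\varepsilon\mathrm{BC})$ and on conversion stability, the analogue of Proposition~\ref{prop:conversion-stability}, which must now be reproved with the extra clause $(\varepsilon$-funct). For that clause, stability under substitution needs $(\varepsilon\mathrm{BC})$ on both sides together with the congruence $\Phi[f_A^+]\simeq\Psi[f_A^+]$ obtained by induction. I would prove this first.
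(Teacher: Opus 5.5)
Your proposal is correct. Its core matches the paper's proof: the unit $\lambda u.\mathsf{epsI}(\pi_2,u)$ typed via (P) and $(\varepsilon\mathrm{BC})$, the transpose $\lambda z.\mathsf{epsE}(z,u.\,p\,u)$, thinness for the adjunction equations, and the interpretation identities by induction.

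Where you differ is in how the fields of Definition~\ref{def:epsilon-only-model} are supplied. The paper reads $(\varepsilon\text{-I})$ and $(\varepsilon\text{-E})$ directly off the general $\mathsf{epsI}$ rule and the context-parametrized $\mathsf{epsE}$ rule. It takes the two comparison isomorphisms from the conversion clauses $(\varepsilon\text{-funct})$ and $(\varepsilon\mathrm{BC})$. You instead derive all four fields from the single unit arrow via Corollary~\ref{cor:epsilon-only-model-class}.

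Your route gains something real. The premise of $(\varepsilon\text{-funct}_{\mathrm s})$, as displayed, is a semantic isomorphism. In $\mathcal T_\varepsilon$ that means only closed mutual derivability, not $\Phi\simeq\Psi$, so the conversion clause does not literally apply. Functoriality of $\Sigma_\pi$, obtained from the adjunction, does apply. The paper's route is more direct: it matches each model field to the syntactic rule of the same name, mirroring the soundness proof.

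Two bookkeeping points for your version:
\begin{itemize}
\item The Corollary presupposes a thin first-order hyperdoctrine. So the $\Sigma$-adjunction of your last paragraph must be established before the Corollary is invoked. There is no circularity, since that adjunction uses only $u_\Phi$, substitution along $\pi_1$, and $\mathsf{epsE}$.
\item You should explicitly record $\Sigma$-Beck--Chevalley, $\Phi[\varepsilon_\Phi][f]\cong\Phi[f_A^+][\varepsilon_{\Phi[f_A^+]}]$, as part of the hyperdoctrine structure. It is exactly the conversion $(\varepsilon\mathrm{BC})$, and the Corollary's proof of $(\varepsilon\mathrm{BC}_{\mathrm s})$ uses it.
\end{itemize}
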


\begin{proof}
The shared rules give the cartesian base, bicartesian closed fibres,
syntactic substitution, and universal quantification.
For $\mathrm{iFOL}_\varepsilon$, its unit is represented by
\[
 I\times A\mid\varnothing\vdash_\varepsilon
 \lambda u.\,\mathsf{epsI}(\pi_2,u):
 \Phi\to\Phi[\varepsilon_\Phi][\pi_1].
\]
For $I\vdash\Psi\;\mathsf{form}$ and
$I\times A\mid\varnothing\vdash_\varepsilon
p:\Phi\to\Psi[\pi_1]$, its adjunct transpose is represented by
\[
 I\mid\varnothing\vdash_\varepsilon
 \lambda z.\,\mathsf{epsE}(z,u.\,p\,u):
 \Phi[\varepsilon_\Phi]\to\Psi.
\]
The unit and transposition exhibit the left adjoint, and thinness gives
its adjunction equations. General $\varepsilon$-introduction and
context-parametrized $\varepsilon$-elimination supply the operations of
Definition~\ref{def:epsilon-only-model}.
The clauses $(\varepsilon\text{-funct})$ and $(\varepsilon\mathrm{BC})$
give the comparison isomorphisms. The interpretation identities follow by induction on types, well-formed formulae, and contexts.
\end{proof}

\begin{theorem}[Soundness and completeness of $\mathrm{iFOL}_\varepsilon$, \coqterm{CanonicalEpsilonSemantics.soundness}, \coqterm{CanonicalEpsilonSemantics.completeness}]
\label{thm:epsilon-only-soundness-completeness}
Fix a typed signature $\Sigma$, a well-formed context
$I\vdash\Gamma\;\mathsf{ctx}$ and a well-formed formula
$I\vdash\Phi\;\mathsf{form}$ in $\mathcal L_\varepsilon$. Then
\[
 I\mid\Gamma\vdash_{\varepsilon}\Phi
 \quad\Longleftrightarrow\quad
 \bigl(\forall\mathcal M,\quad
 I\mid\Gamma\models_{\varepsilon}^{\mathcal M}\Phi\bigr),
\]
where $\mathcal M$ ranges over all thin $\varepsilon$-hyperdoctrines for $\mathrm{iFOL}_\varepsilon$ interpreting $\mathcal L_\varepsilon$ over $\Sigma$.
(Cf.~\cite[pp.~234--235, 247--248]{Jacobs1999}).
\end{theorem}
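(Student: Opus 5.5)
The proof has two halves, mirroring Theorem~\ref{thm:soundness} and Theorem~\ref{thm:ifol-soundness-completeness}.

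\emph{Soundness.} I would first interpret each derivation $I\mid\Gamma\vdash_\varepsilon p:\Phi$ as a fibre arrow $\sem{\Gamma}_I\to\sem{\Phi}_I$ in an arbitrary model $\mathcal M$, by induction on the derivation. The propositional rules use the bicartesian closed structure of the fibres, and the universal rules use $P(\pi)\dashv\Pi_\pi$. In $\forall$-elimination, Proposition~\ref{prop:epsilon-only-semantic-substitution} and (PB-cong) identify $\sem{\Phi[a]}$ with the reindexing along $\langle\mathrm{id},\sem{a}\rangle$. The $\mathsf{epsI}$ case composes the premise arrow $\sem{\Gamma}\to\sem{\Phi[a]}$ with $i_{\Phi,a}$ from ($\varepsilon$-I). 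The $\mathsf{epsE}$ case takes the premise over $I\times A$, rewritten via Proposition~\ref{prop:epsilon-only-semantic-substitution} as an arrow $\sem{\Gamma[\pi_1]}\times\sem{\Phi}\to\sem{\Theta[\pi_1]}$. It applies the operation ($\varepsilon$-E) and precomposes with $\langle\mathrm{id},q\rangle$, where $q:\sem{\Gamma}\to\sem{\Phi[\varepsilon_\Phi]}$ interprets the major premise. The conversion rules require a lemma analogous to Proposition~\ref{prop:semantic-formula-conversion}: $\Phi\simeq_I\Psi$ implies $\sem{\Phi}\cong\sem{\Psi}$. I would prove it by induction on conversion. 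The atomic clause uses (At-cong), the substitution clauses use the semantic substitution lemma with (PB-cong) and functoriality of $P$, the ($\varepsilon\mathrm{BC}$) clause is exactly ($\varepsilon\mathrm{BC}_\mathrm{s}$), and ($\varepsilon$-funct) is handled by applying the induction hypothesis and then ($\varepsilon$-funct$_\mathrm{s}$). Context conversion then follows componentwise, and thinness removes any coherence concerns. Validity follows, since inhabitation is all that is required.

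\emph{Completeness.} Assume validity in every model. Specialise it to the term model $\mathcal T_\varepsilon$, which Proposition~\ref{prop:ifol-epsilon-term-model} shows is a model. Its interpretation identities, $\sem{\Phi}=\Phi$ and $\sem{\Gamma}=\Gamma^\times$, turn validity into inhabitation of $\operatorname{Hom}_{P_\varepsilon(I)}(\Gamma^\times,\Phi)$. Proposition~\ref{prop:term-model-extraction} then applies, since $\mathrm{iFOL}_\varepsilon$ has the assumption, weakening, $\top$, $\land$-introduction and $\to$-elimination rules. It yields $I\mid\Gamma\vdash_\varepsilon\Phi$.

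\emph{Main obstacle.} The delicate step is the $\mathsf{epsE}$ soundness case together with the conversion lemma. The issue is the isomorphisms between $\sem{\Gamma[\pi_1]}$ and $P(\pi)\sem{\Gamma}$, and between $\sem{\Theta[\pi_1]}$ and $P(\pi)\sem{\Theta}$, all taken up to fibre isomorphism. These must be matched carefully against the exact shape of ($\varepsilon$-E), which is stated syntactically with $\Gamma[\pi_1]$ and $\Theta[\pi_1]$; this matching is what keeps the case manageable. I would keep everything at that syntactic level and invoke Proposition~\ref{prop:epsilon-only-semantic-substitution} only where the premise typing forces it. On the completeness side, the substantive work has already been done in Proposition~\ref{prop:ifol-epsilon-term-model}, so the argument is routine.
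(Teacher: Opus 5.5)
Your proposal is correct and follows the paper's proof. Soundness is by induction on derivations, with $\mathsf{epsI}$ and $\mathsf{epsE}$ interpreted by $(\varepsilon\text{-I})$ and $(\varepsilon\text{-E})$ and the two extra conversion clauses by $(\varepsilon\text{-funct}_{\mathrm s})$ and $(\varepsilon\mathrm{BC}_{\mathrm s})$; completeness applies validity to $\mathcal T_\varepsilon$ and then uses derivation extraction. One small simplification: the $\mathsf{epsE}$ minor premise over the context $u:\Phi,\Gamma[\pi_1]$ is already interpreted as an arrow $\sem{\Gamma[\pi_1]}\times\sem{\Phi}\to\sem{\Psi[\pi_1]}$, exactly the domain of $(\varepsilon\text{-E})$, so the semantic substitution lemma is not needed there.
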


\begin{proof}
Soundness is by induction on derivations. For $\vdash_\varepsilon$, the two
$\varepsilon$-rules are interpreted by $\textup{(}\varepsilon\text{-I)}$
and $\textup{(}\varepsilon\text{-E)}$, while the two additional conversion
clauses use $\textup{(}\varepsilon\text{-funct}_{\mathrm{s}}\textup{)}$ and
$\textup{(}\varepsilon\mathrm{BC}_{\mathrm{s}}\textup{)}$. The universal rules are interpreted by $P(\pi)\dashv\Pi_\pi$.

For completeness, by
Proposition~\ref{prop:ifol-epsilon-term-model}, the semantic assumption may be applied
to $\mathcal T_\varepsilon$. Writing $P_{\mathcal T_\varepsilon}$ for its fibration, it
gives
$\operatorname{Hom}_{P_{\mathcal T_\varepsilon}(I)}
(\Gamma^\times,\Phi)$ inhabited.
Proposition~\ref{prop:term-model-extraction} yields
$I\mid\Gamma\vdash_\varepsilon\Phi$.
\end{proof}

\end{appendices}

\bibliography{references}

\end{document}